\newtheorem{theorem}{Theorem}[section]
\newtheorem{lemma}[theorem]{Lemma}
\newtheorem{corollary}[theorem]{Corollary}
\theoremstyle{definition}
\theoremstyle{remark}
\newtheorem{remark}[theorem]{Remark}
\numberwithin{equation}{section}
\newcommand{\abs}[1]{\lvert#1\rvert}
\begin{document}

\title{\bf On the Ohno-Nakagawa Theorem}

\author{Xia Gao}
\address{School of Mathematics, Peking University, Beijing 10087, China PRC}
\email{xia@math.pku.edu.cn}

\subjclass[2010]{}

\date{November 30, 2015}


\keywords{order, conductor, $L$-function, binary cubic form}

\begin{abstract}
 In this paper we give a new proof of the Ohno-Nakagawa Theorem  using
  the techniques of $L$-series.
 By applying Eisenstein's parametrization of binary cubic forms on the one hand,
  and a class field theory interpretation of  Datskovsky \& Wright's Theorem on the other,
  we  reduce the Ohno-Nakagawa Theorem  to an identity involving
  the $L$-series and the truncated $L$-series  of quadratic orders.
 We prove this identity  by establishing a general relation
  among these two types $L$-series.
\end{abstract}

\maketitle

\section{Introduction}

 Let $L$ denote the lattice of integral binary cubic forms\,:
 \begin{eqnarray*}
   L = \{ \, x(u,v) = x_0 u^3 + x_1 u^2 v + x_2 u v^2 + x_3 v^3 \,| \,
   \, x_i \in {\mathbb Z}\, , \, 0 \leq i \leq 3 \,\}\, .
 \end{eqnarray*}
 Put $ \Gamma = SL(2,{\mathbb Z} )\,$. The group $ \Gamma  $ acts on
 $L$ by the formula
 \begin{eqnarray*}
  ( \gamma x )(u,v) = x( (u,v) \gamma ) \, , \quad  \forall \,
  \,\gamma \in \Gamma , \, x \in L \, .
 \end{eqnarray*}
 Let $ \mbox{disc} (x) $ denote the  discriminant of  $ x(u,v) \,$,
  i.e.\,,
 \begin{eqnarray*}
 \mbox{disc}(x)
  = x_1^2 x_2^2 + 18 x_0 x_1 x_2 x_3
    - 4 x_0 x_2^3 - 4 x_1^3 x_3 - 27 x_0^2 x_3^2  \,.
 \end{eqnarray*}
 It is  invariant under  the action of $\,\Gamma \, $.
 For $x \in L \,$, let $\Gamma_x $ denote the isotropic subgroup of
 $x$ in $\Gamma  $. It is known that $ |\, \Gamma_x | $ is $1$ if
 $\mbox{disc}(x) < 0 \, $ and is $1$ or $3$ if $\mbox{disc}(x) > 0 $.
 Let
 \begin{eqnarray*}
 \quad L^{\vee} = \{ \, x(u,v) =
   x_0 u^3 + x_1 u^2 v + x_2 u v^2 + x_3 v^3 \in L \,| \,
   \, x_1 , x_2 \in 3{\mathbb Z}\,  \,\}\,
 \end{eqnarray*}
  be the dual lattice of $L$ with respect to
  the invariant alternating form\,:
\begin{eqnarray*}
 \quad \quad \langle \, x\, , y \,\rangle \,= \,x_3 y_0 - \frac{1}{3}\,  x_2 y_1 +
 \frac{1}{3}\, x_1 y_2 -   x_0 y_3 \,.
\end{eqnarray*}
 It is clear that $L^{\vee}$ is  an $\Gamma$-invariant lattice.
 Put
\begin{eqnarray*}
 \quad \quad L_{\pm} =\{ \, x \in L \,| \, \pm \mbox{disc} (x) > 0 \,\}\,
   \quad \mbox{ and }
  \quad L^{\vee}_{\pm} =\{ \, x \in L^{\vee} \,| \, \pm \mbox{disc}
   (x) > 0 \,\}\, .
\end{eqnarray*}
 In a seminal paper \cite{Shin}, Shintani introduced the following
  Dirichlet series now bearing his name\,:
\begin{eqnarray*}
 \xi_{1} (s)\, =
  \sum_{ x \in \,{\Gamma \backslash \,L_{+}} }
     \frac{1}{ | \,\Gamma_x |}\,
      |\, \mbox{disc} (x)\, |^{-s} \,\, ,
       \quad  \xi_{2} (s)\, =
  \sum_{ x \in \,{\Gamma \backslash \,L_{-}} }
   |\, \mbox{disc} (x)\,|^{-s} \,\, ,
\end{eqnarray*}
\begin{eqnarray*}
  \quad \xi_{1}^{\vee} (s) \,
  = \sum_{ x \in \,{\Gamma \backslash \,L_{+}^{\vee}} }
   \frac{1}{ |\,\Gamma_x | }\, | \,\mbox{disc} (x) /  27\, |^{-s} \,\, ,
    \quad
  \xi_{2}^{\vee} (s)\, =
   \sum_{ x \in \,{\Gamma \backslash \,L_{-}^{\vee}} }
    | \,\mbox{disc} (x)/27 \,|^{-s} \, \, .
\end{eqnarray*}
 Here we scale  Shintani's original series in the dual lattice case by a
  factor of $ 3^{3s}$.
 Using the theory of prehomogeneous vector space, Shintani  proved that
  all these series can be analytically continued to the whole complex plane
  with simple poles at $1$ and $5/6\,$, and satisfy the matrix functional equation
\begin{eqnarray*}
 \binom{ \,\xi_{1} (1-s)\,}{\, \xi_{2} (1-s)\, } = 2^{-1} 3^{\,3s-2}\pi^{-4s}
 \,
 \Gamma \Big(s \,-\frac{1}{6}\,\Big)\,\Gamma (s)^2 \,\Gamma \Big(s \,+
 \frac{1}{6}\,\Big) \\ \quad\quad \times
\left( \begin{array}{cc} \sin 2 \pi s & \sin \pi s  \\
 3 \sin  \pi s & \sin 2 \pi s \end{array} \right)
 \binom{ \,\xi_{1}^{\vee} (s)\,}{\, \xi_{2}^{\vee} (s)\, }  \, .
 \end{eqnarray*}
 In 1997, Ohno made the surprising discovery that Shintani's four
  zeta functions are essentially two functions \cite{Ohno}\,.
 This was later proved by Nakagawa \cite{Nak2}\,.
\begin{theorem}[Nakagawa]
\begin{eqnarray*}
 (1) \quad \xi_{1}^{\vee} (s) = \xi_{2}  (s) \,\; ;
\qquad\qquad (2) \quad \xi_{2}^{\vee} (s) = 3 \, \xi_{1} (s) \,.
 \end{eqnarray*}
\end{theorem}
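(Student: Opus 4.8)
The plan is to translate the four Shintani zeta functions into Dirichlet series attached to quadratic orders and then to reduce the two asserted identities to a single relation between ordinary and truncated $L$-series. Since each of the four functions is an ordinary Dirichlet series in its half-plane of absolute convergence and is determined there by its coefficients, it suffices to prove the identities for $\mathrm{Re}(s)$ large; the full meromorphic statement then follows from the analytic continuation already supplied by Shintani's theory. First I would invoke Eisenstein's parametrization of binary cubic forms, which attaches to each form $x$ its Hessian, a binary quadratic covariant whose discriminant is $-3\,\mathrm{disc}(x)$. This sets up a correspondence between $\Gamma$-orbits in $L$ (resp.\ in the dual lattice $L^{\vee}$) and arithmetic data over the quadratic order whose discriminant is $-3\,\mathrm{disc}(x)$, where in the dual case the defining congruence $x_1,x_2\in 3\mathbb{Z}$ changes the local picture at the prime $3$.

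Next I would feed this correspondence into the class field theory interpretation of the Datskovsky--Wright theorem. The content is that the weighted count of cubic forms of a given discriminant is governed by the $3$-torsion of ray class groups of the associated quadratic order, so that each $\xi_i$ and $\xi_i^{\vee}$ becomes an explicit sum over quadratic discriminants with coefficients expressed through $L$-series of quadratic orders. Crucially, the passage $L\leftrightarrow L^{\vee}$ --- which is exactly the pairing built into the invariant alternating form, and which underlies Shintani's matrix functional equation --- corresponds on the arithmetic side to the Scholz-type reflection $D\leftrightarrow -3D$ between the order of discriminant $D$ and that of discriminant $-3D$. Under this dictionary the ordinary lattice $L$ yields ordinary $L$-series while the dual lattice $L^{\vee}$, because of its defining congruence at $3$, yields a truncated $L$-series; the sign swap between positive and negative discriminant matches the pairing of $\xi_1^{\vee}$ with $\xi_2$ and of $\xi_2^{\vee}$ with $\xi_1$, while the behavior of the $|\Gamma_x|=3$ automorphisms is what produces the extra factor $3$ in identity $(2)$ but not in $(1)$.

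With this reduction in hand, both identities collapse to one statement: a general relation between the ordinary $L$-series $L(s,\mathcal{O})$ of a quadratic order $\mathcal{O}$ and its truncated analogue. I would prove this relation by an Euler-product comparison. Both series factor over primes; the local factors coincide away from $3$ and the conductor, so the relation reduces to an explicit identity among the finitely many local factors at $3$ and at the conductor primes, where the reflection $D\mapsto -3D$ alters the splitting and ramification type. Computing these local factors explicitly and reassembling the products yields the desired general relation, after which matching Shintani's archimedean gamma factors together with the constants $3^{3s}$ and $27$ recovers the precise normalizations in $(1)$ and $(2)$.

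The main obstacle I expect is precisely this local analysis at the prime $3$, together with the passage between $L$-series of non-maximal orders and those of their maximal overorders. Every delicate feature of the statement --- the scaling by $3^{3s}$, the factor $27$ in the dual definitions, the factor $3$ in $(2)$, and the $1/|\Gamma_x|$ weights --- is concentrated there, because the reflection $D\leftrightarrow -3D$ interchanges the behavior at $3$ and the automorphism data. Getting the truncated local factor at $3$ to match the ordinary one, while accounting correctly for conductors and for the $3$-torsion counts, is the crux; once that local identity is established, the global relation and hence the Ohno--Nakagawa identities follow by a routine product computation.
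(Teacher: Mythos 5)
Your three-step skeleton (Eisenstein's parametrization, a class-field-theoretic reading of Datskovsky--Wright, and a final identity between ordinary and truncated $L$-series of quadratic orders) is the correct overall strategy, but the dictionary you build on it is wrong in three concrete ways. First, the assignment of series to lattices is backwards. Eisenstein's parametrization applies to the dual lattice $L^{\vee}$ (orbits in $L$ itself are parametrized by cubic rings, not by Hessian data over quadratic orders), and what it produces there is the \emph{full} $L$-series $L(2s,\chi,\mathcal{O}_{k,c})$, summed over all conductors $c$ and all characters with $\chi^{3}=1$: the integral ideals $\beta\,\mathfrak{a}^{-3}$ being counted are arbitrary invertible ideals, not prime-to-conductor ones, so the congruence at $3$ defining $L^{\vee}$ causes no truncation whatsoever. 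Truncation enters on the $\xi_{i}$ side instead: Datskovsky--Wright expresses $\xi_{i}(s)$ through $\zeta_{K}(2s)/\zeta_{K}(4s)$ for cubic fields $K$, and pushing the Artin $L$-function of the cyclic extension $\widetilde{K}/k$ down to the quadratic order of conductor $f$ (where $\Delta_{K}=f^{2}\Delta_{k}$) yields exactly the truncated series $L^{*}(\cdot,\chi,\mathcal{O}_{k,f})$ of a \emph{primitive} cubic character. Second, no Scholz-type reflection $D\leftrightarrow-3D$ occurs or is needed: in identity $(1)$ both sides are sums over the \emph{same} imaginary quadratic algebras with the same discriminants (the sign swap is merely the $-27$ in $\mathrm{disc}(x)=-27n$, together with the fact that a complex cubic field has imaginary quadratic resolvent), and invoking such a reflection as an unproven ``dictionary'' is close to circular, since reflection statements of that type are of essentially the same depth as the Ohno--Nakagawa identities themselves. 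Third, the factor $3$ in $(2)$ is not produced by forms with $|\Gamma_{x}|=3$: all forms entering $\xi_{2}^{\vee}$ have negative discriminant, hence trivial stabilizer. It comes from the unit index $[\,\mathcal{O}_{k,c}^{\,*}:\mathcal{O}_{k,c}^{\,*\,3}\,]=3$ for \emph{real} quadratic orders (rank-one unit groups), which for imaginary quadratic orders instead equals $|\Gamma_{x}|$ and cancels.

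The decisive gap, however, is your proof plan for the final identity. A comparison of Euler products ``reducing to finitely many local factors at $3$ and at the conductor primes'' cannot work, because $L(s,\chi,\mathcal{O}_{k,m})$ for a non-maximal order is \emph{not} an Euler product: ideals that are not prime to the conductor do not factor uniquely into primes. The statement actually needed is global, not local: for $\chi$ a primitive character of \emph{odd} order of $Cl_{k,f}$,
\begin{eqnarray*}
 \sum_{d \,=\, 1}^{\infty} \frac{1}{d^{\,s}}\,
  L( \, s \,,\chi \,, \mathcal{O}_{k ,  f d}\, )
  \;=\; \zeta (s)\, \zeta (3s-1)\,
  \frac{\,L^*( \, s \, , \chi \,,\mathcal{O}_{k,f}\, )}
   {\,L^*( \,2 s \, , \chi \,,\mathcal{O}_{k,f}\, )} \, ,
\end{eqnarray*}
a sum over \emph{all} orders whose conductor is a multiple of $f$; the factor $\zeta(s)\zeta(3s-1)$ (which at $2s$ becomes the $\zeta(2s)\zeta(6s-1)$ of Datskovsky--Wright) and the ratio $L^{*}(s)/L^{*}(2s)$ (matching $\zeta_{K}(2s)/\zeta_{K}(4s)$) are produced precisely by this summation over non-maximal orders, which your sketch omits. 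Proving it requires expanding each $L(s,\chi,\mathcal{O}_{k,fd})$ as a finite sum of truncated series over intermediate orders weighted by the indices $|\,U(\mathcal{O}_{k,m'},\mathcal{O}_{k,m})\,|$, evaluating $\sum_{u}|\,U(\mathcal{O}_{k,d},\mathcal{O}_{k,ud})\,|\,u^{-s}$ via the class number formula for orders, and finally a prime-by-prime identity that is valid only because $\chi$ has odd order (at a ramified prime one needs $\chi(\mathfrak{p})=1$, which follows from $\chi(\mathfrak{p})^{2}=1$ only when the order of $\chi$ is odd) --- a hypothesis your proposal never isolates, and without which the identity, hence the theorem, is false as stated.
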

\noindent
 Nakagawa's proof uses very sophisticated counting arguments.
 In this paper, we give a more streamlined proof by exploiting
  the rich algebraic structure  lying beneath  the four zeta functions.
 In fact, we show that identities (1) and (2) above can be further
  divided into infinitely many identities involving the $L$-series
  and the truncated $L$-series  of quadratic orders.
 For other works related to these two mysterious identities,
  see \cite {CT, CRT, Dioses, OTS,  Os, TT}\,.

 This paper is organized as follows.
 In section $2\,$, we introduce the  notations and definitions.
 In section $3\,$, we give a self-contained introduction to
  Eisenstein's parametrization of binary cubic forms.
 Using this parametrization, we can express the $\xi_{\,i}^{\vee} (s)$'s directly
  in terms of $L$-series of quadratic orders.
 Note that in \cite{Nak2} Nakagawa has to resort to an analog of
  Datskovsky \& Wright's formula to start the counting.
 In section $4\,$, using the techniques of class field theory,
  we interpret Datskovsky \& Wright's  Theorem  in terms of
  the truncated $L$-series of quadratic orders.
 This  reduces the proof of Theorem $1.1$ to an identity involving
  the $L$-series and the truncated $L$-series  of quadratic orders.
 We  prove this identity in section $5\,$ by establishing a general relation
  between  these two types $L$-series.
 In the Appendix, we derive   a simple relation connecting
  the abelian $L$-series  of a number field $k$
  and the $L$-series of  certain orders of $k^{n+1}$.
 This is inspired by a problem arising in Section $4\,$.

\section{Basic Definitions}
 Throughout this paper, we denote the cardinality of a finite set $X$ by $ | X | \,$.
 We let ${\mathbb Z}$ denote the ring of integers
  and ${\mathbb Z}^{+}$ its subset of positive integers.
 Let ($a,b$) denote the greatest common divisor of integers $ a $ and $b\,$.
 Let $\mbox{sgn}(a)$ denote the sign of a real number $a\,$.
 If $\varphi : A \rightarrow B $ is a map and $H$ is a subset of $B$,
  we write $ \varphi^{-1}(H) $ for the inverse image of $H$.
 If $G$ is an Abelian group, we let $G^{\vee}= \mbox{Hom}(\,G, {\mathbb C}^* )$ denote its dual group.
 We assume that all rings $R$ have the identity $1$ and
  all subrings of $R$ share the same identity element of $R\,$.
 Furthermore, we let $R^*$ denote the group formed by the
  invertible elements of $R\,$.

\subsection*{Orders of $\acute{\mbox{e}}$tale algebras}
 Let $A $ be an $\acute{\mbox{e}}$tale algebra over ${\mathbb Q} \,$.
 This means that $ A \simeq K_{1}  \times \cdots \times K_{m}\,$, where
  $K_{i}$ (~$ 1 \leq i \leq m $~) are finite extensions of ${\mathbb Q} \,$.
 We let $\mbox{Tr}_{A/\,{\mathbb Q}}$ denote the trace map and
  $\mbox{N}_{A/\,{\mathbb Q}}$ the norm map  from $A$ to $\mathbb Q \,$.
 Let $ \mathcal{O}_{A}\, $ denote the unique maximal order of $A \,$, and
  $ \Delta_{A} $ the discriminant of $A \,$.
 Thus $\mathcal{O}_{A}\simeq \mathcal{O}_{K_{1}} \times \cdots \times \mathcal{O}_{K_{m}}$
  and $ \,\Delta_{A} = \Delta_{K_{1}} \cdots \Delta_{K_{m}} \,$,
  where  $ \mathcal{O}_{K_{i}} $ is  the maximal order of $ K_{i}\,$,
  and $\Delta_{K_{i}}$ is the discriminant of $ K_{i}\,$, $ 1 \leq i \leq m\,$.

 If $ \mathfrak a $ and $\mathfrak b$ are ${\mathbb Z}$-submodules of $A\,$,
  we let $\mathfrak a \,\mathfrak b$ denote the ${\mathbb Z}$-submodules of $A\,$
  generated by elements of the form $ \alpha \,\beta$ with
  $ \alpha \in \mathfrak a $ and $ \beta \in \mathfrak b\,$.
 We call a finitely generated ${\mathbb Z}$-submodule $ \mathfrak a $ of $A\,$
  a full ${\mathbb Z}$-module if the  ${\mathbb Z}$-rank  of $\mathfrak a $ is
  equal to the ${\mathbb Q}$-dimension of $ A\,$.
 If ${\mathfrak a }$ is a full ${\mathbb Z}$-module of $A$, we write
\begin{eqnarray*}
   {\mathfrak a}^{\vee} \,=\,
   \{\, \beta \in A \, | \,
   \mbox{Tr}_{A/\,{\mathbb Q}} (\, \alpha \, \beta \, ) \in  \mathbb Z \,,
   \, \forall \,\, \alpha \in {\mathfrak a} \, \}
\end{eqnarray*}
  for its dual full ${\mathbb Z}$-module.

 We  call   a subring $\mathcal {O}$ of $\mathcal {O}_{A}$   an  order of $A\,$
  if $\mathcal {O}$ is a full ${\mathbb Z}$-module.
 Ideals of the ring $\mathcal {O}$ are referred to integral ideals of $\mathcal {O}$.
 We call a full ${\mathbb Z}$-module  $ \mathfrak a $ of $A$ a (fractional) $\mathcal {O}$-ideal
  if $\mathcal {O}\mathfrak a = \mathfrak a\, $.
 We say an $\mathcal {O}$-ideal ${\mathfrak a }$ is
  $\mathcal{O}$-invertible
  if there exists an $\mathcal {O}$-ideal ${\mathfrak b }$ such that
  ${\mathfrak a \,\mathfrak b }= \mathcal {O}$.
 Observe that $\mathcal{O}_{A}$-ideals are always $\mathcal{O}_{A}$-invertible.
 The set of invertible $\mathcal {O}$-ideals forms a group under ideal multiplication.
 We denote this group by $I(\mathcal{O})\,$.
 Moreover, let $ P(\mathcal{O}) $ denote the subgroup of $I(\mathcal{O})\,$
  consisting of principal $\mathcal {O}$-ideals
  $ \alpha \,\mathcal {O}$ with $\alpha \in A^*$.
 The quotient group $I(\mathcal{O}) / P(\mathcal{O})$ is  called the
  ideal class group of $\mathcal {O}$ and is denoted by $Cl(\mathcal{O})\,$.
 If ${\mathfrak a }$ is an invertible $\mathcal{O}$-ideal,  we let $[\,{\mathfrak a }\,]$
  denote its ideal class  in $Cl(\mathcal{O})\,$.

 Let ${\mathfrak c}$ be an integral ideal of an order $\mathcal{O}$.
 We say an integral ideal ${\mathfrak a }$ of $\mathcal{O}$ is prime to
  ${\mathfrak c }$ if ${\mathfrak a } + {\mathfrak c } =\mathcal{O} $.
 We say that an invertible $\mathcal{O}$-ideal ${\mathfrak a }$ is prime to
  ${\mathfrak c }$ if ${\mathfrak a }$ can be written as ${\mathfrak a }_1 /{\mathfrak a }_2 \,$,
  where ${\mathfrak a }_1 $ and $ {\mathfrak a }_2 $ are integral invertible
  ideals of $\mathcal{O}$ prime to ${\mathfrak c }\,$.
 All invertible $\mathcal{O}$-ideals prime to ${\mathfrak c }\,$ form a group which is
  denoted by $I(\mathcal{O},{\mathfrak c })\,$.
 The conductor of an order $\mathcal{O}$ of $A$ is defined  by
\begin{eqnarray*}
   {\mathfrak f} \,=\, \{\, \gamma \in A \, | \, \gamma \,
   \mathcal {O}_{A} \subseteq \mathcal
   {O} \,\} \,.
\end{eqnarray*}
 It is the largest integral  ideal of $\mathcal {O}_{A}$ contained in $\mathcal {O}\,$.
 An integral $\mathcal{O}$-ideal ${\mathfrak a }$ which is prime to
  the conductor ${\mathfrak f }$ of $\mathcal{O}$ is
  $\mathcal{O}$-invertible with  inverse
  ${\mathfrak a}^{-1}=\mathcal{O} + {\mathfrak f }\,( \,{\mathfrak a } \,\mathcal{O}_A)^{-1}  $.

\begin{lemma} $(Dedekind)$
 The map  $\tilde{{\mathfrak a }} \mapsto \tilde{{\mathfrak a }} \cap \mathcal{O} $
  is a bijection between the set of integral
  $\mathcal{O}_A$-ideals prime to the conductor ${\mathfrak f }$ of $\mathcal{O} $
  and the set of integral   $\mathcal{O}$-ideals prime to ${\mathfrak f }\,$.
 The inverse map is given by : ${\mathfrak a } \mapsto {\mathfrak a }\,\mathcal{O}_A \,$.
\end{lemma}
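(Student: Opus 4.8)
The plan is to prove the two stated maps are well defined, land in the correct sets, and compose to the identity in both orders, so that they are mutually inverse bijections. The single tool underlying every step is the coprimality decomposition: if an integral ideal is prime to ${\mathfrak f}$, then $1$ can be written as a sum of an element of that ideal and an element of ${\mathfrak f}$. This will be combined repeatedly with the two defining features of the conductor recorded in the excerpt, namely that ${\mathfrak f}$ is an $\mathcal{O}_A$-ideal (so $\mathcal{O}_A {\mathfrak f} = {\mathfrak f}$) and that ${\mathfrak f} \subseteq \mathcal{O}$.

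First I would check that each map respects the two index sets. Given an integral $\mathcal{O}$-ideal ${\mathfrak a}$ with ${\mathfrak a} + {\mathfrak f} = \mathcal{O}$, multiplying by $\mathcal{O}_A$ and using $\mathcal{O}_A {\mathfrak f} = {\mathfrak f}$ gives ${\mathfrak a}\mathcal{O}_A + {\mathfrak f} = \mathcal{O}_A$; since ${\mathfrak a} \subseteq \mathcal{O} \subseteq \mathcal{O}_A$ forces ${\mathfrak a}\mathcal{O}_A \subseteq \mathcal{O}_A$, the ideal ${\mathfrak a}\mathcal{O}_A$ is integral and prime to ${\mathfrak f}$. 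Symmetrically, for an integral $\mathcal{O}_A$-ideal $\tilde{{\mathfrak a}}$ with $\tilde{{\mathfrak a}} + {\mathfrak f} = \mathcal{O}_A$, I set ${\mathfrak a} = \tilde{{\mathfrak a}} \cap \mathcal{O}$, which is plainly an integral $\mathcal{O}$-ideal; writing $1 = b + f$ with $b \in \tilde{{\mathfrak a}}$ and $f \in {\mathfrak f}$ shows $b = 1 - f \in \mathcal{O}$, hence $b \in \tilde{{\mathfrak a}} \cap \mathcal{O} = {\mathfrak a}$ and therefore ${\mathfrak a} + {\mathfrak f} = \mathcal{O}$.

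The heart of the argument is the two composition identities. To show $({\mathfrak a}\mathcal{O}_A) \cap \mathcal{O} = {\mathfrak a}$, the inclusion $\supseteq$ is immediate; for $\subseteq$ I take $x$ in the left-hand side, write $1 = a + f$ with $a \in {\mathfrak a}$ and $f \in {\mathfrak f}$, and decompose $x = xa + xf$. Here $xa \in {\mathfrak a}$ because $x \in \mathcal{O}$, while $xf \in ({\mathfrak a}\mathcal{O}_A){\mathfrak f} = {\mathfrak a}{\mathfrak f} \subseteq {\mathfrak a}$ since ${\mathfrak f} \subseteq \mathcal{O}$, so $x \in {\mathfrak a}$. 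For the reverse composition $(\tilde{{\mathfrak a}} \cap \mathcal{O})\mathcal{O}_A = \tilde{{\mathfrak a}}$, the inclusion $\subseteq$ is clear, and for $\supseteq$ I take $y \in \tilde{{\mathfrak a}}$, reuse the relation $1 = b + f$ from the previous paragraph, and write $y = yb + yf$. Then $yb \in \mathcal{O}_A {\mathfrak a} = {\mathfrak a}\mathcal{O}_A$, while $yf$ lies in $\tilde{{\mathfrak a}}$ (as $\tilde{{\mathfrak a}}$ is an $\mathcal{O}_A$-ideal) and in $\mathcal{O}_A {\mathfrak f} = {\mathfrak f} \subseteq \mathcal{O}$, whence $yf \in \tilde{{\mathfrak a}} \cap \mathcal{O} = {\mathfrak a} \subseteq {\mathfrak a}\mathcal{O}_A$; thus $y \in {\mathfrak a}\mathcal{O}_A$.

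These four checks together establish that the two maps are mutually inverse bijections. I do not expect a genuine obstacle here: the only point requiring care is the bookkeeping of which module each of the products $xa$, $xf$, $yb$, $yf$ lands in, and this is entirely dictated by the two conductor properties $\mathcal{O}_A {\mathfrak f} = {\mathfrak f}$ and ${\mathfrak f} \subseteq \mathcal{O}$. An alternative route would be to localize at each maximal ideal of $\mathcal{O}$, using that away from the primes dividing ${\mathfrak f}$ one has $\mathcal{O}$ and $\mathcal{O}_A$ agreeing locally, so that both constructions reduce to the identity; but the direct global argument above is shorter and self-contained.
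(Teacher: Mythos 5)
Your proof is correct, and every step checks out: the well-definedness of both maps (via multiplying $\mathfrak{a}+\mathfrak{f}=\mathcal{O}$ by $\mathcal{O}_A$ in one direction, and the observation $b = 1-f \in \tilde{\mathfrak{a}}\cap\mathcal{O}$ in the other), and both composition identities, whose verifications correctly track where $xa$, $xf$, $yb$, $yf$ land using only $\mathcal{O}_A\mathfrak{f}=\mathfrak{f}$ and $\mathfrak{f}\subseteq\mathcal{O}$. Note that the paper itself gives no proof of this lemma --- it is quoted as Dedekind's classical theorem and used as a black box --- so there is nothing to compare against; your argument is the standard self-contained one (essentially the proof found in textbook treatments of orders and conductors), and it is complete as written. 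The only point worth making explicit, if you wanted to polish it, is that $\tilde{\mathfrak{a}}\cap\mathcal{O}$ is indeed an ideal of the ring $\mathcal{O}$ (closure under multiplication by $\mathcal{O}$ follows since $\mathcal{O}\subseteq\mathcal{O}_A$ and $\tilde{\mathfrak{a}}$ is an $\mathcal{O}_A$-module), which you flag as ``plainly'' true; that is fair, but a one-line justification would make the write-up airtight.
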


\noindent
 We can extend the map
  $\tilde{{\mathfrak a }} \mapsto \tilde{{\mathfrak a }} \cap \mathcal{O} $
  to a homomorphism
  $\phi : I(\,\mathcal{O}_A,{\mathfrak f}\,) \rightarrow I(\,\mathcal{O},{\mathfrak f }\,)\,$
  as follows.
 Take $ \tilde{{\mathfrak a }} \in I(\,\mathcal{O}_A,{\mathfrak f  }\,)$ and
  write it as $ \tilde{{\mathfrak a }} = \tilde{{\mathfrak a }}_1 / \,\tilde{{\mathfrak a }}_2 \,$,
  where $\tilde{{\mathfrak a }}_1$ and $\tilde{{\mathfrak a }}_2$ are integral ideals of
  $\mathcal{O}_A $ prime to ${\mathfrak f }\,$.
 Put ${\mathfrak a }_i = \tilde{{\mathfrak a }}_i \,\cap \,\mathcal{O} \,$,
  $ i = 1, 2\, $, and define $\phi (\,\tilde{{\mathfrak a }}\,) = {\mathfrak a }_1 /{\mathfrak a }_2\,$.
 It is easy to see that $\phi$ is well defined.
\begin{corollary}
 The map
  $\phi :  I(\,\mathcal{O}_A,{\mathfrak f }\,) \rightarrow I(\,\mathcal{O},{\mathfrak f }\,)\,$
  is an isomorphism.
 Its inverse map is given by
  $\phi^{-1}: \, {\mathfrak a } \mapsto {\mathfrak a}\,\mathcal{O}_A \,$,
  $\, \forall \,\,{\mathfrak a } \in I(\,\mathcal{O},{\mathfrak f  }\,)\,$.
\end{corollary}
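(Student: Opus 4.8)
My plan is to avoid working directly with $\phi$ and instead study the extension map $\psi : I(\mathcal{O},{\mathfrak f}) \to I(\mathcal{O}_A,{\mathfrak f})$, $\mathfrak{a} \mapsto \mathfrak{a}\,\mathcal{O}_A$, show that it is a group isomorphism, and then identify $\phi$ with $\psi^{-1}$. First I would check that $\psi$ is well defined into the stated target: extension of ideals always respects products, so $\psi$ is a homomorphism on the ambient group of fractional ideals, and the only point to verify is that $\mathfrak{a}\,\mathcal{O}_A$ is prime to ${\mathfrak f}$ whenever $\mathfrak{a}$ is. For an integral ideal $\mathfrak{a}$ prime to ${\mathfrak f}$ this follows from ${\mathfrak a}+{\mathfrak f}=\mathcal{O}$ by multiplying through by $\mathcal{O}_A$ and using that ${\mathfrak f}$ is already an $\mathcal{O}_A$-ideal, so ${\mathfrak f}\,\mathcal{O}_A={\mathfrak f}$, giving $\mathfrak{a}\,\mathcal{O}_A+{\mathfrak f}=\mathcal{O}_A$; the fractional case follows by writing $\mathfrak{a}$ as a quotient of integral ideals prime to ${\mathfrak f}$, which is legitimate since every integral $\mathcal{O}$-ideal prime to ${\mathfrak f}$ is invertible (as recorded just before the statement) and such ideals generate $I(\mathcal{O},{\mathfrak f})$.

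Next I would establish that $\psi$ is bijective, and here all the real input is Dedekind's Lemma. For injectivity, an element of $\ker\psi$ can be written as $\mathfrak{a}_1/\mathfrak{a}_2$ with $\mathfrak{a}_i$ integral and prime to ${\mathfrak f}$ and $\mathfrak{a}_1\mathcal{O}_A=\mathfrak{a}_2\mathcal{O}_A$; contracting both sides and invoking the injectivity of $\tilde{\mathfrak a}\mapsto\tilde{\mathfrak a}\cap\mathcal{O}$ on integral ideals prime to ${\mathfrak f}$ gives $\mathfrak{a}_1=(\mathfrak{a}_1\mathcal{O}_A)\cap\mathcal{O}=(\mathfrak{a}_2\mathcal{O}_A)\cap\mathcal{O}=\mathfrak{a}_2$, so the element is trivial. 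For surjectivity, Dedekind's Lemma shows that every integral $\mathcal{O}_A$-ideal $\tilde{\mathfrak a}$ prime to ${\mathfrak f}$ equals $(\tilde{\mathfrak a}\cap\mathcal{O})\,\mathcal{O}_A=\psi(\tilde{\mathfrak a}\cap\mathcal{O})$, with $\tilde{\mathfrak a}\cap\mathcal{O}$ integral and prime to ${\mathfrak f}$; since these integral ideals generate $I(\mathcal{O}_A,{\mathfrak f})$ and $\psi$ is a homomorphism, $\psi$ is onto. Hence $\psi$ is an isomorphism.

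Finally I would identify $\phi=\psi^{-1}$. On an integral ideal $\tilde{\mathfrak a}$ prime to ${\mathfrak f}$ we have $\psi(\tilde{\mathfrak a}\cap\mathcal{O})=\tilde{\mathfrak a}$ by Dedekind's Lemma, so $\psi^{-1}(\tilde{\mathfrak a})=\tilde{\mathfrak a}\cap\mathcal{O}$; writing a general $\tilde{\mathfrak a}=\tilde{\mathfrak a}_1/\tilde{\mathfrak a}_2\in I(\mathcal{O}_A,{\mathfrak f})$ and using that $\psi^{-1}$ is a homomorphism yields $\psi^{-1}(\tilde{\mathfrak a})=({\tilde{\mathfrak a}_1}\cap\mathcal{O})/({\tilde{\mathfrak a}_2}\cap\mathcal{O})=\mathfrak{a}_1/\mathfrak{a}_2=\phi(\tilde{\mathfrak a})$, exactly the definition of $\phi$. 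Therefore $\phi=\psi^{-1}$ is an isomorphism whose inverse is $\psi:\mathfrak{a}\mapsto\mathfrak{a}\,\mathcal{O}_A$, as asserted. The main obstacle is genuinely concentrated in Dedekind's Lemma (the bijectivity of contraction on integral ideals prime to the conductor), which is why the hypothesis of primality to ${\mathfrak f}$ is indispensable; once that bijection is in hand, the passage from integral ideals to the full groups of invertible fractional ideals is purely formal, requiring only the multiplicativity of extension and the fact that integral ideals prime to ${\mathfrak f}$ generate each group.
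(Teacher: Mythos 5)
Your proof is correct, and it follows exactly the route the paper intends: the paper states this corollary without proof as an immediate consequence of Dedekind's Lemma (Lemma 2.1), and your argument is precisely the natural fleshing-out of that reduction, using the same key input (bijectivity of contraction/extension on integral ideals prime to ${\mathfrak f}$) together with multiplicativity and the quotient representation of elements of $I(\,\mathcal{O},{\mathfrak f}\,)$ and $I(\,\mathcal{O}_A,{\mathfrak f}\,)$. A small bonus of your organization (building the extension map $\psi$ first and identifying $\phi=\psi^{-1}$) is that it simultaneously settles the well-definedness of $\phi$, which the paper only asserts.
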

\noindent
 Put $P(\,\mathcal{O},{\mathfrak f }\,) = I(\,\mathcal{O},{\mathfrak f }\,)
\cap P(\mathcal{O})\, $.
 Then we have
\begin{lemma}
 $ Cl(\mathcal{O}) \simeq I(\,\mathcal{O},{\mathfrak f }\,) / P(\,\mathcal{O},{\mathfrak f }\,)\,$.
\end{lemma}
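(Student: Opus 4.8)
The plan is to realize the asserted isomorphism as the one induced by the tautological inclusion $I(\mathcal{O},\mathfrak{f}) \hookrightarrow I(\mathcal{O})$ followed by the quotient map $I(\mathcal{O}) \to Cl(\mathcal{O})$. Composing these gives a homomorphism $\psi : I(\mathcal{O},\mathfrak{f}) \to Cl(\mathcal{O})$, $\mathfrak{a} \mapsto [\mathfrak{a}]$, and everything reduces to identifying its kernel and proving that it is onto. The kernel is immediate from the definitions: $\psi(\mathfrak{a})$ is trivial exactly when $\mathfrak{a} \in P(\mathcal{O})$, so that $\ker\psi = I(\mathcal{O},\mathfrak{f}) \cap P(\mathcal{O}) = P(\mathcal{O},\mathfrak{f})$. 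Granting surjectivity, the first isomorphism theorem then yields $I(\mathcal{O},\mathfrak{f})/P(\mathcal{O},\mathfrak{f}) \cong Cl(\mathcal{O})$.

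The whole content therefore lies in surjectivity, i.e.\ in showing that every ideal class of $\mathcal{O}$ contains an invertible representative prime to the conductor $\mathfrak{f}$. I would prove the slightly stronger claim that for any invertible $\mathcal{O}$-ideal $\mathfrak{a}$ there is a nonzerodivisor $\alpha \in \mathfrak{a} \cap A^*$ generating $\mathfrak{a}$ locally at every maximal ideal of $\mathcal{O}$ that contains $\mathfrak{f}$. Given such an $\alpha$, the ideal $\mathfrak{b} = \alpha\,\mathfrak{a}^{-1}$ is integral (because $\alpha \in \mathfrak{a}$), invertible, and satisfies $\mathfrak{b}_{\mathfrak{p}} = \mathcal{O}_{\mathfrak{p}}$ for every prime $\mathfrak{p} \supseteq \mathfrak{f}$; checking this equality at all localizations shows $\mathfrak{b} + \mathfrak{f} = \mathcal{O}$, so $\mathfrak{b}$ is prime to $\mathfrak{f}$ and lies in $I(\mathcal{O},\mathfrak{f})$, while $[\mathfrak{b}] = [\mathfrak{a}]^{-1}$. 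As $\mathfrak{a}$ ranges over all invertible ideals, $[\mathfrak{a}]^{-1}$ ranges over all of $Cl(\mathcal{O})$, so $\psi$ is onto.

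To construct $\alpha$, I would use that invertibility is a local condition: localizing $\mathfrak{a}\,\mathfrak{a}^{-1}=\mathcal{O}$ shows $\mathfrak{a}_{\mathfrak{p}}$ is principal over each local ring $\mathcal{O}_{\mathfrak{p}}$. Only finitely many maximal ideals $\mathfrak{p}_1,\dots,\mathfrak{p}_r$ contain $\mathfrak{f}$, since $\mathcal{O}/\mathfrak{f}$ is finite, and $\mathcal{O}/\mathfrak{f}$ decomposes as the product of the Artinian local rings $\mathcal{O}_{\mathfrak{p}_i}/\mathfrak{f}\,\mathcal{O}_{\mathfrak{p}_i}$. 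Reducing $\mathfrak{a}$ modulo $\mathfrak{f}\mathfrak{a}$ gives a corresponding decomposition $\mathfrak{a}/\mathfrak{f}\mathfrak{a} \cong \bigoplus_i \mathfrak{a}_{\mathfrak{p}_i}/\mathfrak{f}\,\mathfrak{a}_{\mathfrak{p}_i}$, in which each summand is free of rank one. Choosing a generator in each summand and lifting to a single $\alpha \in \mathfrak{a}$ by the Chinese Remainder Theorem, Nakayama's lemma guarantees that $\alpha$ generates each $\mathfrak{a}_{\mathfrak{p}_i}$. A final genericity adjustment, modifying $\alpha$ by an element of $\mathfrak{f}\mathfrak{a}$ (which does not change its class modulo $\mathfrak{f}\mathfrak{a}$), ensures that all components of $\alpha$ in $A = K_1 \times \cdots \times K_m$ are nonzero, so that $\alpha \in A^*$.

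The main obstacle is exactly this construction at the primes dividing $\mathfrak{f}$, where $\mathcal{O}$ need not be regular and the familiar Dedekind machinery is unavailable; the point is that invertibility of $\mathfrak{a}$ is precisely the substitute that restores local principality, after which CRT and Nakayama do the rest. I expect the routine verifications --- that $\psi$ is a homomorphism, that $\mathfrak{b}=\alpha\,\mathfrak{a}^{-1}$ is integral and prime to $\mathfrak{f}$, and that the local decomposition of $\mathcal{O}/\mathfrak{f}$ is as stated --- to be straightforward once this core step is in place.
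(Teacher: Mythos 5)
Your proposal is correct. Note that the paper itself offers no proof of this lemma: it is stated as a standard fact (following Lemma 2.1 and Corollary 2.2), so there is no argument of the paper's to compare against; what you have written is the canonical proof that the paper implicitly invokes. Your reduction is the right one --- the kernel computation is definitional, and the entire content is that every class of $Cl(\mathcal{O})$ contains an invertible ideal prime to $\mathfrak{f}$ --- and your construction of such a representative (local principality of invertible ideals, the decomposition of the finite ring $\mathcal{O}/\mathfrak{f}$ into its localizations, a CRT lift of local generators, and Nakayama) is exactly the standard argument, e.g.\ the one used for orders in number fields in Neukirch. One point worth highlighting as genuinely necessary rather than cosmetic: since $A$ is \'etale rather than a field, $A$ has zero divisors, so an element of $\mathfrak{a}$ generating all the relevant localizations need not lie in $A^*$; your final adjustment of $\alpha$ by an element of $\mathfrak{f}\mathfrak{a}$ (harmless for the Nakayama step, since it does not change $\alpha$ modulo each $\mathfrak{p}_i\mathfrak{a}_{\mathfrak{p}_i}$) is precisely what makes $\alpha\mathcal{O}$ a legitimate element of $P(\mathcal{O})$, and the existence of such an adjustment follows because the bad locus in the full lattice $\mathfrak{f}\mathfrak{a}$ is a finite union of cosets of infinite-index subgroups. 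With that detail in place the argument is complete.
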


Let $\mathcal{O}$ and $\mathcal{O}'$ be orders of $A$ such that
  $ \mathcal{O} \subseteq \mathcal{O}'$.
 We say that an $\mathcal{O}$-ideal ${\mathfrak a}$ lies under
  an $\mathcal{O}'$-ideal ${\mathfrak a}'$ if ${\mathfrak a}\,\mathcal{O}'= {\mathfrak a}'$.
 We let $ U(\,\mathcal{O}',\mathcal{O}\,) $ denote the group formed
  by those invertible $\mathcal{O}$-ideals lying under $\mathcal{O}'$.
 Let
  $\varphi_{\mathcal{O}'\!,\mathcal{O}} : Cl(\mathcal{O}) \rightarrow  Cl(\mathcal{O}') $
  denote the  homomorphism sending the
   $\mathcal{O}\,$-ideal class $ [\, \mathfrak a  \,]$
  to the $\mathcal{O}'$-ideal class $[\,\mathfrak a\,\mathcal{O}'\, ] $,
  for every $\mathfrak a \in I(\mathcal{O})\,$.
 The  homomorphism  $\varphi_{\mathcal{O}'\!, \mathcal{O}}$ is surjective
  and  induces the  exact sequence
\begin{eqnarray*}
 1 \,\longrightarrow \, \mathcal{O}^{\,*} \,\longrightarrow\,
 \mathcal{O}'^{\,*} \,\longrightarrow\,
 U(\,\mathcal{O}',\mathcal{O}\,) \,\longrightarrow\,
 Cl(\mathcal{O})\, \longrightarrow \,Cl(\,\mathcal{O}') \,\longrightarrow \, 1
 \,.
 \end{eqnarray*}
 From this we deduce that
 \begin{eqnarray}
 |\,U(\,\mathcal{O}',\mathcal{O}\,) \,| =
\,[\,\mathcal{O}'^{\,*}\!:\mathcal{O}^{\,*} ]
 \,| \,Cl(\mathcal{O})\,|\, / \,| \,Cl(\,\mathcal{O}')\,|\,.
 \end{eqnarray}

 If ${\mathfrak a }$ is an integral invertible ideal of $\mathcal{O}$, then
  $ [\, \mathcal{O}:{\mathfrak a }\,] = [\,\mathcal{O}_A : {\mathfrak a }\,\mathcal{O}_A\,] $.
 We call this index the norm of ${\mathfrak a }\,$, and denote it by ${\bf N} {\mathfrak a }$.
 It is easy to extend the definition of norm to all invertible ideals of $\mathcal{O}$.

 Let $\mathcal {O}$ be an order of $A$  of conductor $\,{\mathfrak f} \,$
  and let $ \mathfrak{A} $ be an ideal class  of $Cl(\mathcal O)$.
 We define the partial zeta function
\begin{eqnarray}
 \zeta( \,s \, , \mathfrak{A} \,, \mathcal{O}\, )
  = \sum_{  {\mathfrak a }\,\in \,\mathfrak{A}  } \,
  {\, {\bf N} {\mathfrak a }^{ -s} }\, ,
\end{eqnarray}
  where $\mathfrak a$ goes through all integral invertible $\mathcal{O}\,$-ideals
  in the ideal class $ \mathfrak{A} $.
 We also define the truncated partial zeta function as
\begin{eqnarray}
 \zeta^{\,*}( \,s \, , \mathfrak{A} \,,
  \mathcal{O}\, )
  = \sum_{  {\mathfrak b } } \, {\, {\bf N} {\mathfrak b }^{- s} }\, ,
\end{eqnarray}
  where $\mathfrak b$ ranges over all integral $\mathcal{O}$-ideals in
   $\mathfrak{A} $ which are prime to  the conductor $ {\mathfrak f} \,$.
 To each character $\chi$ of the class group $ Cl(\mathcal{O})\,$,
  we associate the $L$-series
\begin{eqnarray*}
 L( \,s \, , \chi \,, \mathcal{O} \,) \; = \;
  \sum_{\mathfrak{A} \in Cl(\mathcal{O}) }
  \chi(\mathfrak{A}) \,\zeta( \,s \, , \mathfrak{A} \,, \mathcal{O}\, ) \; =\;
  \sum_{\mathfrak a  }   \,\chi ( \mathfrak a )
  \, {\bf N} {\mathfrak a }^{ - s} \,
\end{eqnarray*}
  and the truncated $L$-series
\begin{eqnarray*}
 L^*( \,s \, , \chi \,, \mathcal{O}\, )\; = \;
  \sum_{\mathfrak{A} \in Cl(\mathcal{O}) }
  \chi(\mathfrak{A}) \,\zeta^{\,*}( \,s \, , \mathfrak{A} \,, \mathcal{O}\,)\,.
\end{eqnarray*}
 Since an   integral prime-to-${\mathfrak f }\,$ $\mathcal{O}$-ideal $\mathfrak b $
   factors uniquely into prime ideals of $\mathcal{O}$ in exactly the same way as
  $\,{\mathfrak b}\,\mathcal{O}_{A}$  does  in $\mathcal{O}_{A}\,$,
  we have the Euler product expansion
\begin{eqnarray*}
 L^*( \,s \, , \chi \,, \mathcal{O}\, )\,
  = \sum_{ (\, {\mathfrak b} \,,\, {\mathfrak f }\,)\,=\, 1 } \,
   \chi ( \mathfrak b )  \,{\bf N} {\mathfrak b }^{- s}
  \,\,\,
  = \,\,\, \prod_{ {\mathfrak p} \, \nmid \, {\mathfrak f } } \,
   \Big( \, 1 - \frac{ \, \chi ( \mathfrak p ) }
      { \, {\bf N} {\mathfrak p }^{ s} }\, \Big)^{-1}  ,
\end{eqnarray*}
  where the product runs over  prime ideals  $\mathfrak p $
  of $\mathcal{O}$ not dividing  the conductor ${\mathfrak f }\,$.

\subsection*{Characters and conductors of class groups}
 Let $k$ be a number field and  let $ {\mathfrak m } $ be a modulus in $k$,
  that is, $ {\mathfrak m }  $ is a formal product ${\mathfrak m }_0\,{\mathfrak m }_\infty $
  where $ {\mathfrak m }_0 $ is an integral ideal  of $\mathcal{O}_k\,$
  and ${\mathfrak m }_\infty$ is a set of real embeddings of $k\,$.
 Let $\alpha \, \in k \,$.
 We say that
  $\alpha \equiv 1 \,( \,\mbox{mod}^{\,*}{ \mathfrak m }\,)$
  if $ v_{\mathfrak p }(\,\alpha -1\, ) \geq v_{\mathfrak p }(\,{\mathfrak m}_0)$
  for all primes ${\mathfrak p }$ of $\mathcal{O}_k $ dividing ${\mathfrak m }_0\,$,
  and if $\sigma ( \,\alpha ) > 0 $
  for all embeddings $\sigma \in {\mathfrak m }_\infty\,$.
 We let $I_k(\mathfrak m) = I(\mathcal{O}_k\,,{\mathfrak m}_0) $ denote
  the group formed by those $\mathcal{O}_k$-ideals prime to ${\mathfrak m}_0\,$,
  and let $ P_{k,1} (\mathfrak m) $ denote  its  subgroup
  consisting of principal ideals $\alpha \mathcal{O}_k$ with
  $ \alpha \equiv 1 \, (\,\mbox{mod}^{\,*}{\mathfrak m }\,)\,$.
 The quotient group  $  I_k(\mathfrak m) \,/P_{k,1} (\mathfrak m)$
  is  called the ray class group of $k$ modulo $\mathfrak m\,$,
  and is denoted by $ Cl_k (\mathfrak m)$.

 For each pair of moduli ${\mathfrak m }= {\mathfrak m }_0 \,{\mathfrak m}_\infty $
  and ${\mathfrak m }'={\mathfrak m }_0' \,{\mathfrak m }_\infty'$
  with ${\mathfrak m }' |\,{\mathfrak m }\,$,
  (i.e., ${\mathfrak m }_0'\,|\,{\mathfrak m }_0$ and
   ${\mathfrak m }_\infty' \subseteq {\mathfrak m }_\infty$)\,,
  we let $\psi_{k,{\mathfrak m}'\!,{\mathfrak m }} $ denote the surjective homomorphism
\begin{eqnarray*}
 Cl_k ({\mathfrak m}) = I_k({\mathfrak m }) \,/P_{k,1} ({\mathfrak m })
   \, \rightarrow \,
 Cl_k (\,{\mathfrak m}') = I_k(\,{\mathfrak m}') \,/P_{k,1} (\,{\mathfrak m}') \,
\end{eqnarray*}
  induced by the inclusion map
    $I_k({\mathfrak m }) \subseteq I_k(\,{\mathfrak m }')\,$.
 If $\chi'$ is a character of $Cl_k (\,{\mathfrak m }')\,$, then
   $\chi = \chi' \circ \, \psi_{k,{\mathfrak m }'\!,{\mathfrak m }}$
  is a character of $Cl_k ({\mathfrak m })\,$.
 We say in this case that $\chi $ is induced by $\chi'$,
  or $\chi $ is defined at $I_k(\,{\mathfrak m }')\,$.
 If a character $\chi$  of $Cl_k ({\mathfrak m})$ cannot be induced by
  any character $\chi'$ of $Cl_k (\,{\mathfrak m }')\,$
  for any  proper divisor ${\mathfrak m }'$ of ${\mathfrak m}\,$,
  then we say that $\chi$ is  a primitive character of  $Cl_k ({\mathfrak m})$.
 It is a standard fact in class field theory that every character
  $\chi$ of $Cl_k (\mathfrak m)\,$ is induced by a unique primitive
  character of $Cl_k (\,\mathfrak f\,)$ for some modulus $\mathfrak f\, $.
 The modulus $\mathfrak f $ is uniquely determined by  $\chi$
  and is called the conductor of $\chi\,$.

 Similarly, let $H'$ be a subgroup of $Cl_k (\,{\mathfrak m}')$
  and let ${\mathfrak m }$ be a  modulus in $k$ divisible by ${\mathfrak m }'\, $,
  then $ H = \psi_{k,{\mathfrak m }'\!,{\mathfrak m }}^{-1}(\,H')\,$
  is a subgroup of $Cl_k (\mathfrak m)$.
 We say that the subgroup $H $ of $Cl_k (\mathfrak m)$ is induced by
  the subgroup $H'$ of $Cl_k (\,{\mathfrak m}')\,$.
 For each subgroup  $H$ of $Cl_k (\mathfrak m)\,$, there is a unique
  modulus  $\mathfrak f $ and a unique subgroup $H_1$ of $ Cl_k (\,\mathfrak f\,)\,$
  such that $H$ is induced by $H_1$ and
  $H_1$ cannot be further induced by any other subgroup
  of $Cl_k (\,{\mathfrak m}')\,$ for any modulus ${\mathfrak m }'$.
 The modulus  $\mathfrak f $ is called the conductor of $H\,$.

 More generally, let (\,$I$,\,$ \prec $\,) be a partially ordered set.
 We assume for simplicity that for each $j \in I$, the number of indexes
  $ i \in I $ with $ i \prec j $ is finite.
 Let $(\,A_i\,)_{\,i \in I}$ be a family of finite abelian groups and
  suppose we have a family of surjective homomorphisms
   $\varphi_{\,ij}\,: A_j \rightarrow A_i $
   for all $ i\prec j $ with the following properties : \\
  \indent  1) $\varphi_{\,ii}$ is the identity map on $A_i \,$; \\
  \indent  2) $ \varphi_{\,im} = \varphi_{\,ij} \circ \varphi_{j\,m} $
             for all  $ i \prec j \prec m \,$; \\
  \indent  3) If $ i \prec m $ and $ j \prec m \,$,
     then there exists a $l \in I$ with
     $ l \prec i $ and  $ l \prec j $ such that \\
  \indent
     $ \quad \quad \mbox{Ker}\,\varphi_{\,im}\, \,\mbox{Ker}\,\varphi_{j\,m}
        = \mbox{Ker}\,\varphi_{\,lm}\,$. \\
 Then the pair
   $(\,(\,A_i\,)_{\,i \,\in \,I}, (\,\varphi_{\,ij}\,)_{ \,i\prec j \,\in \, I }) $
  is called a compatible inverse system of abelian groups over $I$,
  and the homomorphisms $\varphi_{\,ij}$ are called
  the transition morphism of the system.
 We say that a  subgroup $H$ of $A_j$ is induced by
  a subgroup $H'$ of $A_{i}$ if
   $ i \prec j $ and  $\varphi_{ij}^{-1} (\,H') = H \,$.
 Clearly a subgroup $H$ of $A_j$ is induced from a subgroup of $A_i$
  if and only if $ i \prec j $ and $ \mbox{Ker}\,\varphi_{\,ij} \subseteq H\, $.
 Thus in  a compatible inverse system
  every subgroup $H$ of $A_j$  is induced by a unique subgroup $H_1$ of $A_f$
  for some $ f \prec i $ such that
  $H_1$ cannot be further induced by any other subgroups except itself.
 Such index $f \in I $ is unique and is called the conductor of
  the subgroup $H$.
 One defines the conductors of  characters of $A_j$ in a similar way.
 In fact, the conductor of a character $\chi$ of $A_j$ is
  simply the conductor of the subgroup $ \mbox{Ker}\,\chi$ of $A_j\,$.

 As an example let $k$ be a quadratic  $\acute{\mbox{e}}$tale algebra
  over $ {\mathbb Q} \,$, i.e.\,,
  $k\,$ is either a quadratic field or $ {\mathbb Q} \varoplus {\mathbb Q}\,$.
 Let $\mathcal{O}_{k}$ denote the maximal order of $k$
  and write $\bf 1$ for the identity element of $k\,$.
 For  $m \in {\mathbb Z}^{+}$, we let
  $ \mathcal{O}_{k,m} = {\mathbb Z} \,{\bf 1} + \,m\, \mathcal{O}_{k}\, $
  denote the order of $k$ of conductor $m\,$.
 It is easy to see that for $c\,,\, d \in {\mathbb Z}^{+}$, we have
   $\mathcal{O}_{k,c}\,\mathcal{O}_{k,d} = \mathcal{O}_{k,\,l}\,$,
 where $l = (\,c\,,\,d\,)\,$.
 Write $Cl_{k,m} = Cl(\mathcal{O}_{k,m})$
   for the  class group of $\mathcal{O}_{k,m}\,$.
 Note that in the case $ k ={\mathbb Q}^2$,
  each class of $Cl_{\,{\mathbb Q}^2 \!,\,m}$ contains an  invertible
  $\mathcal{O}_{{\mathbb Q}^2\!,\,m}$-ideal of the form
  $ {\mathfrak a}_{\,t} = {\mathbb Z}\,(\,1\,,t) + {\mathbb
  Z}\,(\,0\,,m)\,$,
  where $t$ is an integer prime to $m\,$,
  and two such ideals $ {\mathfrak a}_{\,t}$ and $ {\mathfrak a}_{s}$ represent the same class
  precisely when $ s \equiv \pm \,t \,\,\mbox{mod}\,m $.
 Since we have $ {\mathfrak a}_{1}\,= \mathcal{O}_{{\mathbb Q}^2,\,m} \,$ and
  $ {\mathfrak a}_{s}\, {\mathfrak a}_{\,t} = {\mathfrak a}_{s\, t }$,
  the map  sending $[\,{\mathfrak a}_{\,t}\,] $ to $ \pm \,t \!\!\mod m
  \,$ is an isomorphism
  $Cl_{\,{\mathbb Q}^2,\,m}\cong ( \, {\mathbb Z} \, / m {\mathbb Z}  \,)^* / \, \{ \pm 1 \}$
   (see also Lemma A$.2$).

 For  $m\,,\,m'\in {\mathbb Z}^{+}$ with $m' |\,m\,$,
  let $\varphi_{k,m'\!,m} : Cl_{k,m} \rightarrow Cl_{k,m'} $
  denote the  surjective homomorphism sending the
   $\mathcal{O}_{k,m}\,$-ideal class $ [\, \mathfrak a  \,]$ to the
   $\mathcal{O}_{k,m'}\,$-ideal class $ [\,\mathfrak a\,\mathcal{O}_{k,m'} \,] $,
  for every  $\mathfrak a \in I(\mathcal{O}_{k,m})\,$.
 The following Lemma shows that for characters
  of the class groups of quadratic orders,
  the concept of conductor is well defined.

\begin{lemma}
 Let $k$ be a quadratic  $\acute{\mbox{e}}$tale algebra.  Then
  $(\,(Cl_{k,m})_{m \,\in {\mathbb Z}^{+}} ,
  (\,\varphi_{k,m'\!,m}\,) $ forms a compatible
   inverse system over
 ${\mathbb Z}^{+}$ partially ordered by the division relation.
\end{lemma}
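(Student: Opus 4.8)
The plan is to dispose of conditions 1) and 2) formally and to concentrate on the compatibility condition 3). For $m',m\in{\mathbb Z}^{+}$ with $m'\mid m$ the map $\varphi_{k,m'\!,m}$ sends $[\,\mathfrak a\,]$ to $[\,\mathfrak a\,\mathcal{O}_{k,m'}\,]$, so condition 1) ($\varphi_{k,m,m}=\mathrm{id}$) is immediate, and condition 2) follows from the multiplication rule $\mathcal{O}_{k,m'}\mathcal{O}_{k,m''}=\mathcal{O}_{k,(m'\!,m'')}=\mathcal{O}_{k,m''}$ valid whenever $m''\mid m'$. Surjectivity of the $\varphi_{k,m'\!,m}$ is a special case of the surjectivity of $\varphi_{\mathcal{O}'\!,\mathcal{O}}$ recorded in Section 2, and for fixed $m$ the set of divisors $m'\mid m$ is finite, so $({\mathbb Z}^{+},\mid)$ and the groups $A_m=Cl_{k,m}$ satisfy all the standing hypotheses. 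It remains to prove, for $c,d\mid N$ and $l=(c,d)$, the identity $\mathrm{Ker}\,\varphi_{k,c,N}\cdot\mathrm{Ker}\,\varphi_{k,d,N}=\mathrm{Ker}\,\varphi_{k,l,N}$ inside $Cl_{k,N}$. One inclusion is automatic: since $l\mid c$ and $l\mid d$, condition 2) gives $\varphi_{k,l,N}=\varphi_{k,l,c}\circ\varphi_{k,c,N}$, whence $\mathrm{Ker}\,\varphi_{k,c,N}\subseteq\mathrm{Ker}\,\varphi_{k,l,N}$, and likewise for $d$; therefore the product on the left lies in $\mathrm{Ker}\,\varphi_{k,l,N}$.

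For the reverse inclusion I would route everything through a single auxiliary homomorphism. Applying the five-term exact sequence of Section 2 to the pair $\mathcal{O}_{k,m}\subseteq\mathcal{O}_{k}$ (whose conductor is $m\mathcal{O}_k$, with $\mathcal{O}_{k,m}/m\mathcal{O}_k\cong{\mathbb Z}/m{\mathbb Z}$) identifies $U(\mathcal{O}_k,\mathcal{O}_{k,m})$ with $(\mathcal{O}_k/m\mathcal{O}_k)^*/({\mathbb Z}/m{\mathbb Z})^*$ and yields a natural surjection
\[
 \Phi_m:(\mathcal{O}_k/m\mathcal{O}_k)^*\longrightarrow \mathrm{Ker}\big(Cl_{k,m}\xrightarrow{\varphi_{k,1,m}}Cl_k\big),
\]
whose kernel is the subgroup $E_mV_m$ generated by the image $E_m$ of $\mathcal{O}_k^*$ and the image $V_m$ of $({\mathbb Z}/m{\mathbb Z})^*$. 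The key structural fact is functoriality: for $m'\mid m$ the reduction $r_{m'\!,m}:(\mathcal{O}_k/m\mathcal{O}_k)^*\to(\mathcal{O}_k/m'\mathcal{O}_k)^*$ satisfies $\varphi_{k,m'\!,m}\circ\Phi_m=\Phi_{m'}\circ r_{m'\!,m}$. Because $r_{m'\!,m}$ carries the image of $\mathcal{O}_k^*$ onto the image of $\mathcal{O}_k^*$ and the image of $({\mathbb Z}/m{\mathbb Z})^*$ onto the image of $({\mathbb Z}/m'{\mathbb Z})^*$, it carries $\ker\Phi_m$ onto $\ker\Phi_{m'}$; combined with condition 2), which forces $\mathrm{Ker}\,\varphi_{k,m,N}\subseteq\mathrm{im}\,\Phi_N$, a short diagram chase then gives the clean formula $\mathrm{Ker}\,\varphi_{k,m,N}=\Phi_N(K_{m,N})$, where $K_{m,N}=\ker r_{m,N}$.

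With this formula in hand the problem collapses. Since $\Phi_N$ is a homomorphism, $\mathrm{Ker}\,\varphi_{k,c,N}\cdot\mathrm{Ker}\,\varphi_{k,d,N}=\Phi_N(K_{c,N}K_{d,N})$ and $\mathrm{Ker}\,\varphi_{k,l,N}=\Phi_N(K_{l,N})$, so it suffices to prove the purely multiplicative identity $K_{c,N}K_{d,N}=K_{l,N}$ inside $(\mathcal{O}_k/N\mathcal{O}_k)^*$. By the Chinese Remainder Theorem $(\mathcal{O}_k/N\mathcal{O}_k)^*\cong\prod_{p\mid N}(\mathcal{O}_k/p^{v_p(N)}\mathcal{O}_k)^*$, and each $K_{m,N}$ is the product over $p$ of the kernel $K_{m,N}^{(p)}$ of the reduction to level $v_p(m)$. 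At a fixed $p$ these level-reduction kernels are totally ordered by inclusion, a larger exponent yielding a smaller kernel, so if, say, $v_p(c)\le v_p(d)$ then $K_{d,N}^{(p)}\subseteq K_{c,N}^{(p)}$ and, because $v_p(l)=\min(v_p(c),v_p(d))=v_p(c)$, the local product $K_{c,N}^{(p)}K_{d,N}^{(p)}$ equals $K_{c,N}^{(p)}=K_{l,N}^{(p)}$. Reassembling over all $p\mid N$ gives $K_{c,N}K_{d,N}=K_{l,N}$, and applying $\Phi_N$ completes condition 3).

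I expect the construction and functoriality of $\Phi_m$ — equivalently, the precise form of the standard exact sequence for the class groups of quadratic orders together with the commuting reduction ladder — to be the only delicate point; everything after the formula $\mathrm{Ker}\,\varphi_{k,m,N}=\Phi_N(K_{m,N})$ is elementary. It is worth flagging why this detour is necessary. The naive attempt to compute $\mathrm{Ker}\,\varphi_{k,c,N}\cap\mathrm{Ker}\,\varphi_{k,d,N}$ as $\mathrm{Ker}\,\varphi_{k,\mathrm{lcm}(c,d),N}$ and reduce condition 3) to a class-number identity $|Cl_{k,\mathrm{lcm}(c,d)}|\,|Cl_{k,(c,d)}|=|Cl_{k,c}|\,|Cl_{k,d}|$ already fails (for instance for $k={\mathbb Q}(\sqrt{-1})$ with $c=2$, $d=3$), because the unit index $[\,\mathcal{O}_k^{\,*}\!:\mathcal{O}_{k,m}^{\,*}\,]$ does not respect $\gcd$ and $\mathrm{lcm}$. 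Passing to $\Phi_N$ is exactly what absorbs the unit and rational contributions, so that only the transparent reduction-kernel identity $K_{c,N}K_{d,N}=K_{(c,d),N}$ remains.
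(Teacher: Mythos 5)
Your proof is correct, but it takes a genuinely different route from the paper's. The paper disposes of condition 3) by a bare-hands ideal construction: a class in $\mathrm{Ker}\,\varphi_{k,l,m}$ is represented by an invertible ideal $\mathfrak a$ with $\mathfrak a\,\mathcal O_{k,l}=\mathcal O_{k,l}$; choosing $u,v\in{\mathbb Z}^{+}$ with $uvl=m$, $(u,v)=1$, $c\,|\,ul$, $d\,|\,vl$, it sets $\mathfrak a_1=u\,\mathfrak a\,\mathcal O_{k,vl}+v\,\mathcal O_{k,ul}$ and $\mathfrak a_2=v\,\mathfrak a\,\mathcal O_{k,ul}+u\,\mathcal O_{k,vl}$, and verifies by direct computation that $\mathfrak a_1\mathfrak a_2=\mathfrak a$, $\mathfrak a_1\mathcal O_{k,c}=\mathcal O_{k,c}$ and $\mathfrak a_2\mathcal O_{k,d}=\mathcal O_{k,d}$; this is self-contained, uses nothing but multiplication of the orders $\mathcal O_{k,m}$, and covers $k={\mathbb Q}^2$ without any extra discussion. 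You instead transport the kernels into $({\mathcal O}_k/N{\mathcal O}_k)^*$ via the ring-class surjection $\Phi_N$ and prove $\mathrm{Ker}\,\varphi_{k,m,N}=\Phi_N(K_{m,N})$; your chase is valid, since $\mathrm{Ker}\,\varphi_{k,m,N}\subseteq\mathrm{Ker}\,\varphi_{k,1,N}=\mathrm{im}\,\Phi_N$ by condition 2) and exactness, and $r_{m,N}$ carries $\ker\Phi_N=E_NV_N$ onto $E_mV_m=\ker\Phi_m$; after that, $K_{c,N}K_{d,N}=K_{(c,d),N}$ is indeed immediate from CRT and the nestedness of reduction kernels at each prime. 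What your route buys is transparency: after localization the gcd identity is obvious, and your closing observation --- that for $k={\mathbb Q}(\sqrt{-1\,})$, $c=2$, $d=3$ the class numbers $h_2=1$, $h_3=2$, $h_6=4$, $h_1=1$ defeat the naive intersection/class-number approach --- correctly explains why some such device is needed. What it costs is exactly the input you flag as delicate: the exact sequence $1\to\mathcal O_{k,m}^*\to\mathcal O_k^*\to({\mathcal O}_k/m{\mathcal O}_k)^*/({\mathbb Z}/m{\mathbb Z})^*\to Cl_{k,m}\to Cl_k\to 1$, the kernel description $\ker\Phi_m=E_mV_m$, and the functoriality $\varphi_{k,m'\!,m}\circ\Phi_m=\Phi_{m'}\circ r_{m'\!,m}$. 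These facts are true, and the paper's own machinery essentially provides them: Lemmas 2.1--2.3 together with the sequence (4.3) yield the surjection of $({\mathcal O}_k/f{\mathcal O}_k)^*$ onto $\mathrm{Ker}(Cl_{k,f}\to Cl_k)$ with the stated kernel, the functoriality follows from the uniqueness clause in the Dedekind correspondence, and the split case $k={\mathbb Q}^2$ can be checked against Lemma A.2. But a complete write-up must prove or cite them --- note in particular that the five-term sequence alone does not identify $U(\mathcal O_k,\mathcal O_{k,m})$ with $({\mathcal O}_k/m{\mathcal O}_k)^*/({\mathbb Z}/m{\mathbb Z})^*$; one needs the prime-to-conductor ideal correspondence as well --- so your argument, while conceptually cleaner, ends up longer than the paper's short explicit computation once this scaffolding is included.
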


\begin{proof}
 It suffices to check condition ($3$) of the compatible inverse system.
 We claim that for  $c\,, \,d$
   and $m \in {\mathbb Z}^{+}$ with $c\,|\,m$ and $d\,|\,m\,$,
 \begin{eqnarray*}
  \mbox{Ker}\,\varphi_{k,c,m} \,
 \,\mbox{Ker}\,\varphi_{k,d,m} = \mbox{Ker}\,\varphi_{k,\,l,m} \,,
 \end{eqnarray*}
  where $l = ( \,c, d )$.
 Take any $\mathcal{O}_{k,m}$-ideal class  in $\mbox{Ker}\,\varphi_{k,l,m}$.
 Without loss of generality, we assume that it is represented by an
  invertible $\mathcal{O}_{k,m}$-ideal $\mathfrak a\,$ with
  $ {\mathfrak a}\, \mathcal{O}_{k,\,l}  = \mathcal{O}_{k,\,l}\,$.
 Choose  $u\,,v \in {\mathbb Z}^{+}$ with $c \,|\,  u l $ and $d \,|\,   v l $
  such that $(u\,,v)=1$ and $ u v\,l = m\,$.
 Put
\begin{eqnarray*}
 {\mathfrak a}_1 = u \,{\mathfrak a}\,\mathcal{O}_{k,vl}
  + v \, \mathcal{O}_{k,ul} \quad \mbox{and}
 \quad {\mathfrak a}_2
  = v \,{\mathfrak a}\,\mathcal{O}_{k,ul} + u \, \mathcal{O}_{k,vl} \,.
 \end{eqnarray*}
 It is clear that ${\mathfrak a}_1 $ and ${\mathfrak a}_2 $ are $\mathcal{O}_{k,m}$-ideals.
 They satisfy
\begin{eqnarray*}
 {\mathfrak a}_1 {\mathfrak a}_2 & = & u v \,{\mathfrak a}^2 \,\mathcal{O}_{k,\,l}
  + u^2  \,{\mathfrak a}\, \mathcal{O}_{k,vl}
 +  v^2  \,{\mathfrak a}\, \mathcal{O}_{k,ul} + u v \,\mathcal{O}_{k,\,l}
 \\
 & = & {\mathfrak a}\, ( \,u v  \,\mathcal{O}_{k,\,l}
   + u^2 \,{\mathbb Z}\,{\bf 1} + u^2 \,v l \,\mathcal{O}_{k}
   + v^2 \,{\mathbb Z}\,{\bf 1} + v^2 \,u l \,\mathcal{O}_{k} \,) \\
 & = & {\mathfrak a} \,(\,{\mathbb Z}\,{\bf 1}+ m\,\mathcal{O}_{k} \,) =
 {\mathfrak a}\,.
 \end{eqnarray*}
 This implies that $ {\mathfrak a}_1 $ and $ {\mathfrak a}_2 $ are invertible
  $\mathcal{O}_{k,m}$-ideals.
 Moreover, we have
\begin{eqnarray*}
 {\mathfrak a}_1 \mathcal{O}_{k,c}
  & = &  u \,{\mathfrak a} \,\mathcal{O}_{k,\,l} + v \,\mathcal{O}_{k,c} \\
  & = & u \,{\mathbb Z}\,{\bf 1} + u l  \mathcal{O}_{k}
    + v \,{\mathbb Z}\,{\bf 1} + v c \, \mathcal{O}_{k} \\
  & = & \mathcal{O}_{k,c}
 \end{eqnarray*}
 and similarly $ {\mathfrak a}_2 \mathcal{O}_{k,d} =\mathcal{O}_{k,d} \,$.
 Hence
 $[\,{\mathfrak a}\,] = [\,{\mathfrak a}_1\,]
  \,[ \,{\mathfrak a}_2\,] \in \mbox{Ker}\,\varphi_{k,c,m}\,
  \,\mbox{Ker}\,\varphi_{k,d,m}\, $.
 The other side  of the inclusion is trivial. \end{proof}

\section{Interpreting $\xi_{i}^{\vee} (s) $  in terms of  $L$-series
of Quadratic  Orders}
 Throughout the section, we let $k$ denote a quadratic  $\acute{\mbox{e}}$tale algebra,
  i.e., $k$ is either a quadratic field or $ {\mathbb Q} \,\varoplus \,{\mathbb Q}\,$.
 We define the quadratic order $\mathcal{O}_{k,m}$ and
  its class group $Cl_{k,m}$ as before.
 For $\alpha \in k\,$, we write
  $\alpha^{\tau}$ for the image of $\alpha $ under the unique
  non-trivial automorphism ${\tau}$ of $k\,$.
 When $k$ is a quadratic field, we let $\sqrt{\Delta_k} $ denote
  the positive square root of $\Delta_{k}$ if $\Delta_{k} > 0$ and
  the positive imaginary  square root if $\Delta_{k} < 0$.
 In the case $ k = {\mathbb Q}\, \varoplus \,{\mathbb Q}\,$, we use
  $\sqrt{\Delta_k} $ to denote the element $(-1,1)  \in k\,$, and
  identify $\mathbb Q$ as a subalgebra of $ k$ via the diagonal map
  $ a \mapsto (a\,,a)\,,\,\, \forall \,\,a \in \mathbb Q\,$.

\subsection*{Integral binary cubic forms with middle coefficients
 divisible by $3$ }
 In this section let $n$ be a non-zero integer.
 We want to count the number of $\Gamma$-orbits in
\begin{eqnarray*}
 L^{\vee}(n) = \{ \, x \in L^{\vee} \,| \,  \,\mbox{disc} (x) = - 27 \,n \,\}\,.
\end{eqnarray*}
 As was shown by Eisenstein \cite{Eisen} in the 19th century,
  binary cubic forms of this type
  are parameterized  by  ideal classes of
  their  quadratic resolvent rings.
 Eisenstein proved this  for certain maximal orders.
 Nakagawa   made   this  correspondence explicit in  \cite[Section 2\,]{Nak2}
  for orders of imaginary quadratic fields.
 Bhargava constructed  in his doctoral thesis \cite[Thm 13\,]{Baga}
  a bijective correspondence for general quadratic rings.
 He associated to each $\Gamma$-orbit in $L^{\vee}$
  an equivalent class of the triples $( S, I, \delta )$, where $S$ is an oriented
  quadratic ring, $I$ is an ideal of $S$, and $\delta$ is an
   element of $S \otimes {\mathbb Q}$
  satisfying certain compatibility conditions.
 Since the ideal $I$ is in most cases not $S$-invertible,
  one has  to re-group these triples to get to conditions on ideal class groups.
 In the following  we  give  a self-contained account of the
  parametrization, with proof  based essentially  on one
  equation---the refined syzygy identity $(3.4)$.
 We also state the
  bijection in an explicit and easy to use form.

 Put
\begin{eqnarray}
 \qquad k =\left\{
 \begin{array}{ll}
   {\mathbb Q}( \sqrt{\,n}\, ) \, , & \;
     \mbox{if $ n $ is  not the square of an integer ;} \\
   \, {\mathbb Q} \varoplus {\mathbb Q} \, , &  \;
       \mbox{otherwise\,.}
  \end{array}
  \right.
\end{eqnarray}
 For integers $ b\,,c \geq 1 \,$, set
\begin{eqnarray*}
  \; S_k(\,b\,,c\,) = \{  ( \,{ \mathfrak a } \,, \beta \, )
   \,\, | \,\,
  {\mathfrak a} \, \, \mbox{is an invertible } \mathcal{O}_{k,c}\mbox{\,-ideal}, \,
  \beta \in {\mathfrak a }^3 \mbox{ and }
  {\bf N}(\,\beta {\mathfrak a }^{-3}) = b \, \}.
\end{eqnarray*}
 We call two pairs  $( {\mathfrak a }_1 , \beta_1 )$ and $( {\mathfrak a }_{\,2} , \beta_2 )$
  of $S_k(\,b\,,c\,)$ equivalent, and write
  $( {\mathfrak a }_1 , \beta_1 ) \sim ( {\mathfrak a }_{\,2} , \beta_2 ) $,
  if there exists an element $\rho \in k^{\,*}$ such that
   $ \rho {\,\mathfrak a }_1 = {\,\mathfrak a }_{\,2} $ and $ \rho^3 \beta_1 = \beta_2 \,$.
 We write $ [ \, { \mathfrak a } \,, \beta \,] $ for the equivalence class
  of $( {\mathfrak a } , \beta )\,$ and put
\begin{eqnarray*}
 S (n) = \bigcup_{ ( \,b \,c \,)^2
 \,= \, n / \Delta_k  } S_k(\,b\,,c\,)\,/ \sim \quad (disjoint).
\end{eqnarray*}

 We now define a map $\Psi_n $ from $ S (n) $ to
   $\Gamma \,\backslash \,L^{\vee}(n) $ as follows.
 To each $( { \mathfrak a }\,, \beta ) \in  S_k(\,b\,,c\,) $
  with $( b \,c )^2 \Delta_k =  n$, choose a $ {\mathbb Z} $-basis
  $ \alpha_1 , \,\alpha_2 $ of ${\mathfrak a }$ with
\begin{eqnarray*}
 {\mbox{N}_{k /\,{\mathbb Q}} (\,\beta \,) } \,
  (\,\alpha_1^{\,}\,\alpha_2^{\,\tau} -\alpha_2^{\,}\,\alpha_1^{\,\tau}  )
   \,/ \sqrt{ \Delta_k } < 0 \,
\end{eqnarray*}
   and  put
\begin{eqnarray*}
  x(u,v)
   & = & \frac{1}{ \,c \sqrt{\Delta_k } \,\,{\bf N} { \mathfrak a }^{\,3} }\,
    \big( \,{ \beta }\,
      (\, \alpha_1^{\,\tau} u + \alpha_2^{\,\tau} v \,)^3
       -{\, \beta^{\,\tau} } ( \,\alpha_1 u + \alpha_2 v\, )^3 \,
    \big) \, \\
    & = & \mbox{Tr}_{k/{\mathbb Q}}\,
   \big( \,
    \frac{\beta}{ \,c \sqrt{\Delta_k }\,\,{\bf N} { \mathfrak a }^{\,3} }\,
     ( \,\alpha_1^{\,\tau} u + \alpha_2^{\,\tau} v \,)^3 \,
   \big) \, .
\end{eqnarray*}
 We claim that $ x(u,v) \in L^{\vee}(n)$ and the $\Gamma$-orbit of $ x(u,v) $
  depends only on  $ [ \, { \mathfrak a } \,, \beta \,] \,$.
 The condition $\beta \in {\mathfrak a }^{\,3} $ is equivalent to
   $\,\beta \,(\,{\mathfrak a }^{\tau})^{\,3}
   \subseteq  \,
     {\bf N} { \mathfrak a }^{\,3}  \,\mathcal{O}_{k,c} \,
   = c \,\sqrt{\Delta_k }\,
     {\bf N} { \mathfrak a }^{\,3}  \,\mathcal{O}_{k,c}^{\,\vee} \,$,
  or $ x(u,v) \in L^{\vee}$.
 Write $ x(u,v) = w \big( (u,v)\,g \big) $ with
\begin{eqnarray}
  w(u,v)= \frac{1}{c \sqrt{\Delta_k }\,
     \,{\bf N} { \mathfrak a }^{\,3}  \,}
     \,( \,{ \beta } \,u^3 - \beta^{\,\tau}  v^3 \,)\,
     \quad \mbox{and} \quad
   g = \left[
    \begin{array}{cc}
      \alpha_1^{\,\tau} & \alpha_1  \\
      \alpha_2^{\,\tau} & \alpha_2
    \end{array} \right],
\end{eqnarray}
 we see that
   $ \mbox{disc}(x) =( \mbox{det} g )^6 \,\mbox{disc}(w)= -27\,( b\,c)^2 \Delta_k \,$.
 Moreover, if $( { \mathfrak a }_1 , \beta_1 ) $ is another representative for
   $ [ \, { \mathfrak a } \,, \beta \,] \,$, say
   $ {\mathfrak a }_1 = \rho \,{ \mathfrak a }$ and $ \beta_1 = \rho^{\,3} \beta $
   for some $ \rho \in k^{\,*}$, and
   $ \omega_1 ,\, \omega_2 $ is a $ {\mathbb Z} $-basis of ${\mathfrak a }_1$ with
   $ \,(\, \omega_1 \,\omega_2^{\,\tau} - \omega_2 \,\omega_1^{\,\tau} ) \,
    {\mbox{N}_{k /{\mathbb Q}}  (\,\beta_1) }\, / \sqrt{ \Delta_k} < 0 \, $.
 Then there exists a $\gamma \in \Gamma$ such that
   $ (\,\omega_1 , \omega_2 ) = (\, \rho\, \alpha_1 ,\rho \,\alpha_2)\,\gamma\, $,
  and so
\begin{eqnarray*}
  \mbox{Tr}_{k/{\mathbb Q}}\,
    \big( \,
     \frac{\beta_1}{c  \sqrt{\Delta_k} \,\,{\bf N} { \mathfrak a }_1^{\,3}}\,
      ( \,\omega_1^{\,\tau} u \,+\, \omega_2^{\,\tau} v \,)^3 \,
    \big)
   =  \pm \,x\big( (u,v) \gamma^{T} \big)
\end{eqnarray*}
  is  in the  $\Gamma$-orbit of $x(u,v)\,$.
 We let  $\Psi_n  $ be the map   sending the equivalence class
  $  [ \, {\mathfrak a } \,, \beta \,] $ to the $\Gamma$-orbit of $ x(u,v)\, $.
\begin{theorem}
 The map $\Psi_n : S (n) \rightarrow \Gamma \,\backslash \,L^{\vee}(n) $ is a bijection.
\end{theorem}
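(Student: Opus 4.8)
The plan is to construct an explicit inverse $\Phi_n$ of $\Psi_n$ using the Hessian covariant, and to check $\Psi_n \circ \Phi_n = \mbox{id}$ and $\Phi_n \circ \Psi_n = \mbox{id}$ by means of a single algebraic relation---the refined syzygy identity $(3.4)$ linking $x$, its Hessian, and its Jacobian cubic covariant. I would begin by recording the Hessian
\[
 H_x(u,v) = (x_1^2 - 3x_0x_2)\,u^2 + (x_1x_2 - 9x_0x_3)\,uv + (x_2^2 - 3x_1x_3)\,v^2 .
\]
When $x \in L^{\vee}$, i.e.\ $x_1,x_2 \in 3{\mathbb Z}$, every coefficient of $H_x$ is divisible by $9$, and $\mbox{disc}(H_x) = -3\,\mbox{disc}(x) = 81\,n$. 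Thus on $L^{\vee}(n)$ we may write $H_x = \pm\,9b\,Q$ with $Q$ a primitive integral binary quadratic form and $b \in {\mathbb Z}^{+}$; comparing discriminants gives $\mbox{disc}(Q) = c^2\Delta_k$ with $(b\,c)^2\Delta_k = n$. Via the classical dictionary between primitive binary quadratic forms of discriminant $c^2\Delta_k$ and invertible $($proper$)$ ideal classes of the quadratic order $\mathcal{O}_{k,c}$, the form $Q$ determines an invertible $\mathcal{O}_{k,c}$-ideal ${\mathfrak a}$, unique up to $\Gamma$-change of basis and homothety; concretely $H_x$ is, up to the scalar $\pm 9b$, the norm form $\mbox{N}_{k/{\mathbb Q}}(\alpha_1 u + \alpha_2 v)/{\bf N}{\mathfrak a}$ attached to a ${\mathbb Z}$-basis $\alpha_1,\alpha_2$ of ${\mathfrak a}$.

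The refined syzygy identity $(3.4)$ I would establish by computing the Hessian and Jacobian of the reduced form $w$ in $(3.2)$ and transporting them through $g$: one finds $H_w = 9\,\mbox{N}_{k/{\mathbb Q}}(\beta)\,D^{-2}\,uv$ with $D = c\sqrt{\Delta_k}\,{\bf N}{\mathfrak a}^{3}$, together with a companion formula for the Jacobian $J_w$. After applying $g$ these identities say that $x$ recovers $\beta(\alpha_1^{\tau}u+\alpha_2^{\tau}v)^3 - \beta^{\tau}(\alpha_1 u+\alpha_2 v)^3$ while $J_x$ recovers the complementary $\tau$-invariant combination, so the two together pin down $\beta(\alpha_1^{\tau}u+\alpha_2^{\tau}v)^3$ and hence $\beta \in k^{*}$ once the linear form $\alpha_1 u + \alpha_2 v$ (equivalently ${\mathfrak a}$) is known from $H_x$. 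The same identity exhibits $\beta \in {\mathfrak a}^3$ (which is exactly the condition $x \in L^{\vee}$) and ${\bf N}(\beta{\mathfrak a}^{-3}) = b$, so that $({\mathfrak a},\beta) \in S_k(b,c)$; the sign condition on $\mbox{N}_{k/{\mathbb Q}}(\beta)(\alpha_1\alpha_2^{\tau}-\alpha_2\alpha_1^{\tau})/\sqrt{\Delta_k}$ selects the correct orientation. I then set $\Phi_n(x) = [\,{\mathfrak a},\beta\,] \in S(n)$.

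That $\Psi_n\circ\Phi_n$ is the identity is a direct substitution into the defining formula for $\Psi_n$, again using $(3.4)$. For $\Phi_n\circ\Psi_n = \mbox{id}$, equivalently injectivity of $\Psi_n$, I would use that $H$ and $J$ are $\Gamma$-covariants, so a relation $x_2 = \gamma\cdot x_1$ with $\gamma \in \Gamma$ forces $H_{x_2} = \gamma\cdot H_{x_1}$ and $J_{x_2} = \gamma\cdot J_{x_1}$. Hence the primitive parts $Q_1,Q_2$ of the Hessians are $\Gamma$-equivalent and define the same oriented invertible ideal class, giving ${\mathfrak a}_2 = \rho\,{\mathfrak a}_1$ for some $\rho \in k^{*}$; feeding this back into the syzygy identity forces $\beta_2 = \rho^3\beta_1$, i.e.\ $[\,{\mathfrak a}_1,\beta_1\,] = [\,{\mathfrak a}_2,\beta_2\,]$. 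Since well-definedness of $\Psi_n$ was already checked, these two computations establish the bijection.

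The main obstacle is twofold. First, the refined syzygy identity $(3.4)$ must be proved by an exact computation in which every power of $3$ and every factor $\sqrt{\Delta_k}$ is tracked without slippage; this is routine but unforgiving, and it is precisely here that the scaling built into $L^{\vee}$ and into $\Psi_n$ is essential---in particular the divisibility of the coefficients of $H_x$ by $9$ that makes $b = (\mbox{content of }H_x)/9$ an integer. Second, and more delicate, is the non-generic locus: when $k = {\mathbb Q}\varoplus{\mathbb Q}$ (that is, $n$ a perfect square) the Hessian $H_x$ splits over ${\mathbb Q}$ and the form--ideal dictionary must be read through the split order, while reducible $x$ and the possibility $\abs{\Gamma_x}=3$ correspond to homotheties $\rho$ fixing the pair $({\mathfrak a},\beta)$ and must be seen to be consistent with the equivalence relation defining $S(n)$. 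Making the correspondence $Q \leftrightarrow {\mathfrak a}$ and the orientation conventions bijective uniformly across these degenerate cases, rather than only for irreducible cubics over a quadratic field, is where the real care is required.
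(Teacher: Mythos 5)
Your construction is the paper's proof in all essentials: $b$ and $c$ come from the content of the Hessian, $\mathfrak{a}$ from its primitive part (the paper writes this ideal down explicitly as ${\mathbb Z}\,{\bf 1}+{\mathbb Z}\,\alpha$ with $\alpha = (b_1+c\sqrt{\Delta_k})/(2b_0)$, which is exactly your form--ideal dictionary), $\beta$ from the Jacobian via the refined syzygy identity $(3.4)$, and $\Gamma$-covariance of $H$ and $J$ gives injectivity; your injectivity paragraph is sound as stated, since there the transport identities are applied to forms already known to lie in the image. The gap is in surjectivity, and it is a circularity. You propose to \emph{establish} $(3.4)$ ``by computing the Hessian and Jacobian of the reduced form $w$ in $(3.2)$ and transporting them through $g$.'' But when you come to define $\Phi_n$ at an arbitrary $x \in L^{\vee}(n)$, no presentation $x = w\big((u,v)g\big)$ is available --- the existence of such a presentation is precisely what surjectivity asserts. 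Consequently the step in which ``the two identities pin down $\beta(\alpha_1^{\tau}u+\alpha_2^{\tau}v)^3$ and hence $\beta$'' is unjustified for a general $x$: you need to know in advance that $\frac{1}{b}J_x + c\sqrt{\Delta_k}\,x$ is a $k^{*}$-multiple of the cube of the linear form cut out by $H_x$, and your transport computation proves this only for forms in the image of $\Psi_n$.

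Two repairs are possible. The paper's repair: prove $(3.4)$ as a universal polynomial identity in the coefficients of an \emph{arbitrary} cubic with $B_0 \neq 0$ (arranged by a preliminary $\Gamma$-translation), in which $d\sqrt{\Delta_k}$ is simply a square root of $\mbox{disc}(H)$; then $\alpha = (b_1+c\sqrt{\Delta_k})/(2b_0)$ and $\beta = (C_0+d\sqrt{\Delta_k}\,x_0)/(2b\,b_0^{\,3})$ are read off from the identity, and one verifies $\beta \in \mathfrak{a}^3$, ${\bf N}(\beta\,\mathfrak{a}^{-3})=b$ and the sign condition. This may be what your ``exact computation'' remark intends, but then the transport derivation must be dropped from the logical chain, as it cannot precede surjectivity. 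Alternatively, keep the transport derivation but first prove the missing lemma that a binary cubic of nonzero discriminant whose Hessian is proportional to $\ell_1\ell_2$ is necessarily of the form $c_1\ell_1^3+c_2\ell_2^3$: in coordinates where $H \propto uv$, vanishing of the $u^2$- and $v^2$-coefficients of $H$ gives $y_1^2 = 3y_0y_2$ and $y_2^2 = 3y_1y_3$ for the cubic $y_0u^3+y_1u^2v+y_2uv^2+y_3v^3$, and if $y_1 y_2 \neq 0$ these force the $uv$-coefficient $y_1y_2-9y_0y_3$ to vanish as well, i.e.\ $H \equiv 0$, a contradiction; hence $y_1=y_2=0$. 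With either repair in place, the rest of your argument --- including the care you flag for the split case $k = {\mathbb Q}\varoplus{\mathbb Q}$ and the orientation conventions --- goes through as in the paper.
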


\begin{proof}
 We   prove the  subjectivity of   $\Psi_n$  by using a refined syzygy identity.
 Let $k$ denote the quadratic $\acute{\mbox{e}}$tale algebra defined by (3.1) and
  let $d $ be the positive integer satisfying  $ d^{\,2} \Delta_k = n \,$.
 Take any
  $ x(u,v) = x_0 u^3 + 3 \,x_1 u^2 v + 3 \,x_2 u v^2 + x_3 v^3 \in L^{\vee}(n) \,$.
 Let
\begin{eqnarray}
  H(u,v)  =  - \frac{1}{36} \left| \begin{array}{cc}
  \frac{ \partial ^2 x }{ \partial u ^2 } & \frac{ \partial ^2 x }{ \partial u \partial v }
  \\[5pt]
  \frac{ \partial ^2 x }{ \partial v \partial u } & \frac{ \partial ^2 x }{ \partial v ^2 }
 \end{array} \right|  \quad
 \mbox{and} \quad  J(u,v)  =  \frac{1}{3} \left| \begin{array}{cc}
  \frac{ \partial  x }{ \partial u  } & \frac{ \partial x }{  \partial v }
  \\[5pt]
  \frac{ \partial H }{ \partial u } & \frac{ \partial H }{ \partial v  }
 \end{array} \right|
 \end{eqnarray}
 denote respectively the Hessian  and the Jacobian covariant form of $x(u,v)\,$.
 Thus
   $ H(u,v)  = B_0 u ^2  + B_1 u v + B_2 v ^2\,$, where
\begin{eqnarray*}
  B_0  =  x_1^2 - x_0 x_2 \, , \quad  B_1  =  x_1 x_2 - x_0 x_3 \, ,
  \quad B_2  =  x_2^2 - x_1 x_3 \, ,
\end{eqnarray*}
  and $ J(u,v)  = C_0 u ^3  + 3\, C_1 u^2 v + 3\, C_2 u v^2 + C_3 v ^3\, $, where
\begin{eqnarray*}
  C_0  =  x_0 B_1 - 2 \,x_1 B_0 \, , \quad  C_1  =  - x_1 B_1 + 2 \,x_0 B_2 \,, \\
   C_2  =  x_2 B_1 - 2 \,x_3 B_0 \, , \quad C_3  =  - x_3 B_1 + 2 \,x_2 B_2 \,.
 \end{eqnarray*}
 Apply suitable $\Gamma$-action on $x(u,v)$ if necessary, we may assume that  $B_0 \neq 0 \,$.
 These covariant forms satisfy the syzygy relation
\begin{eqnarray*}
   J^2 - \mbox{disc} (H) \,x^2 = 4 H^3
 \end{eqnarray*}
  with $\mbox{disc} (H) = -\, \mbox{disc} (x)/27 = n \,$.
 What we need is the following  refined identity, which can be checked straightforwardly\,:
\begin{eqnarray}
   J + d  \sqrt{\Delta_k }\, x = (\, C_0 + d  \sqrt{\Delta_k }\, x_0
   ) \,(\, u + \frac{ B_1 - d  \sqrt{\Delta_k }\,}{2 B_0}\, v \,)^3\,.
\end{eqnarray}
 Following \cite{Nak2} we let $ b =  \gcd( B_0 , B_1 , B_2 ) $
 denote the content of $H(u,v)$ and put
\begin{eqnarray*}
  h(u,v)  = \frac{1}{b}\, H(u,v)  = b_0 u ^2  + b_1 u v + b_2 v ^2 \,.
\end{eqnarray*}
 Then $\mbox{disc}(h) = c^{\,2} \Delta_k $ for some positive integer $c\,$ with $b\,c=d\,$.
 Put
\begin{eqnarray*}
 \alpha = \frac{ b_1 + c  \sqrt{\Delta_k }\,}{2 \, b_0}
  \quad \mbox{and} \quad
 \beta  =  \frac{  C_0 + d  \sqrt{\Delta_k } \,x_0 \,}{2 \,b \, b_0^{\,3}\, } \,.
\end{eqnarray*}
 It is easy to check, for example using \cite [Thm 5.12]{Gao}\,,
  that ${\mathfrak a }= {\mathbb Z}\, {\bf 1} + {\mathbb Z}\, \alpha \,\,$
  is an invertible fractional ideal of
  $ \,\mathcal{O}_{k,c} = {\mathbb Z } \,{\bf 1} + {\mathbb Z } \,b_{\,0} \,\alpha \,$
  with
  ${\mathfrak a }^{-1} = {\mathbb Z}\,b_{\,0}  {\bf 1}
   + {\mathbb Z}\,( b_{\,0}\, \alpha + b_{\,1} {\bf 1} ) \,$.
 Moreover, we have ${\bf N} { \mathfrak a } = \abs{\,b_{\,0}\,}^{-1}$ and
  $\,\mbox{N}_{k /{\mathbb Q}} (\,\beta \,)   = b \,/\,
  b_{\,0}^{\,3}\,$.
 Thus  $\,{\,\bf N} (\,\beta \, { \mathfrak a }^{-3}) = b \,$ and
   $\,( \,\alpha^{\tau} - \alpha \,)\,
    \mbox{N}_{k /{\mathbb Q}} (\,\beta \,) \, /\sqrt{\Delta_k } < 0 \,$.
 Now $(3.4)$ can be written as
\begin{eqnarray*}
  J + d  \sqrt{\Delta_k }\, x
  = 2\, b \, b_{\,0}^{\,3}\, \beta \, (\,u + \alpha^{\tau} v )^3 \,.
\end{eqnarray*}
 So we have
\begin{eqnarray*}
x(u,v)  =  \mbox{Tr}_{k/{\mathbb Q}}\, \big( \,
  \frac{\beta}{ \,c \sqrt{\Delta_k }\,\,{\bf N} { \mathfrak a }^{\,3} }\,
  ( \, u + \alpha^{\tau} v \,)^3 \, \big) \,.
\end{eqnarray*}
 The condition $ x(u,v) \in L^{\vee}$ implies that  $\beta \in {\mathfrak a }^{\,3} $.
 Hence $   ( { \mathfrak a } \,, \beta ) \in S_k(\,b\,,c\,) $ and
  the map $\Psi_n$  sends $ [\, { \mathfrak a } \,, \beta \,] $
  to the $\Gamma$-orbit of $ x(u,v) \,$.

 The injectivity of $\Psi_n$ follows from  the covariance property of $H(u,v)$ and $J(u,v)\,$.
 Suppose $\Psi_n$ maps the equivalence class of $( { \mathfrak a } \,, \beta )\in S_k(\,b\,,c\,) $
  to the $\Gamma$-orbit of $ x(u,v) \in L^{\vee}(n)\,$.
 Without loss of generality, we assume  ${\mathfrak a }$ has a $ {\mathbb Z} $-basis
  $ \alpha_1 , \,\alpha_2 $  with
  $\, ( \,\alpha_1 \,\alpha_2^{\tau} - \alpha_2 \,\alpha_1^{\tau}\, )\,
   { \mbox{N}_{k /{\mathbb Q}} (\,\beta \,)}\, / \sqrt{ \Delta_k } < 0  $ such that
\begin{eqnarray*}
  \mbox{Tr}_{k/{\mathbb Q}}\,
    \big( \,
  \frac{\beta}{ \,c \sqrt{\Delta_k }\,\,{\bf N} { \mathfrak a }^{\,3} }\,
  ( \,\alpha_1^{\tau} u + \alpha_2^{\tau} v \,)^3 \,
    \big) =   x(u,v) \,.
 \end{eqnarray*}
 Let $w(u,v)$ and $g$ be defined by $(3.2)$.
 Since the Hessian form of $w(u,v)\,$  is
 \begin{eqnarray*}
   H_w(u,v) =\,\mbox{sgn} ( \,\mbox{N}_{k /{\mathbb Q}} (\,\beta \,) \,)
   \frac{b^3}{\,n\,{\,\bf N} {\mathfrak a }^3 } \, u \,v\,,
\end{eqnarray*}
 the Hessian form of $x(u,v) = w \big( (u,v)\,g \big)\,$ is given by
\begin{eqnarray*}
 \qquad H_x(u,v) = \,( \mbox{det} g )^2 \,  H_w \big( (u,v)\,g \big)
   =\,\mbox{sgn} ( \,\mbox{N}_{k /{\mathbb Q}} (\,\beta \,) \,)
   \frac{b}{{\,\bf N} {\mathfrak a } } \,{\bf N} ( \,\alpha_1 u + \alpha_2 v\,) \,.
\end{eqnarray*}
 Hence $b$ is the content of $H_x(u,v)\,$.
 Consequently, $b$ and $c$ are uniquely determined by the $\Gamma$-orbit of $x(u,v)\,$.
 Similarly, using  the  covariance property of the Jacobian  form $J_x(u,v)$ of $x(u,v)\,$, we get
\begin{eqnarray*}
 \frac{1}{\,b}\, J_x(u,v) +  c \sqrt{\Delta_k } \,x(u,v)
  = \frac{2}{\,{\bf N} { \mathfrak a }^{\,3}}
  \, \beta\,( \,\alpha_1^{\tau} u + \alpha_2^{\tau} v \,)^3 .
\end{eqnarray*}
 From this we deduce that $   [\, { \mathfrak a } \,, \beta \,] $ is
  uniquely determined by the $\Gamma$-orbit of $ x(u,v)\,$.

\end{proof}

\subsection*{Transition to $L$-series of quadratic orders}
 For each pair of positive integers $b$ and $c\,$, we put
\begin{eqnarray*}
 \Omega_{k}(\,b\,,c\,)
  = \{ \,\, \mbox{integral invertible $ \mathcal{O}_{k,c}$-ideal}\,
  \, {\mathfrak b}
  \,\, | \,\,
  {\bf N}{ \mathfrak b }= b\,,\, [ \,{ \mathfrak b }\, ] \in
 Cl_{k,c}^{\,3}
 \,\} \,.
\end{eqnarray*}
 Take any $(\, {\mathfrak a } \,, \beta \, )\in S_k(\,b\,,c\,) \,$.
 Clearly $\,\beta \, { \mathfrak a }^{-3} $ is an integral ideal of
  $\mathcal{O}_{k,c} $ contained in $ \Omega_{k}(\,b\,,c\,)\,$
  which depends only on the equivalence class of $( \,{\mathfrak a } \,, \beta \,)\,$.
 We let $\Phi$ denote the map
  from $S_k(\,b\,,c\,) /\sim \,$ to $\,\Omega_{k}(\,b\,,c\,)$
  sending  $[\, {\mathfrak a } \,,\beta\,]\,$ to $\,\beta \,{ \mathfrak a }^{-3}$.
 Furthermore, put
\begin{eqnarray*}
 Cl_{k,c}^{\,(3)} = \{ \, [\,{\mathfrak a }\,] \in Cl_{k,c}
 \,|\,[\,{\mathfrak a }\,]^{\,3} = 1 \, \} \,.
\end{eqnarray*}

\begin{lemma}
 Under the map $\,\Phi \,$,
  each integral $ \mathcal{O}_{k,c}$-ideal in $\,\Omega_{k}(\,b\,,c\,)$
  has exactly
  $|\,Cl_{k,c}^{\,(3)}\,| \,|\,\mathcal{O}_{k,c}^{\,*}\, / \mathcal{O}_{k,c}^{\,*\,3}\,| $
  inverse images.
\end{lemma}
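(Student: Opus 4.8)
The map $\Phi$ sends $[\mathfrak{a},\beta]$ to the integral ideal $\mathfrak{c} = \beta\,\mathfrak{a}^{-3}$, and the assertion is that every fiber has the same cardinality $|Cl_{k,c}^{(3)}|\,|\mathcal{O}_{k,c}^{*}/\mathcal{O}_{k,c}^{*3}|$. My plan is to fix a target ideal $\mathfrak{c} \in \Omega_k(b,c)$ and parametrize its preimage directly. A pair $(\mathfrak{a},\beta)$ maps to $\mathfrak{c}$ precisely when $\mathfrak{a}$ is an invertible $\mathcal{O}_{k,c}$-ideal, $\beta \in \mathfrak{a}^3$, ${\bf N}(\beta\,\mathfrak{a}^{-3}) = b$, and $\beta\,\mathfrak{a}^{-3} = \mathfrak{c}$. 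The last equation forces $\beta\,\mathcal{O}_{k,c} = \mathfrak{c}\,\mathfrak{a}^3$, so once $\mathfrak{a}$ is chosen, $\beta$ is determined up to the unit group $\mathcal{O}_{k,c}^{*}$ (any two generators of the same principal ideal differ by a unit). The first step is therefore to understand which ideal classes $[\mathfrak{a}]$ can occur: since $\beta\,\mathfrak{a}^{-3} = \mathfrak{c}$ is \emph{principal} as an ideal equation (being of the form $\beta\mathcal{O}_{k,c} = \mathfrak{c}\mathfrak{a}^3$ with $\beta \in k^*$), we need $[\mathfrak{c}][\mathfrak{a}]^3 = 1$ in $Cl_{k,c}$, i.e. $[\mathfrak{a}]^3 = [\mathfrak{c}]^{-1}$. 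Because $\mathfrak{c} \in \Omega_k(b,c)$ means $[\mathfrak{c}] \in Cl_{k,c}^3$, such $[\mathfrak{a}]$ exists; the set of admissible classes is a coset of $Cl_{k,c}^{(3)}$ and hence has exactly $|Cl_{k,c}^{(3)}|$ elements.

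Next I would pass to the level of equivalence classes under $\sim$. The relation $(\mathfrak{a},\beta)\sim(\rho\mathfrak{a},\rho^3\beta)$ with $\rho \in k^*$ acts on the preimage; I need to count $\sim$-classes rather than raw pairs, since the fiber of $\Phi$ lives in $S_k(b,c)/\!\sim$. The plan is to mod out in two stages. First, the action of $\rho \in k^*$ lets me replace $\mathfrak{a}$ by any ideal in its class, so I may normalize the \emph{ideal class} $[\mathfrak{a}]$ to run over the $|Cl_{k,c}^{(3)}|$ admissible classes, and within each class fix one representative ideal $\mathfrak{a}_0$; the residual freedom is $\rho \in k^*$ with $\rho\mathfrak{a}_0 = \mathfrak{a}_0$, i.e. $\rho \in \mathcal{O}_{k,c}^{*}$ (using invertibility of $\mathfrak{a}_0$). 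Second, with $\mathfrak{a}_0$ fixed, $\beta$ ranges over generators of the principal ideal $\mathfrak{c}\,\mathfrak{a}_0^3$, a nonempty $\mathcal{O}_{k,c}^{*}$-torsor, while the residual $\rho \in \mathcal{O}_{k,c}^{*}$ acts on $\beta$ by $\beta \mapsto \rho^3\beta$. Hence the number of $\beta$ up to this residual action is the number of $\mathcal{O}_{k,c}^{*}$-orbits under the cubing map, which is $|\mathcal{O}_{k,c}^{*}/\mathcal{O}_{k,c}^{*3}|$.

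Multiplying the two independent contributions gives exactly $|Cl_{k,c}^{(3)}|\cdot|\mathcal{O}_{k,c}^{*}/\mathcal{O}_{k,c}^{*3}|$, as claimed. The main obstacle I anticipate is the bookkeeping in the second stage: I must verify that the norm condition ${\bf N}(\beta\mathfrak{a}^{-3}) = b$ is automatically satisfied once $\beta\mathfrak{a}^{-3} = \mathfrak{c}$ with ${\bf N}\mathfrak{c} = b$ (so it imposes no extra constraint), and more delicately that the two quotient operations genuinely decouple — that is, that different admissible classes $[\mathfrak{a}]$ cannot be identified by the $\sim$-relation with one another (they cannot, since $\sim$ preserves the ideal class $[\mathfrak{a}]$) and that the count $|\mathcal{O}_{k,c}^{*}/\mathcal{O}_{k,c}^{*3}|$ is the same for each class. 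A careful setup choosing a single representative $\mathfrak{a}_0$ per admissible class makes this transparent, and since every admissible class contributes the same unit-quotient factor, the fiber cardinality is independent of the choice of $\mathfrak{c}$, which is precisely the content of the lemma.
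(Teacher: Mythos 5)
Your proposal is correct and takes essentially the same route as the paper's proof: both fix the target ideal $\mathfrak{c}$, observe that the admissible ideal classes $[\mathfrak{a}]$ form a coset (torsor) of $Cl_{k,c}^{\,(3)}$ giving the factor $|\,Cl_{k,c}^{\,(3)}\,|$, and then, with one representative $\mathfrak{a}_0$ fixed per class, count the generators $\beta$ of the principal ideal $\mathfrak{c}\,\mathfrak{a}_0^{\,3}$ modulo the residual unit action $\beta \mapsto \rho^{3}\beta$, giving the factor $|\,\mathcal{O}_{k,c}^{\,*}/\mathcal{O}_{k,c}^{\,*\,3}\,|$. The only difference is expository: you spell out the decoupling of the two counts and the automatic nature of the norm and integrality conditions, points the paper leaves implicit.
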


\begin{proof}
 By definition, for each  $ \mathfrak b \in  \Omega_{k}(\,b\,,c\,)\,$, there  exists an
  invertible $ \mathcal{O}_{k,c} $-ideal $ \mathfrak a $ and an
  element $\beta \in k^{*} $ with $( \,{\mathfrak a } \,, \beta \,)\in S_k(\,b\,,c\,) $
  such that $\mathfrak b = \beta \, { \mathfrak a }^{-3}$.
 Furthermore, if $( \,{\mathfrak a }_1 , \beta_1 \,)\in S_k(\,b\,,c\,) $
  is a pair satisfying $\,\beta_1^{\,} {\mathfrak a }_1^{-3} = {\mathfrak b}\,$,
  then  $ [\,{\mathfrak a }_1 / {\mathfrak a }\,]^{\,3} = 1 \, $.
 So we have $|\,Cl_{k,c}^{\,(3)}\,|$ choices for the ideal class of ${\mathfrak a }_1 \,$.
 Once an ideal class is chosen and a representative ${\mathfrak a }_1 $ is fixed,
  $\,\beta_1 $ is determined by ${\mathfrak b}$ up to a unit in $\mathcal{O}_{k,c}\,$,
  hence the equivalence class  $[\, {\mathfrak a }_1 , \beta_1 \,]$ is
  determined by ${\mathfrak b}$ up to
  $ \,|\, \mathcal{O}_{k,c}^{\,*} / \mathcal{O}_{k,c}^{\,*\,3}\,|  $ possibilities.
\end{proof}

\begin{theorem}
 With the notation above, we have
\begin{eqnarray*}
 \quad \xi_{1}^{\vee} (s) \,
   =  \sum_{ k \;\mbox{\footnotesize{imaginary quadratic}}}
    \frac{1}{\,|\,\Delta_k \,|^{s } }\, \,\,
   \sum_{c \,=\, 1}^{\infty} \frac{ 1 }{\,\, c^{\,2 s}\, }
   \sum_{ \chi \,\in\, Cl_{k,c}^{\vee}\,,\, \chi^3 = 1 }
     L( \,2 s \, , \chi \,, \mathcal{O}_{k,c}\, )\,
\end{eqnarray*}
 \,\, and
\begin{eqnarray*}
\xi_{2}^{\vee} (s) \,\,
    =   \sum_{ k \;\mbox{\footnotesize{real quadratic}} }
    \frac{3}{\,|\,\Delta_k \,|^{s } }\, \,
  \sum_{c \,=\, 1}^{\infty} \frac{ 1 }{\,\, c^{\,2 s}\, }
  \sum_{ \chi \,\in \,Cl_{k,c}^{\vee}\,,\, \chi^3 = 1 }
    L( \,2 s \, , \chi \,, \mathcal{O}_{k,c} \,) \\
   \qquad \qquad \qquad \qquad
 + \,\, \sum_{c \,=\, 1}^{\infty} \frac{ 1 }{\,\, c^{\,2 s}\, }
  \sum_{ \chi \,\in \, Cl_{ {\mathbb Q}^2 , c}^{\vee}\,,\, \chi^3 = 1 }
  L( \,2 s \, , \chi \,, \mathcal{O}_{{{\mathbb Q}^2},c} \,)\,,
\end{eqnarray*}
 where in the inner sums above   $\chi$ goes through
  all characters  of $Cl_{k,c} $  $($and $Cl_{ {\mathbb Q}^2 , c}$$)$ satisfying $\chi ^3 = 1 \,$.
\end{theorem}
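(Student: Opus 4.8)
The plan is to expand both sides as Dirichlet series in the single variable $n=-\,\mathrm{disc}(x)/27$ and to match them coefficient by coefficient, the sign of $n$ controlling whether $k$ is imaginary quadratic (for $\xi_1^{\vee}$) or real/split (for $\xi_2^{\vee}$). On the arithmetic side I first unfold the inner character sum by orthogonality. The characters $\chi$ of $Cl_{k,c}$ with $\chi^3=1$ are exactly those trivial on the cubes $Cl_{k,c}^{\,3}$, and there are $|Cl_{k,c}^{\,(3)}|$ of them, so $\sum_{\chi^3=1}\chi([\mathfrak b])$ equals $|Cl_{k,c}^{\,(3)}|$ when $[\mathfrak b]\in Cl_{k,c}^{\,3}$ and $0$ otherwise. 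Writing $L(2s,\chi,\mathcal{O}_{k,c})$ as a sum over integral invertible ideals, collecting by norm, and interchanging summation therefore gives $\sum_{\chi^3=1}L(2s,\chi,\mathcal{O}_{k,c})=|Cl_{k,c}^{\,(3)}|\sum_{b\ge 1}|\Omega_k(b,c)|\,b^{-2s}$, since $\Omega_k(b,c)$ is precisely the set of norm-$b$ integral invertible $\mathcal{O}_{k,c}$-ideals with class in $Cl_{k,c}^{\,3}$. Each summand on the right of the asserted identities thus reduces to $|\Delta_k|^{-s}\,|Cl_{k,c}^{\,(3)}|\,|\Omega_k(b,c)|\,(bc)^{-2s}$, and I regroup the triple sum over $(k,b,c)$ by the value $n=(bc)^2\Delta_k$, whereupon $|\Delta_k|^{-s}(bc)^{-2s}$ collapses to $|n|^{-s}$.

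Next I transport everything to the space of cubic forms. For fixed $n$ the algebra $k$ is the one attached to $n$ by $(3.1)$, and the factorizations $bc=\sqrt{n/\Delta_k}$ are exactly the index set of the disjoint union $S(n)=\bigsqcup_{(bc)^2=n/\Delta_k}S_k(b,c)/{\sim}$; so the regrouped arithmetic sum matches the internal structure of $S(n)$ with no over- or under-counting. In particular the split algebra $\mathbb{Q}^2$ with $\Delta_k=1$ accounts for exactly the square values of $n>0$, disjointly from the real-quadratic contributions (for which $\Delta_k>1$ is never a square). Lemma $3.2$ rewrites $|Cl_{k,c}^{\,(3)}|\,|\Omega_k(b,c)|=|S_k(b,c)/{\sim}|\,/\,|\mathcal{O}_{k,c}^{\,*}/\mathcal{O}_{k,c}^{\,*3}|$, while Theorem $3.1$ identifies $S(n)$ bijectively with $\Gamma\backslash L^{\vee}(n)$.

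It remains to match the weights, which is the heart of the argument. The same covariance computation that yields injectivity in the proof of Theorem $3.1$ identifies the stabilizer $\Gamma_x$ of $x=\Psi_n([\mathfrak a,\beta])$ with the self-equivalences of the pair, namely $\Gamma_x\cong\{\rho\in k^{*}:\rho\,\mathfrak a=\mathfrak a,\ \rho^3\beta=\beta\}=\{\rho\in\mathcal{O}_{k,c}^{\,*}:\rho^3=1\}$. I then evaluate $|\mathcal{O}_{k,c}^{\,*}/\mathcal{O}_{k,c}^{\,*3}|$ case by case, using that this index equals $|\{\rho\in\mathcal{O}_{k,c}^{\,*}:\rho^3=1\}|$ for the torsion part of the unit group. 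For $k$ imaginary the unit group is finite cyclic, so $|\mathcal{O}_{k,c}^{\,*}/\mathcal{O}_{k,c}^{\,*3}|=|\Gamma_x|$, equal to $3$ exactly when $k=\mathbb{Q}(\sqrt{-3})$ and $c=1$ and to $1$ otherwise; hence $\sum_{x}|\Gamma_x|^{-1}=\sum_{[\mathfrak a,\beta]}|\mathcal{O}_{k,c}^{\,*}/\mathcal{O}_{k,c}^{\,*3}|^{-1}=\sum_{(bc)^2=n/\Delta_k}|Cl_{k,c}^{\,(3)}|\,|\Omega_k(b,c)|$, which is precisely the coefficient of $|n|^{-s}$ in the claimed formula for $\xi_1^{\vee}(s)$. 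For $k$ real the unit group has rank one, so $|\mathcal{O}_{k,c}^{\,*}/\mathcal{O}_{k,c}^{\,*3}|=3$ while $\Gamma_x$ is trivial (the only real cube root of unity is $1$); since $\xi_2^{\vee}$ carries weight $1$, here $|S(n)|=\sum 3\,|Cl_{k,c}^{\,(3)}|\,|\Omega_k(b,c)|$, reproduced by the explicit factor $3$ in the asserted formula. For $\mathbb{Q}^2$ the unit group has exponent $2$, so cubing is the identity and $|\mathcal{O}_{\mathbb{Q}^2,c}^{\,*}/\mathcal{O}_{\mathbb{Q}^2,c}^{\,*3}|=1$, matching the coefficient $1$ of the split term.

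The \textbf{main obstacle} is exactly this last bookkeeping of units and stabilizers. One must extract the stabilizer correspondence $\Gamma_x\cong\{\rho\in\mathcal{O}_{k,c}^{\,*}:\rho^3=1\}$ cleanly from the parametrization, and then track the three-way split in which the common factor $|\mathcal{O}_{k,c}^{\,*}/\mathcal{O}_{k,c}^{\,*3}|$ is alternately cancelled by the automorphism weight $|\Gamma_x|^{-1}$ (imaginary case), compensated by an explicit factor $3$ (real case), or simply trivial (split case). Everything else---the orthogonality unfolding and the reindexing of $\{(k,b,c)\}$ against $\{n\}$---is routine.
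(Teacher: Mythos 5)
Your proposal is correct and follows essentially the same route as the paper: orthogonality of cubic characters to convert the $L$-series into counts over $\Omega_k(b,c)$, Theorem $3.1$ and Lemma $3.2$ to identify these counts with $\Gamma$-orbits in $L^{\vee}(n)$, and the same three-way comparison of $|\,\mathcal{O}_{k,c}^{\,*}/\mathcal{O}_{k,c}^{\,*\,3}\,|$ against $|\,\Gamma_x\,|$ in the imaginary, real, and split cases. The only notable difference is that you derive the stabilizer identification $\Gamma_x \cong \{\,\rho \in \mathcal{O}_{k,c}^{\,*} : \rho^3 = 1\,\}$ internally from the covariance argument of Theorem $3.1$, whereas the paper simply cites Shintani and Nakagawa for the fact that $|\,\Gamma_x\,| = 3$ exactly when $k = \mathbb{Q}(\sqrt{-3\,})$ and $c=1$; your version is slightly more self-contained but amounts to the same computation.
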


\begin{proof}
 Let $ x(u,v) \in L^{\vee} $ and let
  $ [\, {\mathfrak a } \,, \beta \,] \in S_k(\,b\,,c\,)/ \sim $
  be its  equivalence class under the bijection  $\Psi_n$.
 It is easy to check, using (\cite[Prop 2.12\,]{Shin}, \cite {Nak2}), that
  $ |\,\Gamma_x \,| = 3 $ if and only if $ k = {\mathbb Q} ( \sqrt{-3\,} \,)\,$ and $ c = 1\,$.
 Thus
\begin{eqnarray*}
 |\, \mathcal{O}_{k,c}^{\,*} \,/ \mathcal{O}_{k,c}^{\,*\,3}\,| =
  \left\{
  \begin{array}{ll}
  3\,|\,\Gamma_x \,|  \, , & \; \mbox{if $k$ is a real quadratic field\,,} \\[4pt]
   \;|\,\Gamma_x \,|  \, , &   \; \mbox{otherwise\,.}
  \end{array}  \right.
\end{eqnarray*}
 Let $ \Omega_{k}(c) = \bigcup_{ b \geq 1} \Omega_{k}(b\,,c) \,$.
 Putting Theorem $3.1$ and Lemma $3.2$ together, we see that
  each ${\mathfrak b } \in \Omega_{k}(c) $ corresponds to
  $ |\,\mathcal{O}_{k,c}^{\,*}\, / \mathcal{O}_{k,c}^{\,*\,3}\,| \, |\,Cl_{k,c}^{\,(3)}\,| $
   $\Gamma$-orbits in $L^{\vee}$  and all these orbits have the  discriminant
    $ - 27 \Delta_k \,c^2 \,{\bf N} {\mathfrak b }^2 $. Thus
\begin{eqnarray*}
 \xi_{1}^{\vee} (s)  & = &
   \sum_{ x \in \,{\Gamma \backslash \,L_{+}^{\vee}} }
   \frac{1}{ |\,\Gamma_x | }\, | \,\mbox{disc} (x) /  27\, |^{-s} \\
   & = &
  \sum_{ k \;\mbox{\footnotesize{imaginary quadratic}}}
   \frac{1}{\,|\,\Delta_k \,|^{s } }\,
   \sum_{c \,=\, 1}^{\infty} \,\frac{ 1 }{ c^{\,2 s} }
   \sum_{ {\mathfrak b } \in \Omega_{k}(c\,) }
   \frac{| \,Cl_{k,c}^{\,(3)}\, |}{ {\bf N} {\mathfrak b }^{2 s} } \,.
\end{eqnarray*}
 The  inner most sum  can be expressed as a summation of
  $L( \,2 s \,, \chi \,, \mathcal{O}_{k,c}\, )\, $ over  all characters $\chi$ of
  $Cl_{k,c} $ satisfying $\chi ^3 = 1 \,$.
 This proves the first part of Theorem $3.3$. Similarly, we have
\begin{eqnarray*}
 \xi_{2}^{\vee} (s) & = &
 \sum_{ k \;\mbox{\footnotesize{real quadratic}}}
  \frac{3}{\,|\,\Delta_k \,|^{s } }\,
  \sum_{c \,=\, 1}^{\infty} \,\frac{ 1 }{ c^{\,2 s} }
  \sum_{ {\mathfrak b } \in \Omega_{k}(c\,) }
   \frac{| \,Cl_{k,c}^{\,(3)}\, |}{ {\bf N} {\mathfrak b }^{2 s} } \\
   &  & \qquad \qquad \qquad + \,
  \sum_{c \,=\, 1}^{\infty} \,\frac{ 1 }{ c^{\,2 s} }
  \sum_{ {\mathfrak b } \in \Omega_{{\mathbb Q}^2}(c\,) }
   \frac{| \,Cl_{{\mathbb Q}^2,c}^{\,(3)}\, | }{ {\bf N} {\mathfrak b }^{2 s} } \,.
\end{eqnarray*}
 Converting to  $L$-series of orders, we obtain the second part of Theorem $3.3\,$.
\end{proof}

\section{Transforming $\xi_{i} (s) $  into Truncated $L$-series of Orders}

 In this section we  express Datskovsky \& Wright's beautiful Theorem \cite {D-W}
  in terms of truncated $L$-series of quadratic orders.

\begin{theorem}[Datskovsky \& Wright]
 The Shintani zeta functions $\xi_{i} (s)\,(i=1,2)$ can be expressed as
\begin{eqnarray}
\frac{ \xi_{1} (s)\,}{ \,\zeta (4s)\, \zeta (6s-1)\, } & = &
  \, 2 \, I + II + \frac{2}{3} \, III+ \frac{1}{3}\,IV \,,\\
 \frac{ \xi_{2} (s)\,}{ \,\zeta (4s)\, \zeta (6s-1)\, } & = &
  \, 2 \, I^{\,\prime} + II^{\,\prime} \, ,
\end{eqnarray}
 where
\begin{eqnarray*}
I & = & \sum_{
  K \,\mbox{\footnotesize{non-Galois cubic field with}} \, \Delta_K > 0   }
  |\Delta_K |^{-s } \,\frac{ \zeta_K (2s) }{ \zeta_K (4s)} \, , \\
 II  & =  &  \sum_{ k \,\,\mbox{\footnotesize{real quadratic field}} }
  |\Delta_k |^{-s } \, \frac{ \zeta (2s) }{ \zeta (4s)} \, \frac{ \zeta_k (2s) }{ \zeta_k (4s)}\,,\\
 III & = & \sum_{ K \,\mbox{\footnotesize{cyclic cubic field} }   }
 |\Delta_K |^{-s }\, \frac{ \zeta_K (2s) }{ \zeta_K (4s)} \, , \\
 IV & =  & \Big(  \frac{ \zeta (2s) }{ \zeta (4s)} \Big)^3 \,
\end{eqnarray*}
 and
\begin{eqnarray*}
 I^{\,\prime} & = & \sum_{ K \,\mbox{\footnotesize{complex cubic field} } }
  |\Delta_K |^{-s } \,\frac{ \zeta_K (2s) }{ \zeta_K (4s)} \, , \\
 II^{\,\prime} & =  &  \sum_{ k \,\mbox{\footnotesize{imaginary quadratic field} } }
  |\Delta_k |^{-s } \, \frac{ \zeta (2s) }{ \zeta(4s)}\, \frac{ \zeta_k (2s) }{ \zeta_k (4s)}  \,  .
\end{eqnarray*}
 In the expressions above conjugate cubic fields are counted only once.
\end{theorem}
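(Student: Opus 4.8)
The plan is to reduce everything to the \emph{Delone--Faddeev correspondence} (in the refined forms of Gan--Gross--Savin and Bhargava), which gives a discriminant-preserving bijection between $GL(2,{\mathbb Z})$-orbits of integral binary cubic forms and isomorphism classes of cubic rings (commutative rings free of rank $3$ over ${\mathbb Z}$), under which $\mathrm{disc}(x)=\mathrm{disc}(R)$ and the $GL(2,{\mathbb Z})$-stabilizer of $x$ is identified with the automorphism group $\mathrm{Aut}(R)$. First I would reconcile this with Shintani's use of $\Gamma=SL(2,{\mathbb Z})$: since $[\,GL(2,{\mathbb Z}):SL(2,{\mathbb Z})\,]=2$, the groupoid-cardinality identity (restricting a group action to an index-$2$ subgroup doubles the mass $\sum 1/|\mathrm{Stab}|$) gives
\begin{eqnarray*}
 \xi_{i}(s) = \sum_{x \in \Gamma \backslash L_{\pm}} \frac{|\mathrm{disc}(x)|^{-s}}{|\,\Gamma_x|} = 2 \sum_{[R]} \frac{|\mathrm{disc}(R)|^{-s}}{|\mathrm{Aut}(R)|}\,,
\end{eqnarray*}
where $R$ runs over isomorphism classes of cubic rings whose discriminant has the sign attached to $i=1,2$. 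Because $L_{\pm}$ consists of forms of nonzero discriminant, each such $R$ is an order in an \'etale cubic algebra $K=R\otimes{\mathbb Q}$, and $\mathrm{sgn}\,\mathrm{disc}(R)=\mathrm{sgn}\,\Delta_K$; a positive discriminant forces $K$ totally real (a totally real cubic field, a real quadratic field times ${\mathbb Q}$, or ${\mathbb Q}^3$), while a negative discriminant forces $K$ to have a complex place (a complex cubic field, or an imaginary quadratic field times ${\mathbb Q}$). This already explains why only $I',II'$ survive for $\xi_2$.

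Next I would stratify the sum by the isomorphism type of $K$. Fixing $K$, the isomorphism classes of orders $R$ with $R\otimes{\mathbb Q}\cong K$ are exactly the $\mathrm{Aut}(K)$-orbits on the set of suborders of $\mathcal{O}_K$, with $\mathrm{Stab}_{\mathrm{Aut}(K)}(R)=\mathrm{Aut}(R)$ (every order isomorphism extends to $K$). Hence a second orbit-counting step, together with $\mathrm{disc}(R)=[\,\mathcal{O}_K:R\,]^2\Delta_K$, yields
\begin{eqnarray*}
 \sum_{[R]\,:\,R \otimes {\mathbb Q} \,\cong\, K} \frac{|\mathrm{disc}(R)|^{-s}}{|\mathrm{Aut}(R)|} = \frac{|\Delta_K|^{-s}}{|\mathrm{Aut}(K)|} \sum_{R \subseteq \mathcal{O}_K} [\,\mathcal{O}_K : R\,]^{-2s}\,.
\end{eqnarray*}
Combined with the factor $2$ above, the universal coefficient becomes $2/|\mathrm{Aut}(K)|$, which evaluates to $2,\,1,\,\tfrac23,\,\tfrac13$ for $K$ a non-Galois cubic field, a quadratic field times ${\mathbb Q}$, a cyclic cubic field, and ${\mathbb Q}^3$ (where $|\mathrm{Aut}(K)|=1,2,3,6$). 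This reproduces exactly the coefficients in the statement.

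It then remains to identify the \emph{suborder zeta function} $\sum_{R\subseteq\mathcal{O}_K}[\,\mathcal{O}_K:R\,]^{-2s}$. Since both the index and the subring condition are local, this Dirichlet series is an Euler product whose $p$-factor counts cubic subrings of $\mathcal{O}_K\otimes{\mathbb Z}_p$ of given index and depends only on the splitting type of $p$ in $K$. I would carry out this finite local count (split, partially split, inert, and ramified $p$) and verify that the product assembles into
\begin{eqnarray*}
 \sum_{R \subseteq \mathcal{O}_K} [\,\mathcal{O}_K : R\,]^{-2s} = \zeta(4s)\,\zeta(6s-1)\,\frac{\zeta_K(2s)}{\zeta_K(4s)}\,;
\end{eqnarray*}
as a check, at a totally split $p$ the predicted local factor gives exactly the three ``diagonal'' subrings of index $p$ in ${\mathbb Z}_p^3$. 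Substituting $\zeta_K=\zeta_k\,\zeta$ for $K=k\times{\mathbb Q}$ and $\zeta_K=\zeta^3$ for $K={\mathbb Q}^3$ turns the strata into $II,IV$ and $II'$, the cubic-field strata give $I,III$ and $I'$, and dividing by $\zeta(4s)\,\zeta(6s-1)$ produces the displayed formulas. The main obstacle is precisely this local density computation, especially at ramified primes, together with the careful matching of automorphism and orientation conventions: one must confirm both groupoid-cardinality reductions and isolate the single universal factor $2$ cleanly, so that it is not double-counted against the $1/|\mathrm{Aut}(K)|$ weight.
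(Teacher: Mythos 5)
You should first be aware that the paper contains no proof of this statement at all: Theorem 4.1 is imported verbatim from Datskovsky--Wright \cite{D-W} (whose own argument is adelic, via local zeta functions of the prehomogeneous vector space), and the paper's only gloss on it is Remark 4.2. So your proposal cannot match ``the paper's proof''; instead it is a genuine proof sketch along the standard algebraic route, and its outline is correct. Both groupoid-cardinality steps are sound: the index-$2$ passage from $SL(2,{\mathbb Z})$-orbits to $GL(2,{\mathbb Z})$-orbits (valid here because all stabilizers are finite when $\mbox{disc}\neq 0$; note also that $\xi_2$ carries no stabilizer weight in the paper, which is harmless since $|\Gamma_x|=1$ for negative discriminant), and the passage from isomorphism classes of nondegenerate cubic rings to $\mbox{Aut}(K)$-orbits of finite-index subrings of $\mathcal{O}_K$, using $\mbox{disc}(R)=[\mathcal{O}_K:R]^2\Delta_K$ and the fact that ring isomorphisms extend to $K=R\otimes{\mathbb Q}$. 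Your intermediate identity, with the universal factor $2/|\mbox{Aut}(K)|$ evaluating to $2,1,\tfrac23,\tfrac13$, is literally the formula of Remark 4.2, and the sign bookkeeping matching $I,II,III,IV$ versus $I',II'$ is right. The one substantive caveat: the suborder identity $\sum_{R\subseteq\mathcal{O}_K}[\mathcal{O}_K:R]^{-2s}=\zeta(4s)\,\zeta(6s-1)\,\zeta_K(2s)/\zeta_K(4s)$ is not a ``finite local count'' --- each Euler factor requires counting subrings of \emph{every} index $p^k$ (your index-$p$ checks, giving $3,1,0,1$ subrings in the split, partially split, inert and totally ramified cases, are correct but only the first-order term) --- and this local computation is precisely the core content of \cite{D-W}. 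So what your argument really accomplishes is a clean and correct reduction of the cited theorem to its known local kernel, which is a more elementary and conceptual packaging than the original adelic proof, but not a way around the local computation itself.
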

\begin{remark}
 Let $ E(3)$ denote the set of isomorphism classes of $\acute{\mbox{e}}$tale algebras
  of dimension $3$ over ${\mathbb Q} \,$.
 Let $\mbox{Aut}(A)$  denote the automorphic group of an $\acute{\mbox{e}}$tale algebra $A \,$
  and $\zeta_A (s)$ the zeta function of $A\,$.
 Then Datskovsky \& Wright's Theorem can be succinctly expressed as
\begin{eqnarray*}
 \xi_{i} (s)  \, = \,\zeta (4s)\, \zeta (6s-1) \!\!\!\!
 \sum_{ \substack{ A \in E(3) \\ (-1)^{i-1}\Delta_A > 0 } }
 \frac{2}{\,| \mbox{Aut}(A) |\,} \, |\Delta_A |^{-s } \,
  \frac{ \zeta_A (2s) }{ \zeta_A (4s)} \,,  \quad  i = 1,2.
\end{eqnarray*}
\end{remark}

 We shall compute these contributions case by case.
 Our  method differs from that of Nakagawa's in that we bring the
  whole Artin $L$-function down to quadratic orders.
 To do this we need to make his map in Lemma $1.10$ of
 \cite{Nak2} (i.e.,  the map $\phi\,'$ in Lemma $4.3$ below) explicit.
 The following Galois eigenspace argument leading to Lemma $4.3$
 are taken directly from Nakagawa's work \cite{Nak2}, with some
  notational changes and  clarifications, especially on the construction of
  the map $\phi\,'$.

\subsection*{Descending to quadratic orders: $K$ is a non-Galois cubic field}
 In this case the Galois closure $\widetilde{K}$ of $K$ contains a unique quadratic field $k\,$,
  and $\widetilde{K}/k\,$ is a cyclic cubic extension whose conductor is $f \, $,
  the positive integer satisfying $ \Delta_K = f^{\,2} \Delta_k \,$.

 Let  $ A_k(f) $ denote the odd part of
  the ray class group $ Cl_k(f) =  I_k(f) \,/P_{k,1} (f)\,$.
 By class field theory, $\widetilde{K}/k$ corresponds to a subgroup $H$ of  $  A_k(f) $
  of index $3\,$ whose conductor is  $f\,$.
 Let $\tau$ be the non-trivial automorphism of the quadratic field $k\,$.
 Then $\tau$ acts on $A_k(f)$ and we have the  decomposition
\begin{eqnarray*}
 A_k(f) = \frac{1+\tau}{2}A_k(f)\oplus \frac{1-\tau}{2}A_k(f) = A_k^{+}(f) \oplus A_k^{-}(f)\, ,
\end{eqnarray*}
  where $\,A_k^{+}(f) =\{ \,a \in A_k(f) \,| \,a^\tau = a \, \}\,$ and
  $\,A_k^{-}(f) =\{\, a \in A_k(f) \,| \,a^\tau = a^{-1} \, \}\,$.
 Since $\widetilde{K}/{\mathbb Q}$ is a Galois extension, ${H}$ is
  invariant under the action of $\tau \,$.
 Thus we have decomposition ${H} = {H}^{+} \oplus \, {H}^{-} $ with
  ${H}^{+}\subseteq A_k^{+}(f)$ and ${H}^{-} \subseteq A_k^{-}(f)\,$.
 Furthermore, since $\mbox{Gal}(\widetilde{K}/{\mathbb Q}) \simeq S_3\,$,
  the action of $\tau$ on  $A_k(f)\, /\, {H}$ is non-trivial.
 Hence ${H}^{+} = A_k^{+}(f)$ and $ [\, A_k^{-}(f) : {H}^{-}] =3 $.
 Conversely, given any subgroup ${H}^{-}$  of $A_k^{-}(f)$ of index $3$  and having conductor  $f$,
  it corresponds to a unique a cyclic cubic extension $\widetilde{K}/k$  such that
  $\widetilde{K}/{\mathbb Q}$ is a Galois extension with
  $ \mbox{Gal}(\widetilde{K}/{\mathbb Q}) \simeq S_3\,$, and
  the three conjugate cubic subfields $K$ of $\widetilde{K}$ have discriminant
  $\Delta_K = f^{\,2} \Delta_k \,$.

 Write $Cl_{k,f} $
   for the  class group of $\mathcal{O}_{k,f}$ as before.
 Let  $\phi : I_k(f) \rightarrow I(\mathcal{O}_{k,f},{f })$ be the
  isomorphism given   in Corollary $2.2\,$.
 Then
\begin{eqnarray*}
  \; P_{k,{\mathbb Z}} ( f ) = \{ \, \alpha \mathcal{O}_k \,\, | \,\,
  \alpha \in k \, ,\, \, \alpha \equiv a \,( \,\mbox{mod}^*{f }\,) \,
  \mbox{ for some } a \in {\mathbb Z} \mbox{ with } (\, a \,,{f }\,) = 1 \, \}\,
\end{eqnarray*}
  is the inverse image of $ P(\,\mathcal{O}_{k,f\,},{ f }\,)\,$ under $\phi\,$.
 By Lemma $2.3$, we have
\begin{eqnarray*}
 Cl_{k,f} \simeq I(\,\mathcal{O}_{k,f\,},{f}\,) /P(\,\mathcal{O}_{k,f\,},{f}\,)
  \simeq  I_k(f) / P_{k,{\mathbb Z}} ( f )\,.
\end{eqnarray*}
 Thus $\phi$ induces a homomorphism $\widetilde{\phi}$
  from the ray class group $ Cl_k(f) $ onto $Cl_{k,f}$
  which extends to the  exact sequence of $\mbox{Gal}(k/{\mathbb Q })$-modules :
\begin{eqnarray}
 1 \,\longrightarrow \, P_{k,{\mathbb Z}} (f)/P_{k,1} (f)
 \,\longrightarrow \, Cl_k(f) \,\stackrel{\widetilde{\phi}}{\longrightarrow} \,
 Cl_{k,f} \, \longrightarrow  \, 1 \, \,.
\end{eqnarray}
 Let $C_k(f) $  denote the odd part of $ Cl_{k,f}\,$.
 Notice that $ a^\tau = a^{-1} $ for all $ a \in C_k(f) $.
 By restricting the exact sequence (4.3) first to the odd part of groups, and then
  to the minus part of the decomposition induced by $\tau$, and using the fact that
  the minus part of $ P_{k,{\mathbb Z}} (f)/P_{k,1} (f)$ vanishes,
  Nakagawa obtained the following  result:
\begin{lemma}
 The restriction  $\phi\,'$ of $\widetilde{\phi} $ to
  the direct summand $A_k^{-}(f)\,$ of $\, Cl_k(f)\,$
  is an isomorphism between $A_k^{-}(f)$ and $ C_k(f)\,$.
\end{lemma}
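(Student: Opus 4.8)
The plan is to derive the isomorphism by applying two successive exact functors to the exact sequence $(4.3)$ of $\mbox{Gal}(k/{\mathbb Q})$-modules and checking that the kernel term is annihilated along the way.

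First I would pass to odd parts. Because the $2$-primary and odd components of a finite abelian group are functorial direct summands, the odd-part functor is exact, and applying it to $(4.3)$ gives the exact sequence
\begin{eqnarray*}
 1 \longrightarrow \big(\,P_{k,{\mathbb Z}}(f)/P_{k,1}(f)\,\big)_{\mathrm{odd}}
   \longrightarrow A_k(f) \stackrel{\widetilde{\phi}}{\longrightarrow} C_k(f)
   \longrightarrow 1 \,,
\end{eqnarray*}
since $A_k(f)$ and $C_k(f)$ are by definition the odd parts of $Cl_k(f)$ and $Cl_{k,f}\,$. On these odd groups the idempotents $(1\pm\tau)/2$ are defined over ${\mathbb Z}[1/2]\,$, so the group ring ${\mathbb Z}[1/2][\langle\tau\rangle]$ splits as a product of two rings and the minus-part functor $M \mapsto M^{-}$ is again exact. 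Since $\widetilde{\phi}$ is $\tau$-equivariant, applying this functor produces the exact sequence
\begin{eqnarray*}
 1 \longrightarrow \big(\,P_{k,{\mathbb Z}}(f)/P_{k,1}(f)\,\big)_{\mathrm{odd}}^{-}
   \longrightarrow A_k^{-}(f) \stackrel{\phi\,'}{\longrightarrow} C_k^{-}(f)
   \longrightarrow 1 \,,
\end{eqnarray*}
where $\phi\,'$ is precisely the restriction of $\widetilde{\phi}$ to $A_k^{-}(f)\,$.

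It then remains to check two points. The first, which I expect to be the main obstacle, is that the minus part of $P_{k,{\mathbb Z}}(f)/P_{k,1}(f)$ vanishes. For this I would prove that $\tau$ acts trivially on this group (up to $2$-torsion, which is harmless after passing to odd parts): if $\alpha\,\mathcal{O}_k$ is a generator with $\alpha \equiv a \,(\,\mbox{mod}^{\,*}f\,)$ for some $a \in {\mathbb Z}$ coprime to $f\,$, then the $\tau$-stability of the rational modulus $f$ forces $\alpha^{\tau} \equiv a \,(\,\mbox{mod}^{\,*}f\,)$ as well, whence $\alpha^{\tau}/\alpha \equiv 1 \,(\,\mbox{mod}^{\,*}f\,)$ and $(\alpha\,\mathcal{O}_k)^{\tau}(\alpha\,\mathcal{O}_k)^{-1} = (\alpha^{\tau}/\alpha)\,\mathcal{O}_k \in P_{k,1}(f)\,$. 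A group of odd order on which $\tau$ acts trivially has trivial minus part, so the left-hand term above is $1\,$. The second point is that $C_k^{-}(f) = C_k(f)\,$: for every invertible $\mathcal{O}_{k,f}$-ideal $\mathfrak a$ one has $\mathfrak a\,\mathfrak a^{\tau} = {\bf N}\mathfrak a \cdot \mathcal{O}_{k,f}\,$, a principal ideal, so $[\,\mathfrak a\,]^{\tau} = [\,\mathfrak a\,]^{-1}$ holds throughout $Cl_{k,f}\,$, and in particular the whole odd part $C_k(f)$ lies in its own minus part. Feeding both facts into the exact sequence makes the kernel trivial and identifies the target with $C_k(f)\,$, so $\phi\,' : A_k^{-}(f) \to C_k(f)$ is an isomorphism.
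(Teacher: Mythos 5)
Your proposal is correct and follows essentially the same route as the paper: there, Lemma $4.3$ is obtained (following Nakagawa) precisely by restricting the exact sequence $(4.3)$ first to odd parts and then to the minus parts of the $\tau$-decomposition, using the vanishing of the minus part of $P_{k,{\mathbb Z}}(f)/P_{k,1}(f)$ and the observation that $a^{\tau}=a^{-1}$ on $C_k(f)$. Your write-up simply supplies the details the paper leaves implicit --- the exactness of the odd-part and minus-part functors, the triviality of the $\tau$-action on the kernel term via $\alpha^{\tau}/\alpha \equiv 1 \,(\,\mbox{mod}^{\,*}f\,)$, and the identity $\mathfrak a\,\mathfrak a^{\tau} = {\bf N}\mathfrak a\,\mathcal{O}_{k,f}$ for invertible ideals --- all of which are correct.
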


 Now comes our real innovation of the paper.
 Let $\chi $ be a non-trivial character  of $\mbox{Gal}(\widetilde{K}/k)\,$.
 By induction formula and additivity  of Artin $L$-function, we have
\begin{eqnarray*}
 \zeta_K (s) = \zeta (s) \, L(\,s\,,\chi\,,\widetilde{K}/k\,) \, .
\end{eqnarray*}
 Through  Artin map,   we may regard  $\chi $ as a primitive character
  of the ray class group $ Cl_k(f)$
  and write the Artin $L$-function as
\begin{eqnarray*}
 L(\,s \,,\chi \,,\widetilde{K}/k\,) =
  \prod_{\,\tilde{\mathfrak p }\,\nmid \,f }
   \Big( \, 1-\frac{\chi ( \,\tilde{\mathfrak p} \,)}
    { {\bf N}{\tilde{\mathfrak p} }^s }\,\Big)^{-1} \, ,
\end{eqnarray*}
 where the product is over primes $\tilde{\mathfrak p }$ of ${\mathcal{O}}_k $ not dividing $ f\,$.
 From the  discussion above, we know that  $\chi $ is actually  a character
  of the direct summand $A_k^{-}(f)\,$ of $\, Cl_k(f)\,$, so we may
  further identify $\chi $ through the isomorphism $\phi\,'$ as
  a  cubic character of  $ Cl_{k,f}\,$.
 It is easy to see that $\chi $  is also primitive in the sense of Lemma $2.4$.
 Under these identifications, we have
  $\chi (\,\tilde{{\mathfrak a }} \,) = \chi (\,\tilde{{\mathfrak a }} \cap \mathcal{O}_{k,f} )\,$
  for all integral ideals $\tilde{{\mathfrak a }}$ of $\mathcal{O}_k$ prime to ${f}\,$.
 Thus
\begin{eqnarray}
 L(\,s \,,\chi \,,\widetilde{K}/k\,) = \prod_{\,{\mathfrak p }\,\nmid \,f }
 \Big( \, 1-\frac{\chi ( \,{\mathfrak p} \,)}{ {\bf N}{{\mathfrak p} }^s }\,\Big)^{-1} \,
 = L^*( \,s \, , \chi \,, \mathcal{O}_{k,f} \, ) \,,
\end{eqnarray}
 where in the product  ${\mathfrak p }$ goes over all primes of $\mathcal{O}_{k,f}$ not dividing $ f \,$.

 Since each  pair of complex conjugate primitive cubic characters
  $\chi \, , \, \overline{\chi}\, $ of $ Cl_{k,f}\,$ corresponds bijectively through  $\phi\,'$ with
  an isomorphic class of non-abelian cubic field $K$ whose Galois closure $\widetilde{K}\,$
  is a cyclic cubic extension of $k$ of conductor $ f \, $, we can write
\begin{eqnarray*}
 2 \, I  =   \sum_{ k \;\mbox{\footnotesize{real quadratic}}}
 \frac{1}{\, |\Delta_k |^s } \,\sum_{ f \,= \,1}^{\infty}
 \,\,f^{\,- 2 s} \!\!\!\!\!\!\!\!
 \sum_{
    \substack{ \chi \,\in \,Cl_{k,f}^{\vee}
        \,\mbox{\footnotesize{primitive}} \\
        \chi^3 = \,1 \,, \, \chi \,\neq \,1 }   }
  \,\frac{\,\zeta (2s)\,}{\,\zeta (4s)\,}
   \,\frac{ \,L^*( \,2 s \, , \chi \,, \mathcal{O}_{k,f}\, )}
   {\,\,L^*( \,4 s \, , \chi \,, \mathcal{O}_{k,f}\, )} \, .
\end{eqnarray*}
 If we allow $\chi$ in the inner  sum   take the trivial primitive character of $ Cl_{k,f}$
 (which is  possible only when $f=1$)\,, we need to add the  sum
\begin{eqnarray*}
  \sum_{ k \;\mbox{\footnotesize{real quadratic}}}
 \frac{1}{\, |\Delta_k |^s } \, \ \frac{\zeta (2s)}{\zeta (4s)}
 \,\frac{\,\zeta_k (2s)}{\,\zeta_k (4s)} \, .
\end{eqnarray*}
 But this is exactly the contribution coming from part $II$.
 Thus we have
\begin{eqnarray*}
 2 \, I + II  =  \!\!\! \sum_{ k \;\mbox{\footnotesize{real quadratic}}}
  \!\!\!\!\!\! |\Delta_k |^{- s } \,\sum_{ f \,= \,1}^{\infty}
 \,f^{\,-2 s} \!\!\!\!\!\!
 \sum_{ \substack{ \chi \,\in \,Cl_{k,f}^{\vee} \,\mbox{\footnotesize{primitive}} \\
                     \chi^3 = \,1  }     }
 \frac{\zeta (2s)}{\zeta (4s)} \frac{\,L^*( \,2 s  , \chi ,\mathcal{O}_{k,f}\, )}
 {\,L^*( \,4 s  , \chi , \mathcal{O}_{k,f}\, )}\,.
 \end{eqnarray*}
 Apply the same argument to $I^{\,\prime}$ and $ II^{\,\prime}$, we get
 \begin{eqnarray*}
 2 \, I^{\,\prime} + II^{\,\prime}
 =  \!\!\!\! \!\!\!\! \sum_{ k \,\mbox{\footnotesize{imaginary quadratic}}}
    \!\!\!\! \!\!\! \!\!\!  \!\!\!\!  |\Delta_k |^{- s } \,\sum_{ f \,= \,1}^{\infty}
  \,f^{\,- 2 s}  \!\!\!\! \!\!\! \!
 \sum_{
  \substack{ \chi \,\in \,Cl_{k,f}^{\vee} \,\mbox{\footnotesize{primitive}} \\
   \chi^3 = \,1  }    }
  \frac{\zeta (2s)}{\zeta (4s)} \frac{\,L^*( \,2 s  , \chi , \mathcal{O}_{k,f}\, )}
  {\,L^*( \,4 s  , \chi , \mathcal{O}_{k,f}\, )}\, .
 \end{eqnarray*}

\subsection*{Descending to quadratic orders: $K$ is a cyclic cubic field}
 In this case the conductor of $K/{\mathbb Q} \,$ is a positive integer
  satisfying $ f^{\,2} = \Delta_K \,$.
 By class field theory, cyclic cubic extensions $K$ of $ {\mathbb Q} $ of conductor $f$
  correspond bijectively with pairs of  primitive cubic characters
  $\chi_1^{\,} \, , \, {\chi}_1^{-1}$ of the ray class group
\begin{eqnarray*}
 Cl_{\mathbb Q}(f) =
 I_{\mathbb Q}(f) /P_{{\mathbb Q},1} (f)
 \simeq ( \,{\mathbb Z}\, / {f \,{\mathbb Z} } )^{*} / \{ \pm 1 \}\,.
 \end{eqnarray*}
 Identify $\chi_1^{\,}$ and ${\chi}_1^{-1}$ with
  characters of $\mbox{Gal}(\,K/{\mathbb Q}\,)$ through the Artin map,
  and use the additivity of Artin $L$-functions, we obtain
\begin{eqnarray*}
 \zeta_K(s)  =  \zeta (s) \, L(\,s\,,\chi_1^{\,}\,) \,L(\,s\,,{\chi}_1^{-1})\,,
 \end{eqnarray*}
 where
\begin{eqnarray*}
  L(\,s\,,\chi_1^{\,}\,)= \prod_{\,{ p }\,\nmid \,f }
   \big( \, 1-{\chi_1^{\,} ( \,{ p} \,)} \,  p^{-s} \,\big)^{-1} .
\end{eqnarray*}

 It can be checked directly, or using Corollary A$.5\,$,  that
  there is a bijection between the characters $\chi_1$ of
  $Cl_{\mathbb Q}(f)$ and
  the  characters $\chi$  of  $Cl_{\mathbb Q^2,f}$
  such that $\chi_1$ is primitive if and only if $\chi$ is primitive.
 The bijection is given by
  $\chi (\mathfrak a) = \chi_1^{\,}( a_1/a_0)$, where we put
  $\mathfrak a \,\mathcal{O}_{{\mathbb Q}^2} = ( \,a_0{\mathbb Z}\,, a_1{\mathbb Z}\,)$
  with $a_0, a_1 \in {\mathbb Z}^+ $,
  for all integral  ideals $\mathfrak a $ of
  $\mathcal{O}_{{\mathbb Q}^2,f}$ prime to   $f\,\mathcal{O}_{{\mathbb Q}^2}\,$.
 Furthermore, we have
\begin{eqnarray*}
   L(\,s\,,\chi_1^{\,}\,) \,L(\,s\,,{\chi}_1^{-1}) =
   L^*( \, s  , \chi , \mathcal{O}_{{\mathbb Q}^2,f}\, )\,.
\end{eqnarray*}
 Hence
\begin{eqnarray*}
 2 \, III =  \,
 \sum_{ f \,= \,1}^{\infty} \, \frac{1}{\,f^{\,2 s} } \!\!
 \sum_{ \substack{
    \chi \,\in \, Cl_{\mathbb Q^2,f}^{\vee}\,\mbox{\footnotesize{primitive}} \\
    \chi^3 = \,1 \,, \, \chi \,\neq \,1 } }
  \frac{\zeta (2s)}{\zeta(4s)}\,
  \frac{ L^*( \,2 s  , \chi , \mathcal{O}_{{\mathbb Q}^2,f}\, )}
   {L^*( \, 4 s  , \chi , \mathcal{O}_{{\mathbb Q}^2,f}\, ) } \,.
\end{eqnarray*}
 Again, if we allow $\chi$ to   take the trivial primitive character of $ Cl_{\mathbb Q}(f)$
  in the inner  sum (which is possible only when $f=1$)\,, we  add precisely the  term
\begin{eqnarray*}
 IV = \Big( \frac{\zeta (2s)}{\zeta (4s)} \,\Big)^3   .
\end{eqnarray*}
 Thus
\begin{eqnarray*}
 2 \, III + IV  =   \sum_{\, f \,= \,1}^{\infty} \,
 \frac{1}{\,f^{\,2 s} } \!\!
 \sum_{ \substack{
    \chi \,\in \, Cl_{\mathbb Q^2,f}^{\vee}\,\mbox{\footnotesize{primitive}} \\
    \chi^3 = \,1 \, } }
  \frac{\zeta (2s)}{\zeta(4s)}\,
  \frac{ L^*( \,2 s  , \chi , \mathcal{O}_{{\mathbb Q}^2,f}\, )}
   {L^*( \, 4 s  , \chi , \mathcal{O}_{{\mathbb Q}^2,f}\, ) } \,.
 \end{eqnarray*}
 Substitute  the  formulae above back to Theorem $4.1\,$, we obtain\,:
\begin{theorem}
\begin{eqnarray*}
 \frac{ \xi_{2} (s)\,}{ \,\zeta (2s)\, \zeta (6s-1)\, } \,\,
  = \!\!\! \!\!\! \sum_{ k \;\mbox{\footnotesize{imaginary
  quadratic}}}\!\!\!\! \!\!\!\!\!\!\!\!\!\!
   |\Delta_k |^{- s}  \,\sum_{ f \,= \,1}^{\infty}\, f^{\,- 2 s} \!\!\!\!\!\!\!\!\!\!
  \sum_{ \substack{ \chi \,\in \,Cl_{k,f}^{\vee} \,\mbox{\footnotesize{primitive}} \\
                     \chi^3 = \,1  }    } \!\!
   \frac{\,L^*( \,2 s  , \chi , \mathcal{O}_{k,f}\, )}
    {\,L^*( \,4 s , \chi , \mathcal{O}_{k,f}\, )}\,,
\end{eqnarray*}

\begin{eqnarray*}
 \frac{ \xi_{1} (s)\,}{ \,\zeta (2s)\, \zeta (6s-1)\, } \,\,= \!\!
  \sum_{ k \;\mbox{\footnotesize{real quadratic}}}
    \!\!\!\!\!\! |\Delta_k |^{- s }\,
  \sum_{ f \,= \,1}^{\infty} \,f^{ \,- 2 s} \!\!\!\!\!\!\!\!
   \sum_{ \substack{ \chi \,\in \,Cl_{k,f}^{\vee} \,\mbox{\footnotesize{primitive}} \\
             \chi^3 = \,1  } } \!\!
    \frac{\,L^*( \,2 s  , \chi , \mathcal{O}_{k,f}\,)}
      {\,L^*( \,4 s  , \chi , \mathcal{O}_{k,f}\, )} \\
     \quad \quad + \,\, \frac{1}{3}\,\, \sum_{\, f \,= \,1}^{\infty} \,f^{\,-2 s} \!\!\!\!\!\!
   \sum_{ \substack{
    \chi \,\in \, Cl_{\mathbb Q^2,f}^{\vee}\,\mbox{\footnotesize{primitive}} \\
    \chi^3 = \,1 \, } }
  \frac{ L^*( \,2 s  , \chi , \mathcal{O}_{{\mathbb Q}^2,f}\, )}
   {L^*( \, 4 s  , \chi , \mathcal{O}_{{\mathbb Q}^2,f}\, ) } \,.
\end{eqnarray*}
\end{theorem}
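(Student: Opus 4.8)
The plan is to assemble Theorem 4.3 directly from Theorem 4.1 together with the three summation formulas for $2I + II$, $2I' + II'$, and $2III + IV$ that were established in the preceding case-by-case discussion. All of the substantive work has already been carried out upstream: the Galois eigenspace decomposition of Lemma 4.3, the descent of the Artin $L$-function of $\widetilde{K}/k$ to the truncated $L$-series $L^*(s,\chi,\mathcal{O}_{k,f})$ through identity $(4.4)$, and the bijection between primitive cubic characters of $Cl_{\mathbb{Q}}(f)$ and of $Cl_{\mathbb{Q}^2,f}$. What remains for Theorem 4.3 is therefore a collation of these formulas and a single change of normalization.

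First I would rewrite the two Datskovsky--Wright identities of Theorem 4.1 by grouping the cyclic-cubic and split contributions. For $\xi_1$ one observes that $\frac{2}{3}III + \frac{1}{3}IV = \frac{1}{3}(2III + IV)$, so that
\begin{eqnarray*}
 \frac{\xi_1(s)}{\zeta(4s)\,\zeta(6s-1)} = (2I + II) + \frac{1}{3}(2III + IV), \qquad
 \frac{\xi_2(s)}{\zeta(4s)\,\zeta(6s-1)} = 2I' + II'.
\end{eqnarray*}
Then I would substitute the three closed forms already derived, in each of which every summand carries the factor $\frac{\zeta(2s)}{\zeta(4s)}$ multiplying $\frac{L^*(2s,\chi,\mathcal{O}_{k,f})}{L^*(4s,\chi,\mathcal{O}_{k,f})}$, with $2I+II$ ranging over real quadratic $k$, $2I'+II'$ over imaginary quadratic $k$, and $2III+IV$ over characters of $Cl_{\mathbb{Q}^2,f}$.

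The decisive step is the change of normalization. Multiplying both identities by $\frac{\zeta(4s)}{\zeta(2s)}$ replaces the denominator $\zeta(4s)\,\zeta(6s-1)$ by $\zeta(2s)\,\zeta(6s-1)$ and simultaneously cancels the factor $\frac{\zeta(2s)}{\zeta(4s)}$ sitting inside each summand. What survives is exactly $\frac{L^*(2s,\chi,\mathcal{O}_{k,f})}{L^*(4s,\chi,\mathcal{O}_{k,f})}$, summed over the primitive characters $\chi$ with $\chi^3 = 1$: the real quadratic fields (together with the $\frac{1}{3}$-weighted $\mathbb{Q}^2$ term) feed $\xi_1$, and the imaginary quadratic fields feed $\xi_2$. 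Collecting terms yields the two displayed formulas.

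The only point requiring real attention---and the main, though minor, obstacle---is the bookkeeping of the rational weights through the regrouping. One must confirm that the coefficient $\frac{1}{3}$ attached to the $\mathbb{Q}^2$ summand in the $\xi_1$ formula is precisely what is produced by $\frac{1}{3}(2III+IV)$, and that no analogous split-algebra term appears on the $\xi_2$ side, consistent with $II'$ running only over imaginary quadratic fields and with the absence of any cyclic-cubic contribution when $\Delta < 0$. Once this weight tracking and the clean cancellation of the $\zeta(2s)/\zeta(4s)$ ratios are verified, the proof is complete.
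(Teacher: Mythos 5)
Your proposal is correct and follows precisely the paper's own route: the paper's proof of this theorem consists exactly of substituting the three derived formulas for $2I+II$, $2I'+II'$, and $2III+IV$ back into Theorem 4.1, with the identity $\tfrac{2}{3}III+\tfrac{1}{3}IV=\tfrac{1}{3}(2III+IV)$ and the cancellation of the common factor $\zeta(2s)/\zeta(4s)$ converting the denominator $\zeta(4s)\,\zeta(6s-1)$ into $\zeta(2s)\,\zeta(6s-1)$. Your explicit tracking of the weight $\tfrac{1}{3}$ on the $\mathbb{Q}^2$ term and of the normalization change is exactly the bookkeeping the paper leaves implicit.
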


\section{Relations between
  $L^*( \, s \, , \chi \,, \mathcal{O}_{k,m}\, )$
    and $L( \, s \, , \chi \,, \mathcal{O}_{k,m} )$}
 In this section we adopt the following convention.
 Let $k$ be a quadratic  $\acute{\mbox{e}}$tale algebra.
 We  define the quadratic order $\mathcal{O}_{k,m}$
  and its class group $ Cl_{k,m} $ as before.
 Let $\chi$ be a character of $ Cl_{k,m} \,$.
 For economy of  notations, we  denote all characters induced from $\chi$
  by the same symbol $\chi\,$.
 Similarly, if $\chi$ is defined at $ Cl_{k,m'} $
  for some $m'\,|\,m\,$, then we still use $\chi$ to denote
  the character of $ Cl_{k,m'} $ which induces  $\chi\,$.

 The goal of this section is to prove the following identity :
\begin{theorem}
 Let $k$ be a quadratic $\acute{\mbox{e}}$tale algebra
  and let $\chi$ be a primitive character of
  $Cl_{k,f} $ of odd order, then
\begin{eqnarray}
 \sum_{d \,=\, 1}^{\infty} \frac{ 1 }{\,\, d^{\,s}\, }\,
  L( \, s \,,\chi \,, \mathcal{O}_{k ,  f d}\, )
  = \zeta (s)\, \zeta (3s-1)
  \frac{\,L^*( \, s \, , \chi \,,\mathcal{O}_{k,f}\, )}
   {\,L^*( \,2 s \, , \chi \,,\mathcal{O}_{k,f}\, )} \, .
\end{eqnarray}
\end{theorem}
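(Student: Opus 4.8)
The plan is to prove $(5.1)$ by factoring both sides into Euler products over the rational primes and matching the local factors one prime at a time. The basic structural input is that the full $L$-series of an order factors locally: since the group of invertible $\mathcal{O}_{k,m}$-ideals decomposes according to its localizations at the rational primes and both $\chi$ and ${\bf N}$ are multiplicative, one has, for $\Re(s)$ large,
\[
 L(\,s\,,\chi\,,\mathcal{O}_{k,m}\,)=\prod_{p} L_p(\,s\,,\chi\,,\mathcal{O}_{k,m}\,),
\]
where $L_p$ collects the terms $\chi({\mathfrak a})\,{\bf N}{\mathfrak a}^{-s}$ coming from integral invertible ideals supported at $p$. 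For $p\nmid m$ this local factor is the unramified Euler factor $E_p(s)=\prod_{{\mathfrak p}\mid p}(1-\chi({\mathfrak p})\,{\bf N}{\mathfrak p}^{-s})^{-1}$, so that $L^*(\,s\,,\chi\,,\mathcal{O}_{k,f}\,)=\prod_{p}E_p^{(f)}(s)$, where $E_p^{(f)}=E_p$ for $p\nmid f$ and $E_p^{(f)}=1$ for $p\mid f$.

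Writing $f=\prod_p p^{f_p}$ and $d=\prod_p p^{e_p}$, the order $\mathcal{O}_{k,fd}$ has local conductor exponent $f_p+e_p$ at $p$, so interchanging the summation over $d$ with the product over $p$ turns the left side of $(5.1)$ into $\prod_p H_p(s)$ with
\[
 H_p(s)=\sum_{e\ge 0} p^{-es}\, L_p(\,s\,,\chi\,,\mathcal{O}_{k,p^{f_p+e}}\,),
\]
the term with $f_p+e=0$ being read as $E_p(s)$. Since $\zeta(s)\,\zeta(3s-1)=\prod_p (1-p^{-s})^{-1}(1-p^{1-3s})^{-1}$ and the ratio $L^*(s,\chi,\mathcal{O}_{k,f})/L^*(2s,\chi,\mathcal{O}_{k,f})$ factors as $\prod_p E_p^{(f)}(s)/E_p^{(f)}(2s)$, the theorem is equivalent to the single local identity
\[
 H_p(s)=(1-p^{-s})^{-1}(1-p^{1-3s})^{-1}\,\frac{E_p^{(f)}(s)}{E_p^{(f)}(2s)}
\]
for every prime $p$, which I would establish case by case according to the splitting of $p$ in $k$ and whether $p\mid f$.

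For $p\nmid f$ I would parameterize the integral invertible ideals of each local order $\mathcal{O}_{k,p^e}$ by their pair of valuations together with a residue class modulo $p^e$, and read off the norm, the $\chi$-value and the multiplicity of each. For instance in the split case, where $p\,\mathcal{O}_k=\mathfrak{p}_1\mathfrak{p}_2$, one has $\chi(\mathfrak{p}_2)=\chi(\mathfrak{p}_1)^{-1}$ because $p\,\mathcal{O}_{k,f}$ is a principal ideal, an ideal of valuation $(a,b)$ has norm $p^{a+b}$ and $\chi$-value $\chi(\mathfrak{p}_1)^{a-b}$, and summing first over ideals of $\mathcal{O}_{k,p^e}$ and then over $e$ with weight $p^{-es}$ makes the diagonal ($a=b$) contributions telescope; a short computation then collapses $H_p(s)$ to $(1-p^{-s})^{-1}(1-p^{1-3s})^{-1}E_p(s)/E_p(2s)$. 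The inert case is entirely analogous, and in the ramified case the hypothesis that $\chi$ has odd order forces $\chi(\mathfrak{p})=1$, which is exactly what is needed for the same collapse.

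The hard part will be the case $p\mid f$. Here the local order is never maximal, $\chi$ is ramified at $p$, and the target local factor $(1-p^{-s})^{-1}(1-p^{1-3s})^{-1}$ is independent of the splitting type, so I must show that every splitting-dependent contribution cancels. The mechanism is the primitivity of $\chi$: its component at $p$ is a primitive character of the local class group at level $p^{f_p}$, and when the class of an invertible ideal of $\mathcal{O}_{k,p^{f_p+e}}$ is pushed down to level $p^{f_p}$ and paired against this nontrivial character, every non-principal class is annihilated, leaving only the principal-type ideals whose norms reassemble into $(1-p^{-s})^{-1}(1-p^{1-3s})^{-1}$. Making this cancellation precise---correctly tracking the map $\mathfrak{a}\mapsto\mathfrak{a}\,\mathcal{O}_{k,f}$ on local ideal classes and pairing it with $\chi$---is the main technical obstacle, and it is precisely where the hypotheses that $\chi$ be primitive of conductor exactly $f$ and of odd order are indispensable.
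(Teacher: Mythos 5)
Your plan is workable, and it is worth recording that it is essentially a prime-by-prime reorganization of the paper's argument rather than a new mechanism: every ingredient you need corresponds to one in the paper, just deployed locally instead of globally. The paper never forms an Euler product of the full series $L(\,s\,,\chi\,,\mathcal{O}_{k,fd}\,)$; instead it proves the global identity (Theorem 5.4)
\[
 L(\,s\,,\chi\,,\mathcal{O}_{k,m}\,)=\sum_{f\,|\,m'|\,m}\frac{|\,U(\,\mathcal{O}_{k,m'},\mathcal{O}_{k,m}\,)\,|}{(m/m')^{2s}}\,L^{*}(\,s\,,\chi\,,\mathcal{O}_{k,m'}\,)\,,
\]
whose proof contains the character-sum cancellation ($\sum\chi(\mathfrak{B})$ over a fiber of $\varphi_{k,m'\!,m}$ vanishes unless $\chi$ is defined at level $m'$) --- exactly the primitivity mechanism you invoke at $p\mid f$; then Lemma 5.6 (class number formula for the unit indices), a global Dirichlet-series rearrangement producing $\zeta(s)\,\zeta(3s-1)$, and finally Lemma 5.7, which is literally your local identity at $p\nmid f$, proved with the same use of odd order (inert: $\chi(\mathfrak{p})=1$ automatically since $\mathfrak{p}=p\,\mathcal{O}_{k,f}$ is principal; ramified: $\chi(\mathfrak{p})^2=1$ plus odd order). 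What your organization buys is that each hypothesis becomes visible in a single Euler factor; what it costs is that you must justify the factorization of the full $L$-series of a non-maximal order (the paper only ever needs Euler products for truncated series, where they are immediate from Dedekind's correspondence), and you must redo at each ramified prime a computation that the paper gets in one stroke from Theorem 5.4.

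Two points in your sketch need repair before the plan is a proof. First, integral invertible ideals of $\mathcal{O}_{k,m}$ do not factor into primes; the factorization $L=\prod_p L_p$ rests on the decomposition $I(\mathcal{O}_{k,m})\cong\bigoplus_p I\big((\mathcal{O}_{k,m})_{(p)}\big)$, and you must check that it preserves integrality and norms and that the induced character $\chi$ is multiplicative on it --- true, but not stated anywhere in the paper, so it is an input you owe. Second, and more seriously, your phrase that the ``component of $\chi$ at $p$ is a primitive character of the local class group'' has no literal meaning: $Cl_{k,f}$ does not decompose as a product over primes, so $\chi$ has no local component. The correct formulation of your cancellation at $p\mid f$ is: an invertible ideal of the local order of conductor $p^{f_p+e}$ whose push-forward to $\mathcal{O}_{k,f}$ is divisible by a nontrivial part of the conductor has its class in a coset of $\ker\big(\varphi_{k,f''\!,f}:Cl_{k,f}\to Cl_{k,f''}\big)$ for some proper divisor $f''=f/p^{\,j}$ of $f$; these cosets are filled out with uniform multiplicity (a short torsor argument over the groups $U(\cdot,\cdot)$ is needed here), and primitivity of $\chi$ in the sense of Lemma 2.4 says precisely that $\chi$ is nontrivial on each such kernel, so those contributions vanish. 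The surviving ``principal-type'' terms then give $\sum_{e\ge 0}p^{-es}\sum_{j=0}^{e}p^{\,j(1-2s)}=(1-p^{-s})^{-1}(1-p^{1-3s})^{-1}$, using the local unit indices $p^{\,j}$ supplied by the class number formula. With these two points made precise, your local identities hold at every prime and the theorem follows.
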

 \noindent Our main result Theorem $1.1\,$  follows easily from this identity.
 By Lemma $2.4\,$,   each character $\chi$ of $Cl_{k,c}$
  is induced by a unique primitive character at $Cl_{k,f}\,$,
   where $f$ is the conductor of $\chi\,$.
 Group  the characters according to their primitive characters,
  and  apply Theorem $5.1$, we get
 \begin{eqnarray*}
  & & \sum_{c \,=\, 1}^{\infty}    \frac{ 1 }{\,\, c^{\,s}\, }
  \sum_{ \substack{ \chi \,\in \,Cl_{k,c}^{\vee}  \\
  \chi^3 = \,1  } } L( \, s \, , \chi \,, \mathcal{O}_{k,c}\, ) \\
   &  = & \sum_{ f \,= \,1}^{\infty} \, \frac{1}{f^{\, s} }
     \sum_{ \substack{ \chi \,\in \,Cl_{k,f}^{\vee}
      \,\mbox{\footnotesize{primitive}} \\
      \chi^3 = \,1  }}  \,\,
      \sum_{d \,=\, 1}^{\infty} \frac{ 1 }{\,\, d^{\,s}\, }\,
        L( \, s \,,\chi \,, \mathcal{O}_{k ,  f d}\, ) \\
   &  = &   \zeta (s)\, \zeta (3s-1) \,\sum_{ f \,= \,1}^{\infty}
   \, \frac{1}{f^{\, s} }
   \sum_{ \substack{ \chi \,\in \,Cl_{k,f}^{\vee}
   \,\mbox{\footnotesize{primitive}} \\
   \chi^3 = \,1  }}
   \,\frac{\,L^*( \, s \, , \chi \,, \mathcal{O}_{k,f}\, )}
      {\,L^*(\,2 s \, , \chi \,, \mathcal{O}_{k,f}\, )} \, .
 \end{eqnarray*}
 Take the sum  over all quadratic $\acute{\mbox{e}}$tale algebras $k$
  with positive (resp., negative) discriminants,
  and apply Theorem $3.3$ and Theorem $4.4\,$, we obtain Theorem $1.1\,$.

\subsection*{From L-series of orders to truncated L-series }
 To compute
  $ L( \, s \,,\chi \,,\mathcal{O}_{k , m})\,$,
  we have to deal with integral $ \mathcal{O}_{k ,  m} $-ideals
  not necessarily prime to the conductor.
 The following Lemma is crucial.
\begin{lemma}
 Let ${\mathfrak a}$ be an integral ideal of $\,\mathcal{O}_{k,m}$ and
  let $ c $ be the smallest positive integer contained in
  $  \, {\mathfrak a} + m \,\mathcal{O}_{k} \,$.
 Put $ m' = m\, /\,c \,\in \, {\mathbb Z} \,$.
 Then $ \,{\mathfrak a} + m \,\mathcal{O}_{k} =  c \,\mathcal{O}_{k,m'} \,$
  and $ c^{-1}{\mathfrak a}\, \mathcal{O}_{k,m'} $ is
  an integral $\mathcal{O}_{k,m'}$-ideal prime to the conductor  of
  $\,\mathcal{O}_{k,m'}$.
\end{lemma}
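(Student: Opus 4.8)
The plan is to reduce everything to the single structural fact that the conductor of $\mathcal{O}_{k,m}$ equals $m\mathcal{O}_k$, together with the elementary presentation $\mathcal{O}_{k,m}=\mathbb{Z}\,\mathbf{1}+m\mathcal{O}_k$. First I would record that $m\mathcal{O}_k$ is an integral ideal of $\mathcal{O}_{k,m}$ (indeed it is the conductor), so that $\mathfrak{a}+m\mathcal{O}_k$ is again an $\mathcal{O}_{k,m}$-module, in particular a $\mathbb{Z}$-submodule of $k$. Since $m\,\mathbf{1}\in m\mathcal{O}_k\subseteq\mathfrak{a}+m\mathcal{O}_k$, the group of rational integers it contains, $\mathbb{Z}\cap(\mathfrak{a}+m\mathcal{O}_k)$, is a nonzero subgroup of $\mathbb{Z}$, hence equals $c\mathbb{Z}$, where $c$ is exactly the smallest positive integer lying in $\mathfrak{a}+m\mathcal{O}_k$. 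As $m$ belongs to this subgroup we get $c\mid m$ and $m'=m/c\in\mathbb{Z}$, which is the integrality claim hidden in the statement.

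For the identity $\mathfrak{a}+m\mathcal{O}_k=c\,\mathcal{O}_{k,m'}$ I would establish the two inclusions directly. The containment $c\mathbb{Z}\,\mathbf{1}+m\mathcal{O}_k\subseteq\mathfrak{a}+m\mathcal{O}_k$ is immediate from $c\in\mathfrak{a}+m\mathcal{O}_k$. For the reverse, I would take $x\in\mathfrak{a}$ and use $\mathcal{O}_{k,m}=\mathbb{Z}\,\mathbf{1}+m\mathcal{O}_k$ to write $x=a\,\mathbf{1}+m\gamma$ with $a\in\mathbb{Z}$ and $\gamma\in\mathcal{O}_k$; then $a\,\mathbf{1}=x-m\gamma$ lies in $\mathbb{Z}\cap(\mathfrak{a}+m\mathcal{O}_k)=c\mathbb{Z}$, so $x\in c\mathbb{Z}\,\mathbf{1}+m\mathcal{O}_k$. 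Finally $c\mathbb{Z}\,\mathbf{1}+m\mathcal{O}_k=c(\mathbb{Z}\,\mathbf{1}+m'\mathcal{O}_k)=c\,\mathcal{O}_{k,m'}$, because $cm'=m$. This is the first assertion.

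For the second assertion, set $\mathfrak{b}=c^{-1}\mathfrak{a}\,\mathcal{O}_{k,m'}$. Integrality follows from the first part: $\mathfrak{a}\subseteq\mathfrak{a}+m\mathcal{O}_k=c\,\mathcal{O}_{k,m'}$ forces $\mathfrak{a}\,\mathcal{O}_{k,m'}\subseteq c\,\mathcal{O}_{k,m'}$, so $\mathfrak{b}\subseteq\mathcal{O}_{k,m'}$, and $\mathfrak{b}$ is visibly an $\mathcal{O}_{k,m'}$-ideal. To see that it is prime to the conductor $m'\mathcal{O}_k$ of $\mathcal{O}_{k,m'}$, I would multiply the identity $\mathfrak{a}+m\mathcal{O}_k=c\,\mathcal{O}_{k,m'}$ by $\mathcal{O}_{k,m'}$ and use $m\mathcal{O}_k\cdot\mathcal{O}_{k,m'}=m\mathcal{O}_k$ (valid since $\mathbf{1}\in\mathcal{O}_{k,m'}\subseteq\mathcal{O}_k$) to obtain $\mathfrak{a}\,\mathcal{O}_{k,m'}+m\mathcal{O}_k=c\,\mathcal{O}_{k,m'}$; dividing through by the nonzero integer $c$ and using $m/c=m'$ yields $\mathfrak{b}+m'\mathcal{O}_k=\mathcal{O}_{k,m'}$, which is precisely the statement that $\mathfrak{b}$ is prime to the conductor.

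I do not anticipate a serious obstacle: the whole content is the observation that reduction modulo the conductor identifies $\mathcal{O}_{k,m}/m\mathcal{O}_k$ with $\mathbb{Z}/m\mathbb{Z}$, turning $\mathfrak{a}$ into an ideal of $\mathbb{Z}/m\mathbb{Z}$ whose generator is the integer $c$. The only points that need care are the verification $\mathbb{Z}\cap m\mathcal{O}_k=m\mathbb{Z}$ (which is what secures $c\mid m$) and the legitimacy of clearing the scalar $c$ in the module identities; both become routine once one fixes a $\mathbb{Z}$-basis $\mathbf{1},\omega$ of $\mathcal{O}_k$, and the split case $k=\mathbb{Q}\varoplus\mathbb{Q}$ is handled in exactly the same way with $\mathcal{O}_k=\mathbb{Z}\times\mathbb{Z}$.
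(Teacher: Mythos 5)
Your proof is correct and takes essentially the same route as the paper: the identity $\mathfrak{a}+m\,\mathcal{O}_k=c\,\mathcal{O}_{k,m'}$, which the paper obtains in one line from the modular law, is exactly what your two-inclusion argument verifies by hand, and your final step (multiplying the identity by $\mathcal{O}_{k,m'}$ and clearing the scalar $c$) is precisely the computation the paper compresses into its concluding ``Thus''. You also spell out details the paper leaves implicit, such as $c\mid m$ and the integrality of $c^{-1}\mathfrak{a}\,\mathcal{O}_{k,m'}$, which only strengthens the write-up.
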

\begin{proof}
 By modular law, we have
\begin{eqnarray*}
 {\mathfrak a} + m \,\mathcal{O}_{k} & = &
 ( \, {\mathfrak a} + m \,\mathcal{O}_{k}) \,\cap \,
 ( \, {\mathbb Z}\,{\bf 1} + m \,\mathcal{O}_{k}) \\
 & = &  c \, {\mathbb Z}\,{\bf 1} + m \,\mathcal{O}_{k} \\
 & = &  c \,\mathcal{O}_{k,m'} \,.
\end{eqnarray*}
 Thus
    $ \,\, c^{-1} {\mathfrak a}\,\mathcal{O}_{k,m'} +  m' \, \mathcal{O}_{k} =
   \,\mathcal{O}_{k,m'} \,$.
\end{proof}

 For   $m\,,\, m' \in {\mathbb Z}^{+} $ with $ m' |\,m\,$,
  let $\varphi_{k,m'\!,m} : Cl_{k,m} \rightarrow  Cl_{k,m'} $
  denote the surjective homomorphism sending the
  ideal class of $  \mathfrak a  $ to the ideal class of
  $\mathfrak a\,\mathcal{O}_{k,m'}\, $,
  for every invertible $\mathcal{O}_{k,m}$-ideal $\mathfrak a \,$.

\begin{lemma}
For every ideal class $ \mathfrak{A} $ of $ Cl_{k,m} \,$, we have
\begin{eqnarray}
 \quad \quad
  \zeta( \,s \, , \mathfrak{A} \,, \mathcal{O}_{k,m}\, ) =
  \sum_{\,m'|\,m}\,
  \frac{ \,[\,\,\mathcal{O}_{k, m' }^{\,*}:
  \mathcal{O}_{k,m}^{\,*}\,\, ] \,}{ (\,m\,/m')^{ 2 s } }\,
  \zeta^{\,*}( \,s \, , \varphi_{k,m'\!,m}\,(\, \mathfrak{A}\, ) \,,
  \mathcal{O}_{k,m'}\, )\,.
 \end{eqnarray}
\end{lemma}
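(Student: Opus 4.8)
The plan is to partition the integral invertible $\mathcal{O}_{k,m}$-ideals in a fixed class $\mathfrak{A}$ according to the invariant $c$ attached to each such ideal by Lemma $5.2$, and to match each resulting piece with a truncated partial zeta function at the sub-order $\mathcal{O}_{k,m'}$, where $m' = m/c$. Concretely, to an integral invertible $\mathcal{O}_{k,m}$-ideal $\mathfrak{a}$ with invariant $c$ I associate $\mathfrak{b} = c^{-1}\mathfrak{a}\,\mathcal{O}_{k,m'}$, which by Lemma $5.2$ is an integral $\mathcal{O}_{k,m'}$-ideal prime to the conductor of $\mathcal{O}_{k,m'}$. First I would record the two elementary compatibilities of this map. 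Since $c$ is a rational scalar, $\mathfrak{b}$ and $\mathfrak{a}\,\mathcal{O}_{k,m'}$ lie in the same class, so $[\,\mathfrak{b}\,] = \varphi_{k,m'\!,m}(\,[\,\mathfrak{a}\,]\,) = \varphi_{k,m'\!,m}(\mathfrak{A})$ in $Cl_{k,m'}$; and extending $\mathfrak{a}\,\mathcal{O}_{k,m'} = c\,\mathfrak{b}$ up to $\mathcal{O}_k$ gives $\mathfrak{a}\,\mathcal{O}_k = c\,\mathfrak{b}\,\mathcal{O}_k$, whence ${\bf N}\mathfrak{a} = c^2\,{\bf N}\mathfrak{b} = (m/m')^2\,{\bf N}\mathfrak{b}$ because multiplication by $c$ scales the index of a rank-two lattice by $c^2$. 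This already accounts for the factor $(m/m')^{2s}$ in $(5.3)$.

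The heart of the matter is to show that, for each divisor $m'$ of $m$ and each integral prime-to-conductor $\mathcal{O}_{k,m'}$-ideal $\mathfrak{b}$ in the class $\varphi_{k,m'\!,m}(\mathfrak{A})$, the fiber over $\mathfrak{b}$ has exactly $[\,\mathcal{O}_{k,m'}^{\,*}:\mathcal{O}_{k,m}^{\,*}\,]$ elements. I would first establish the clean fact that \emph{every} invertible $\mathcal{O}_{k,m}$-ideal $\mathfrak{a}$ satisfying $\mathfrak{a}\,\mathcal{O}_{k,m'} = c\,\mathfrak{b}$ is automatically integral with invariant exactly $c$. Integrality follows from $\mathfrak{a}\subseteq c\,\mathfrak{b}\subseteq c\,\mathcal{O}_{k,m'} = c\,{\mathbb Z}\,{\bf 1} + m\,\mathcal{O}_k\subseteq\mathcal{O}_{k,m}$; and the primality $1\in\mathfrak{b}+m'\mathcal{O}_k$ yields, after multiplying by $c$ and using $\mathfrak{a}\,\mathcal{O}_k = c\,\mathfrak{b}\,\mathcal{O}_k$ to simplify $\mathfrak{a}\,\mathcal{O}_{k,m'}+m\,\mathcal{O}_k = \mathfrak{a}+m\,\mathcal{O}_k$, that $c\in\mathfrak{a}+m\,\mathcal{O}_k$, so $\mathfrak{a}+m\,\mathcal{O}_k = c\,\mathcal{O}_{k,m'}$ and the invariant is $c$. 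Hence the fiber over $\mathfrak{b}$ is precisely the set of invertible $\mathcal{O}_{k,m}$-ideals lying under $c\,\mathfrak{b}$ and lying in the class $\mathfrak{A}$.

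To count this set I would fix one solution $\mathfrak{a}_0$, which exists because $\varphi_{k,m'\!,m}$ is surjective: some class of $Cl_{k,m}$ maps to $[\,c\,\mathfrak{b}\,]$, and a suitable scalar multiple $\gamma\mathfrak{a}_1$ of a representative lies under $c\,\mathfrak{b}$. Then all invertible $\mathcal{O}_{k,m}$-ideals lying under $c\,\mathfrak{b}$ are exactly the $\mathfrak{a}_0\mathfrak{u}$ with $\mathfrak{u}\in U(\,\mathcal{O}_{k,m'},\mathcal{O}_{k,m}\,)$, and imposing $[\,\mathfrak{a}_0\mathfrak{u}\,] = \mathfrak{A}$ amounts to fixing the image of $\mathfrak{u}$ under $\lambda : U(\,\mathcal{O}_{k,m'},\mathcal{O}_{k,m}\,)\to Cl_{k,m}$ at the element $\mathfrak{A}\,[\,\mathfrak{a}_0\,]^{-1}$ of $\mbox{Ker}\,\varphi_{k,m'\!,m}$. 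By the exact sequence preceding $(2.1)$, the image of $\lambda$ is $\mbox{Ker}\,\varphi_{k,m'\!,m}$ and $\mbox{Ker}\,\lambda$ is the image of $\mathcal{O}_{k,m'}^{\,*}$, of order $[\,\mathcal{O}_{k,m'}^{\,*}:\mathcal{O}_{k,m}^{\,*}\,]$; since the prescribed target lies in the image, the fiber is a coset of $\mbox{Ker}\,\lambda$ and has exactly this cardinality. As all ideals in a fiber share the norm $c^2\,{\bf N}\mathfrak{b}$, the multiplicity factors out cleanly, and summing ${\bf N}\mathfrak{a}^{-s}$ first over each fiber, then over $\mathfrak{b}$, and finally over $m'\,|\,m$ assembles the four ingredients into the right-hand side of $(5.3)$. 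The main obstacle is the fiber count of this last paragraph; once the automatic-integrality fact and the identification $\mbox{Ker}\,\lambda\simeq\mathcal{O}_{k,m'}^{\,*}/\mathcal{O}_{k,m}^{\,*}$ are in place, the remaining bookkeeping is routine.
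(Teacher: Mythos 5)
Your proposal is correct and follows essentially the same route as the paper: both group the ideals of $\mathfrak{A}$ according to the invariant $c$ of Lemma $5.2$, pass to $\mathfrak{b} = c^{-1}\mathfrak{a}\,\mathcal{O}_{k,m'}$, verify that every invertible $\mathcal{O}_{k,m}$-ideal under $c\,\mathfrak{b}$ is automatically integral with invariant $c$, and count each fiber as $[\,\mathcal{O}_{k,m'}^{\,*}:\mathcal{O}_{k,m}^{\,*}\,]$. The only difference is one of detail: you justify the fiber count explicitly via the exact sequence preceding $(2.1)$ (through the map $U(\,\mathcal{O}_{k,m'},\mathcal{O}_{k,m}\,)\to Cl_{k,m}$, whose kernel is $\mathcal{O}_{k,m'}^{\,*}/\mathcal{O}_{k,m}^{\,*}$ and whose image is $\mbox{Ker}\,\varphi_{k,m'\!,m}$), a step the paper dismisses as easy to see.
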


\begin{proof}
 Let ${\mathfrak a }$ be an integral invertible $\mathcal{O}_{k,m}$-ideal
  belonging to $\mathfrak{A}\, $.
 Let $c$ and $m' $ be positive integers
 such that
 $ ( \, {\mathfrak a} + m\,\mathcal{O}_{k}) \,\cap  \, {\mathbb Z}\,{\bf 1}
  = c \, {\mathbb Z}\,{\bf 1} \,$ and $m' = m\,/\,c\,$.
 Then Lemma $5.2\,$ implies that
 ${\mathfrak b} = c^{-1} {\mathfrak a}\,\mathcal{O}_{k,m'}
  \in \varphi_{k,m'\!,m\,}( \,\mathfrak{A}\, )$
 is an integral ideal of $\mathcal{O}_{k,m'}$
 prime to $ m' \,\mathcal{O}_{k}\,$.
 Conversely, given  integers $c$ and $m'$ with
  $c \,m' =m\,$,
  and given integral $\mathcal{O}_{k,m'}\,$-ideals
  $ {\mathfrak b} \in \varphi_{k,m'\!,m}\,( \,\mathfrak{A}\, )$
  prime to $ m' \,\mathcal{O}_{k}\,$,
  all the invertible $\mathcal{O}_{k,m}$-ideals
  ${\mathfrak a }$ under $  c\,{\mathfrak b}  $ are integral $\mathcal{O}_{k,m}$-ideals,
  as  $ c\, {\mathfrak b} \subseteq c \,\mathcal{O}_{k,m'} \subseteq \mathcal{O}_{k,m}\,$.
 Moreover, all these $\mathcal{O}_{k,m}$-ideals
 $ {\mathfrak a}  $ satisfy
 \begin{eqnarray*}
  \mathcal{O}_k  ( \,  {\mathfrak a} +  m \,\mathcal{O}_{k} \, )
  = c \, \mathcal{O}_{k} ( \,{\mathfrak b} + m' \,\mathcal{O}_{k} \, )
  = c \, \mathcal{O}_{k} \, ,
\end{eqnarray*}
 hence
  $ ( \, {\mathfrak a} + m \,\mathcal{O}_{k} \, )  \cap {\mathbb Z}\,{\bf 1}
    = c \, {\mathbb Z}\,{\bf 1} \,$ by Lemma $5.2\,$.
 It is easy to see that   under each $\mathcal{O}_{k,m'}$-ideal
  $ m\, {\mathfrak b}$,
  there are precisely
  $[\,\mathcal{O}_{k, m'}^{\,*} \!:\mathcal{O}_{k,m}^{\,*}\, ]$
  invertible $\mathcal{O}_{k,m}\,$-ideals lying in the class $\mathfrak{A} \, $.
  The Lemma now follows by grouping the terms in
  $ \zeta( \,s \, , \mathfrak{A} \,, \mathcal{O}_{k,m}\, ) $
  according to  the smallest
  positive integer contained in
  $ {\mathfrak a} + m \,\mathcal{O}_{k}\, $.
\end{proof}

\begin{theorem}
Let $\chi$ be a character of $Cl_{k,m}$ of conductor $f$, then
\begin{eqnarray*}
 L( \, s \,,\chi \,,\mathcal{O}_{k , m}\,) =
  \sum_{f \,|\,m'|\,m}
 \frac{ \,|\,U(\,\mathcal{O}_{k, m' },\mathcal{O}_{k,m}\,) \,|\,
  }{ (\,m\,/\,m')^{\, 2 s } }\,
  L^{*}( \, s \,,\chi \,,\mathcal{O}_{k , m'})\,.
 \end{eqnarray*}
 In particular, if $\chi$ is a primitive character of $Cl_{k,m}\,$,
 then we have
\begin{eqnarray*}
 L( \, s \,,\chi \,,\mathcal{O}_{k , m}\,) =
  L^{*}( \, s \,,\chi \,,\mathcal{O}_{k , m}\,)\,.
 \end{eqnarray*}
\end{theorem}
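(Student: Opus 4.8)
The plan is to begin from the spectral definition $L(\,s\,,\chi\,,\mathcal{O}_{k,m}\,) = \sum_{\mathfrak{A} \in Cl_{k,m}} \chi(\mathfrak{A})\, \zeta(\,s\,,\mathfrak{A}\,,\mathcal{O}_{k,m}\,)$ and to insert the decomposition of the partial zeta function supplied by Lemma $5.3$. Interchanging the finite sum over $\mathfrak{A}$ with the sum over divisors $m'\mid m$, the problem reduces to evaluating, for each fixed $m'$, the inner sum $\sum_{\mathfrak{A} \in Cl_{k,m}} \chi(\mathfrak{A})\, \zeta^{*}(\,s\,,\varphi_{k,m',m}(\mathfrak{A})\,,\mathcal{O}_{k,m'}\,)$.

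The key step is to regroup this inner sum according to the image class $\mathfrak{B} = \varphi_{k,m',m}(\mathfrak{A}) \in Cl_{k,m'}$. Since $\varphi_{k,m',m}$ is surjective, each fiber is a coset of $\ker \varphi_{k,m',m}$, so the sum factors as $\sum_{\mathfrak{B} \in Cl_{k,m'}} \zeta^{*}(\,s\,,\mathfrak{B}\,,\mathcal{O}_{k,m'}\,)\,\bigl( \sum_{\varphi_{k,m',m}(\mathfrak{A}) = \mathfrak{B}} \chi(\mathfrak{A}) \bigr)$. Orthogonality of characters over a coset shows that the bracketed sum vanishes unless $\chi$ is trivial on $\ker \varphi_{k,m',m}$, in which case it equals $|\ker \varphi_{k,m',m}| \cdot \chi(\mathfrak{B})$. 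Here the conductor enters: $\chi$ is trivial on $\ker \varphi_{k,m',m}$ exactly when $\chi$ descends to a character of $Cl_{k,m'}$, which by the compatible inverse system of Lemma $2.4$ and the definition of conductor occurs if and only if $f \mid m'$. This is precisely the constraint $f \mid m' \mid m$ in the statement.

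For the surviving terms I would then identify the coefficient $[\,\mathcal{O}_{k,m'}^{\,*} : \mathcal{O}_{k,m}^{\,*}\,] \cdot |\ker \varphi_{k,m',m}|$ with $|U(\,\mathcal{O}_{k,m'},\mathcal{O}_{k,m}\,)|$. The five-term exact sequence preceding formula $(2.1)$ yields $|\ker \varphi_{k,m',m}| = |Cl_{k,m}| / |Cl_{k,m'}|$, and combining this with $(2.1)$ collapses the product to exactly $|U(\,\mathcal{O}_{k,m'},\mathcal{O}_{k,m}\,)|$. The recombined sum over $\mathfrak{B}$ reassembles into $L^{*}(\,s\,,\chi\,,\mathcal{O}_{k,m'}\,)$, delivering the claimed identity. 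The \emph{in particular} statement is then immediate: when $\chi$ is primitive its conductor is $f = m$, so the only admissible divisor is $m' = m$, the factor $(m/m')^{2s} = 1$, and $U(\,\mathcal{O}_{k,m},\mathcal{O}_{k,m}\,)$ is trivial, leaving $L = L^{*}$.

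The main obstacle is the conductor argument, namely justifying cleanly that the vanishing of the coset character sum is governed exactly by the divisibility $f \mid m'$; everything else is bookkeeping. This rests on translating the phrase ``$\chi$ is defined at $Cl_{k,m'}$'' into the condition $\ker \varphi_{k,m',m} \subseteq \ker \chi$ through the inverse-system formalism of Section $2$, so that the set of $m'$ for which the fiber sums survive is precisely the set of multiples of the conductor $f$.
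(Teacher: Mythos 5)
Your proposal is correct and follows the paper's own proof essentially step for step: the same insertion of Lemma $5.3$ into $L(\,s\,,\chi\,,\mathcal{O}_{k,m}\,)=\sum_{\mathfrak{A}}\chi(\mathfrak{A})\,\zeta(\,s\,,\mathfrak{A}\,,\mathcal{O}_{k,m}\,)$, the same regrouping over fibers of $\varphi_{k,m'\!,m}$ with character orthogonality, the same translation of the survival condition into $f\,|\,m'$ via Lemma $2.4$, and the same use of $(2.1)$ to identify $[\,\mathcal{O}_{k,m'}^{\,*}:\mathcal{O}_{k,m}^{\,*}\,]\,|\,\mathrm{Ker}\,\varphi_{k,m'\!,m}\,|$ with $|\,U(\,\mathcal{O}_{k,m'},\mathcal{O}_{k,m}\,)\,|$. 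No gaps; nothing further is needed.
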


\begin{proof}
 By Lemma $5.3\,$, we have
\begin{eqnarray*}
 && \!\!\!\!  \!\!\!\!
  L( \, s \,, \chi  \,, \mathcal{O}_{k , m}\,) \,\,
  =  \sum_{  \mathfrak{A} \in Cl_{k,m} }
  \,\chi (\, \mathfrak{A} \, )\,
  \zeta( \,s \, , \mathfrak{A} \,, \mathcal{O}_{k,m}\, ) \\
 & = &  \sum_{\,m'\,|\,m} \,
  \frac{ \,
   [\,\,\mathcal{O}_{k, m'}^{\,*}:\mathcal{O}_{k,m}^{\,*}\,\, ] \,
   }{ (\,m\,/m')^{ 2 s } }
  \sum_{  \mathfrak{A}' \,\in \, Cl_{k,m'} } \,
 \Big( \!\! \!\! \sum_{
  \substack{ \mathfrak{A} \,\in \,  Cl_{k,m} \\
   \varphi_{k,m'\!,m}\,(\, \mathfrak{A}\, )\, = \, \mathfrak{A}' }  }
  \!\!\!\! \chi \,(\, \mathfrak{A}\, )\,\;    \Big)
  \,\, \zeta^{\,*}( \,s \, ,  \mathfrak{A}' \,, \mathcal{O}_{k,m'} )\,.
\end{eqnarray*}
 Since $  \varphi_{k,m'\!,m}$ is surjective,
  given  $\mathfrak{A}'\in Cl_{k,m'}$, there  exists a
  $ \mathfrak{A}^* \in Cl_{k,m}\,$ such that
  $ \varphi_{k,m'\!,m}(\, \mathfrak{A}^*  )\, = \, \mathfrak{A}'$.
 Thus we have
\begin{eqnarray*}
 \sum_{ \substack{
   \mathfrak{A}\, \in \, Cl_{k,m} \\
   \varphi_{k,m'\!,m}(\, \mathfrak{A}\, )\, = \, \mathfrak{A}' }  }
  \!\! \!\!\!\! \chi \,(\, \mathfrak{A}\, )
   & = &
   \chi \,(\, \mathfrak{A}^* ) \!\!
   \sum_{  \mathfrak{B}\, \in \,
       \mbox{\scriptsize{Ker}} \,\varphi_{k,m'\!,m}  }
  \!\! \chi \,(\, \mathfrak{B}\, )\\
   & = &
 \left\{
   \begin{array}{ll}
     {\chi}(\,\mathfrak{A}^*)
     \,|\,\mbox{Ker}\,
     \varphi_{k,m'\!,m}
    \,|\,,
   &  \mbox{if ${\chi}$ is trivial on
   $\mbox{Ker}\,\varphi_{k,m'\!,m}$};  \\[4pt]
      \qquad \qquad 0\,, & \mbox{ otherwise. }
   \end{array}
\right.
 \end{eqnarray*}
 The sum above is non-zero only if  $\chi $ is defined at
 $Cl_{k,m'}$.
 This happens precisely when $m'$ is a multiple of $f\,$.
 The Theorem  follows by applying ($2.1$).
\end{proof}
\begin{remark}
 Let $ \mu $ denote the M$\ddot{\mbox{o}}$bius function on ${\mathbb
 Z}$ and let $\chi$ be a character of $Cl_{k,m}$ of conductor $f$.
 Then we have
\begin{eqnarray*}
 L^{*}( \, s \,,\chi \,,\mathcal{O}_{k , m}\,) =
 \sum_{f \,|\,m'|\,m} \mu (\, m/m' )\,
\frac{ \,|\,U(\,\mathcal{O}_{k, m' },\mathcal{O}_{k,m}\,) \,|\,
  }{ (\,m\,/m')^{ 2 s } }\, L( \, s \,,\chi \,,\mathcal{O}_{k , m'})\,.
 \end{eqnarray*}
 Theorem $5.4$ and the above formula can be vastly generalized to orders of
  arbitrary $\acute{\mbox{e}}$tale algebras. However, Lemma $2.4$
  no longer holds for  orders of $\acute{\mbox{e}}$tale algebras of high degrees.
 \end{remark}

\subsection*{Proof of  Theorem 5.1}

 We begin with  a technical Lemma.

\begin{lemma} For every positive integer $d$, we have
\begin{eqnarray}
  \sum_{ u \,= \,1 }^{ \infty } \,
  \frac{ \,|\,U(\,\mathcal{O}_{k, d }\,,\mathcal{O}_{k,u d}\,) \,|\,
  }{ u^s }
  = \zeta ( s - 1 )
 \,\prod_{ p  \,\nmid \,d } \Big( \,1 - \Big(\frac{\, \Delta_k }{p} \Big)
 \,\frac{1}{p^s}\, \,\Big) \,,
 \end{eqnarray}
  where in the case $k = {\mathbb Q}^2$, it is understood that
  $\big(\frac{\, \Delta_k }{p} \big) =1 $ for all primes $p\,$.
\end{lemma}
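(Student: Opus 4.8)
The plan is to first reduce the group order $|U(\mathcal{O}_{k,d},\mathcal{O}_{k,ud})|$ to an explicit multiplicative expression, and then recognize the resulting Dirichlet series as an Euler product. Since $\mathcal{O}_{k,ud}\subseteq\mathcal{O}_{k,d}$, formula $(2.1)$ gives $|U(\mathcal{O}_{k,d},\mathcal{O}_{k,ud})|=[\mathcal{O}_{k,d}^*:\mathcal{O}_{k,ud}^*]\,|Cl_{k,ud}|/|Cl_{k,d}|$. I would feed in the classical class number formula for quadratic orders, $|Cl_{k,m}|=\frac{m\,|Cl_{k,1}|}{[\mathcal{O}_k^*:\mathcal{O}_{k,m}^*]}\prod_{p|m}(1-(\frac{\Delta_k}{p})\frac1p)$, which in the split case $k=\mathbb{Q}^2$ is read off from the isomorphism $Cl_{\mathbb{Q}^2,m}\cong(\mathbb{Z}/m\mathbb{Z})^*/\{\pm1\}$ under the convention $(\frac{\Delta_k}{p})=1$. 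Forming the ratio $|Cl_{k,ud}|/|Cl_{k,d}|$ and using $[\mathcal{O}_{k,d}^*:\mathcal{O}_{k,ud}^*]=[\mathcal{O}_k^*:\mathcal{O}_{k,ud}^*]/[\mathcal{O}_k^*:\mathcal{O}_{k,d}^*]$, the unit indices cancel completely and every local factor at a prime dividing $d$ drops out, leaving the key identity
\[
  |U(\mathcal{O}_{k,d},\mathcal{O}_{k,ud})| = u \prod_{\substack{p\,|\,u\\ p\,\nmid\, d}}\Big(1-\Big(\frac{\Delta_k}{p}\Big)\frac1p\Big).
\]

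With this closed form the left side of $(5.5)$ becomes $\sum_{u\ge1}u^{1-s}\prod_{p|u,\,p\nmid d}c_p$, where I abbreviate $c_p=1-(\frac{\Delta_k}{p})\frac1p$. The summand is multiplicative in $u$ ($u^{1-s}$ is completely multiplicative and the product depends only on the prime support of $u$), so I would expand it as an Euler product $\prod_p\big(\sum_{j\ge0}g_p(j)\,p^{-j(s-1)}\big)$, with $g_p(0)=1$ and, for $j\ge1$, $g_p(j)=1$ when $p\mid d$ and $g_p(j)=c_p$ when $p\nmid d$. For $p\mid d$ the local factor is the geometric series $(1-p^{1-s})^{-1}$. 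For $p\nmid d$, summing the geometric tail gives $1+c_p\frac{p^{-(s-1)}}{1-p^{-(s-1)}}=\frac{1-(1-c_p)p^{-(s-1)}}{1-p^{1-s}}$, and since $1-c_p=(\frac{\Delta_k}{p})\frac1p$ this equals $\frac{1-(\frac{\Delta_k}{p})p^{-s}}{1-p^{1-s}}$.

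Reassembling over all primes separates the product as $\big(\prod_p(1-p^{1-s})^{-1}\big)\prod_{p\nmid d}(1-(\frac{\Delta_k}{p})p^{-s})=\zeta(s-1)\prod_{p\nmid d}(1-(\frac{\Delta_k}{p})p^{-s})$, which is exactly the right side of $(5.5)$. The hard part will be the first step: establishing the closed form for $|U|$ rests on the quadratic class number formula and a careful accounting of the unit-index cancellation, and the split case $k=\mathbb{Q}^2$ must be checked separately---including the small-conductor anomalies $m\in\{1,2\}$ where $-1\equiv1$---to confirm that the convention $(\frac{\Delta_k}{p})=1$ renders the same formula valid. Once $|U|$ is pinned down the Euler-product manipulation is routine, all series converging absolutely for $\Re(s)$ large, where the identity is proved; it then extends by analytic continuation.
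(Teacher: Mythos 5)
Your proposal is correct and follows essentially the same route as the paper: both derive the closed form $|\,U(\,\mathcal{O}_{k,d}\,,\mathcal{O}_{k,ud}\,)\,| = u\prod_{p\,|\,u,\; p\,\nmid\, d}\bigl(1-\bigl(\frac{\Delta_k}{p}\bigr)\frac{1}{p}\bigr)$ by combining $(2.1)$ with the class number formula for quadratic orders (the paper cites Lang for this step and notes the $k=\mathbb{Q}^2$ convention, exactly as you verify by hand), and then evaluate the Dirichlet series by multiplicativity. The only cosmetic difference is that the paper splits $u=u_1u_2$ into its $d$-supported and prime-to-$d$ parts before summing, whereas you expand directly into an Euler product over all primes; the local computations are identical.
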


\begin{proof}
 Apply the  class number formula of orders (see \cite[p.95]{Lang2}) to (2.1),
  we get
\begin{eqnarray*}
 |\,U(\,\mathcal{O}_{k,d}\,,\mathcal{O}_{k,c}\,) \,|
 = \frac{c}{d} \,\prod_{\substack{ p\,|\,c \\
 p  \,\nmid \,d } } \Big( \,1 - \Big(\frac{ \,\Delta_k }{p} \Big)
 \,\frac{1}{p}\, \,\Big) \,
 \end{eqnarray*}
  for every multiple $c$ of $d\,$.
 The formula is also valid when $k = {\mathbb Q}^2$ by defining
  $\big(\frac{\, \Delta_k }{p} \big) =1 $ for all  $p\,$.
 Write  $u $ in the form $  u_1 u_2 \,$, where $u_1 , u_2 \in {\mathbb Z}^{+}$
  such that
  \,all prime factors of $ u_1 $ are divisors of $ d \,$  and
   $(\,u_2 \,, d \,) = 1 \,$,
  the left hand side of $(5.3)$  becomes
\begin{eqnarray*}
 && \sum_{u_1 } \frac{1}{ u_1^{s-1} }
    \sum_{ u_2 } \frac{1}{ u_2^{s-1}} \,
    \prod_{p\,|\,u_2 } \Big( \,1 - \Big(\frac{ \,\Delta_k }{\,p} \Big)
       \,\frac{1}{p}\, \,\Big) \\
 & = & \prod_{ p \, | \, d }
    \Big( \,1 - \frac{ 1 }{\,p^{s-1}} \Big)^{-1} \;
    \prod_{ p \, \nmid \, d }
      \bigg( \,1 +  \Big( \,\frac{ 1 }{\,p^{s-1}}
         + \frac{ 1}{\,p^{2(s-1)}} + \cdots \, \Big)
    \Big( \,1 - \Big(\frac{\, \Delta_k }{p} \,\Big)
     \frac{ 1 }{p}\, \Big)\;
       \bigg)
 \end{eqnarray*}
 which is  easily seen equal to   the right hand side of $(5.3)$.
\end{proof}

 We now  return to the proof of Theorem $5.1\,$.
 Let $\chi$ be a character of the class groups of orders of $k$ of conductor $f$.
 Apply Theorem 5.4 and Lemma 5.6 to the left hand side of (5.1), we obtain
\begin{eqnarray*}
& & \sum_{d \,=\, 1}^{\infty} \frac{ 1 }{\,\, d^{\,s}\, }\,
 L( \, s\,,\chi\,,\mathcal{O}_{k ,  f d}\, ) \\
& = & \sum_{d \,=\, 1}^{\infty} \frac{ 1 }{\,\, d^{\,s}\, }
 \sum_{m \,| \, d }
 \frac{ \,|\,U(\,\mathcal{O}_{k, f m }\,,\mathcal{O}_{ k , f d }\,) \,|\,
  }{ ( \, d \, / m  )^{ \,2 s } } \, L^{*}( \, s \,,\chi \,,\mathcal{O}_{k , f m}\,)
  \\
 & = & \sum_{m \,=\, 1}^{\infty} \frac{ 1 }{\,\, m^{\,s}\, }\, \sum_{u \,=\, 1}^{\infty}
  \frac{ \,|\,U(\,\mathcal{O}_{k,  f m  }\,,\mathcal{O}_{ k , u f m }\,) \,|\,
  }{  \, u  ^{\, 3 s } } \, L^{*}( \, s \,,\chi \,,\mathcal{O}_{k , f
  m}\,)\, \\
 & = &  \zeta ( \,3 s - 1 ) \sum_{m \,=\, 1}^{\infty}
    \frac{ 1 }{\,\, m^{\,s}\, }
    \prod_{ p  \,\nmid \, f m }
    \Big( \,1 - \Big(\frac{ \Delta_k }{p} \Big) \,\frac{1}{p^{ \,3 s } }\, \,\Big)
   \, \prod_{ {\mathfrak p} \, | \, p } \, \Big( \, 1
 - \frac{ \, \chi ( \mathfrak p ) }{ \, {\bf N} {\mathfrak p }^{ s} } \,
 \Big)^{-1}   ,
\end{eqnarray*}
  where in the last product  we  let $\mathfrak p $ run over all prime ideals
  of $\mathcal{O}_{ k , f  }$  dividing  $p\,$,
  and in the case $k = {\mathbb Q}^2$, we have
  $\,\big(\frac{\, \Delta_k }{p} \big) =1 $ for all $p\,$.
 By bringing those Euler factors  prime to $f$ out of the summation, we get
\begin{equation}
\begin{split}
& \zeta ( \,3 s - 1 )\, L^{*}( \, s \,,\chi \,,\mathcal{O}_{k , f
  }\,)
   \, \prod_{ q  \,\nmid \, f }
    \Big( \,1 - \Big(\frac{ \Delta_k }{q} \Big) \,\frac{1}{q^{ \,3 s } }\, \,\Big)
   \, \\
  & \quad \qquad \times \sum_{m \,=\, 1}^{\infty}
    \frac{ 1 }{\,\, m^{\,s}\, }
    \prod_{ \substack{ p\,|\,m \\
 p  \,\nmid \,f } }
    \Big( \,1 - \Big(\frac{ \Delta_k }{p} \Big) \,\frac{1}{p^{ \,3 s } }\,
    \,\Big)^{-1}
   \, \prod_{ {\mathfrak p} \, | \, p } \, \Big( \, 1
 - \frac{ \, \chi ( \mathfrak p ) }{ \, {\bf N} {\mathfrak p }^{ s} } \,
 \Big)\,.
 \end{split}
 \end{equation}
 Write $m$ in the form $m_1 m_2\,$, where $m_1 , m_2 \in {\mathbb Z}^{+}$
  such that all prime factors of $ m_1 $ divides  $ f \,$  and
  $(\,m_2 \,, f \,) = 1 \,$,
  the summation part in $(5.4)$ becomes
 \begin{eqnarray*}
&& \sum_{m_1 } \,\, \frac{1}{m_1^s} \;\; \sum_{m_2 } \,\,
\frac{1}{m_2^s} \,\prod_{  p\,|\,m_2  }
    \Big( \,1 - \Big(\frac{\, \Delta_k }{p} \Big) \,\frac{1}{p^{ \,3 s } }\,
    \,\Big)^{-1}
    \prod_{ {\mathfrak p} \, | \, p } \, \Big( \, 1
 - \frac{ \, \chi ( \mathfrak p ) }{ \, {\bf N} {\mathfrak p }^{ s} } \,
 \Big) \\[4pt]
 & = & \prod_{ p \, | \, f } \Big( \,1 - \frac{ 1 }{\,p^{s}}\,
 \Big)^{-1} \;
 \prod_{ p \, \nmid \, f } \bigg( \,\,1 \,+ \, \frac{ 1 }{\,p^{s}-1}
 \,\,\Big( \,1 - \Big(\frac{\, \Delta_k }{p} \Big) \,\frac{1}{p^{ \,3 s } }\,
    \,\Big)^{\!-1}
    \prod_{ {\mathfrak p} \, | \, p } \, \Big( \, 1
 - \frac{ \, \chi ( \mathfrak p ) }{ \, {\bf N} {\mathfrak p }^{ s} } \,
 \Big)\, \bigg) \\[4pt]
 & = & \zeta (  s  ) \,\prod_{ p \, \nmid \, f }
 \bigg( \,\,1\, - \,\frac{ 1 }{\,p^{s}} \,+ \, \frac{ 1 }{\,p^{s}} \,
 \Big( \,1 - \Big(\frac{ \,\Delta_k }{p} \Big) \,\frac{1}{p^{ \,3 s } }\,
    \,\Big)^{-1}
    \prod_{ {\mathfrak p} \, | \, p } \, \Big( \, 1
 - \frac{ \, \chi ( \mathfrak p ) }{ \, {\bf N} {\mathfrak p }^{ s} } \,
 \Big)\, \bigg)\,.\\[4pt]
\end{eqnarray*}
 Substitute  back to $(5.4)$, we obtain
\begin{eqnarray*}
 && \sum_{d \,=\, 1}^{\infty} \frac{ 1 }{\,\, d^{\,s}\, }\,
 L( \, s\,,\chi\,,\mathcal{O}_{k ,  f d}\, ) \,
  =  \,\zeta (  s  ) \,\zeta ( \,3 s - 1 )\,
   L^{*}( \, s \,,\chi \,,\mathcal{O}_{k , f
  }\,) \\
  && \qquad \qquad \qquad \times \,\prod_{ p \, \nmid \, f }
 \bigg( \,\Big( \,1\, - \,\frac{ 1 }{\,p^{s}} \, \Big) \,
 \Big( \,1 - \Big(\frac{ \,\Delta_k }{p} \Big) \,\frac{1}{p^{ \,3 s } }\,
    \,\Big)\,+ \, \frac{ 1 }{\,p^{s}} \,
    \prod_{ {\mathfrak p} \, | \, p } \, \Big( \, 1
 - \frac{ \, \chi ( \mathfrak p ) }{ \, {\bf N} {\mathfrak p }^{ s} } \,
 \Big)\, \bigg)\,.
 \end{eqnarray*}
 The proof of Theorem $5.1$ is now reduced to the following Lemma.

\begin{lemma}
 If $\,\chi$ is a  character of $Cl_{k,f}$ of odd order and  $p$ is a
  rational prime not dividing $f\,$, then we have
\begin{eqnarray*}
  \quad \Big( \,1\, - \,\frac{ 1 }{\,p^{\,s}} \, \Big)\,
  \Big( \,1 - \Big(\frac{ \,\Delta_k }{p} \Big)\,\frac{1}{p^{ \,3 s } }\,\,\Big)
  \, + \, \frac{ 1 }{\,p^{\,s}} \,
    \prod_{ {\mathfrak p} \, | \, p } \,
    \Big( \, 1 - \frac{ \, \chi ( \mathfrak p ) }{ \, {\bf N} {\mathfrak p }^{ s} }\,\Big)\,
  = \,\prod_{ {\mathfrak p} \, | \, p } \,
   \Big( \, 1 - \frac{ \, \chi ( \mathfrak p ) }{ \, {\bf N} {\mathfrak p }^{ \,2s} } \, \Big)\,,
\end{eqnarray*}
 where in the  products  $\mathfrak p $ runs through  prime
  ideals of $ \mathcal{O}_{ k , f }$ dividing $p\,$,
  and in the case $k = {\mathbb Q}^2$, it is understood that
  $\big(\frac{\, \Delta_k }{p} \big) =1 $ for all primes  $p\,$.
\end{lemma}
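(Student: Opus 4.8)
The plan is to reduce the claimed identity to a short, elementary verification by splitting into the three possible splitting types of $p$ in $k$. Since $p\nmid f$, the prime $p$ is prime to the conductor $f\mathcal{O}_{k}\,$, so by Dedekind's Lemma (Lemma $2.1$) and Corollary $2.2$ the factorization of $p\,\mathcal{O}_{k,f}$ into primes of $\mathcal{O}_{k,f}$ mirrors exactly the factorization of $p\,\mathcal{O}_{k}$ in the maximal order, which is governed by the value $\big(\frac{\Delta_k}{p}\big)\in\{1,-1,0\}$. Accordingly I would treat three cases, and in each case first pin down the values $\chi(\mathfrak p)$ on the primes $\mathfrak p\mid p$ using that a class-group character is trivial on principal ideals, and only then expand both sides as polynomials in $t=p^{-s}$.

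First, the split case $\big(\frac{\Delta_k}{p}\big)=1$ (which also covers $k=\mathbb{Q}^2$ under the stated convention): here $p\,\mathcal{O}_{k,f}=\mathfrak p_1\mathfrak p_2$ with ${\bf N}\mathfrak p_i=p\,$. Since $\mathfrak p_1\mathfrak p_2=p\,\mathcal{O}_{k,f}$ is principal, $\chi(\mathfrak p_1)\chi(\mathfrak p_2)=1\,$, so writing $a=\chi(\mathfrak p_1)$ the right-hand side is $(1-at^2)(1-a^{-1}t^2)=1-(a+a^{-1})t^2+t^4\,$, while the left-hand side is $(1-t)(1-t^3)+t(1-at)(1-a^{-1}t)\,$; a direct expansion shows both equal $1-(a+a^{-1})t^2+t^4\,$. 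Next, the inert case $\big(\frac{\Delta_k}{p}\big)=-1$: here $p\,\mathcal{O}_{k,f}=\mathfrak p$ is prime with ${\bf N}\mathfrak p=p^2\,$, and since $\mathfrak p=p\,\mathcal{O}_{k,f}$ is itself principal we get $\chi(\mathfrak p)=1$ with no hypothesis on the order; substituting this and $\big(\frac{\Delta_k}{p}\big)=-1$ collapses both sides to $1-t^4\,$.

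Finally, the ramified case $\big(\frac{\Delta_k}{p}\big)=0$: here $p\,\mathcal{O}_{k,f}=\mathfrak p^2$ with ${\bf N}\mathfrak p=p\,$, so $\chi(\mathfrak p)^2=\chi(\mathfrak p^2)=\chi(p\,\mathcal{O}_{k,f})=1\,$. This is the one place where the hypothesis that $\chi$ has \emph{odd} order is essential: an element of odd order that squares to $1$ must itself be $1\,$, which forces $\chi(\mathfrak p)=1\,$. With $\chi(\mathfrak p)=1$ and $\big(\frac{\Delta_k}{p}\big)=0\,$, both sides reduce to $1-t^2\,$, completing the verification.

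The only genuinely substantive step is recognizing that the odd-order assumption is exactly what rules out $\chi(\mathfrak p)=-1$ in the ramified case; without it the identity would fail at ramified primes. Everything else is bookkeeping of the splitting type followed by routine polynomial expansion, so I expect the write-up to be short once the three cases are separated.
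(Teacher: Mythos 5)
Your proof is correct and follows essentially the same route as the paper: the paper likewise splits by the splitting type of $p$ (using that primes of $\mathcal{O}_{k,f}$ away from the conductor mirror those of $\mathcal{O}_k$), uses $\chi(\mathfrak{p}_1)\,\chi(\mathfrak{p}_2)=\chi(p\,\mathcal{O}_{k,f})=1$ in the split case and $\chi(\mathfrak{p})=1$ in the non-split cases, and then finishes with polynomial algebra. The only cosmetic difference is that the paper first records the uniform identity $\prod_{\mathfrak{p}\mid p}\bigl(1-\chi(\mathfrak{p})\,\mathbf{N}\mathfrak{p}^{-s}\bigr)=1-a_p\,p^{-s}+\bigl(\frac{\Delta_k}{p}\bigr)p^{-2s}$, determines $a_p$ case by case, and then verifies the lemma once in terms of $a_p$, rather than expanding both sides three times.

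One caveat: your closing claim that without the odd-order hypothesis ``the identity would fail at ramified primes'' is false. In the ramified case, writing $t=p^{-s}$ and using $\bigl(\frac{\Delta_k}{p}\bigr)=0$, the left side is $(1-t)+t\bigl(1-\chi(\mathfrak{p})\,t\bigr)=1-\chi(\mathfrak{p})\,t^2$, which equals the right side $1-\chi(\mathfrak{p})\,p^{-2s}$ for \emph{any} value of $\chi(\mathfrak{p})$, not just $\chi(\mathfrak{p})=1$. So the odd-order hypothesis (which the paper invokes in exactly the same way, to force $\chi(\mathfrak{p})=1$ there) is harmless but not actually needed in that case; this does not affect the validity of your proof of the lemma as stated.
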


\begin{proof}
 We  claim  that for every $p \nmid f$, there exists an  $a_p \in {\mathbb C}$ such that
\begin{eqnarray}
 \prod_{ {\mathfrak p} \, | \, p } \,
 \Big( \, 1 - \frac{ \, \chi ( \,\mathfrak p ) }{ \, {\bf N} {\mathfrak p }^{ s} }\, \Big)
  =  1 - \frac{a_p}{p^{\,s}} + \Big(\frac{ \,\Delta_k }{p} \Big) \frac{1}{p^{\,2 s}}\,.
\end{eqnarray}
 In the case $ p \,\mathcal{O}_{k,f} = {\mathfrak p}_1 \, {\mathfrak p}_2 $,
  where ${\mathfrak p}_1 \,, {\mathfrak p}_2 $ are  prime ideals of $\mathcal{O}_{k,f}\,$,
  we have $ \big(\frac{ \,\Delta_k }{p} \big)\,=1\,$,
  $ \chi (\,{\mathfrak p}_1) \,\chi (\,{\mathfrak p}_2) = 1 $
  and so $a_p = \chi (\,{\mathfrak p}_1) + \chi (\,{\mathfrak p}_2) \,$.
 In the case $ p \,\mathcal{O}_{k,f} = {\mathfrak p} $ or $ {\mathfrak p}^2$,
  we have $ \chi (\,{\mathfrak p}) =1 $, so
\begin{eqnarray*}
 \prod_{ {\mathfrak p} \, | \, p } \,
 \Big( \, 1 - \frac{ \, \chi ( \,\mathfrak p ) }{ \, {\bf N} {\mathfrak p }^{ s}}\, \Big)
 = \prod_{ {\mathfrak p} \, | \, p } \, \Big( \, 1 - \frac{ \, 1 }{ \, {\bf N} {\mathfrak p }^{ s}}\, \Big)
 = \Big( \, 1 -  \frac{1 }{ \, p^{ s}} \,\Big)
   \,\Big( \, 1- \Big(\frac{ \,\Delta_k }{p} \Big) \,\frac{ 1 }{ \, p ^{ s} } \Big) \,.
 \end{eqnarray*}
 Thus we may take $ a_p = 1 + (\frac{ \,\Delta_k }{p} )\,$.
 The Lemma  follows easily from identity $(5.5)$.

\end{proof}

\begin{remark}
 It is interesting to observe that
  if $\chi$ is a non-trivial primitive cubic character of $Cl_{k,f}\,$,
  and $p$ is a rational prime not dividing $f\,$,  then
   $a_p$ in ($5.5$)  is given by the formula
\begin{eqnarray*}
 a_p = |\, \{ \, [\,u:v\,] \in {\mathbb P}^1(\,{\mathbb F}_p)
   \,|\, x(u,v) = 0 \,\}\,| - 1 \,.
\end{eqnarray*}
 Here   $K$ denotes  the isomorphic class
  of the cubic fields corresponding to $\chi\,$,
 \begin{eqnarray*}
 x(u,v) = x_0 u^3 + x_1 u^2 v + x_2 u v^2 + x_3 v^3
 \end{eqnarray*}
  is an integral primitive binary cubic form belonging to the incomplete canonical class
  of $K$ (i.e., there exists a generator $\theta$ of $K$ such that
  $ x(\,\theta,1)= 0$ and $ \mbox{disc}(x) = \Delta_K $, see \cite{Gao})\,.
 In fact, the  Hasse-Weil zeta function of the zero dimensional variety
  defined by $x(u,v)=0$ is
  \begin{eqnarray*}
   \zeta ( s ) \,
   L( \, s \,,\chi \,,\mathcal{O}_{k , f}\,) = \zeta_K ( s ) \, .
  \end{eqnarray*}
\end{remark}

\appendix


\section{Abelian $L$-series of number fields and
$L$-series of  orders}

 In   this section we let $k$ denote a number field and let
  $ {\mathfrak m } = {\mathfrak m }_0\,{\mathfrak m }_\infty  $
  be a  modulus in $k\,$.
 We associate to each character $\chi$ of
  $ Cl_k (\mathfrak m) = I_k(\mathfrak m) \,/P_{k,1} (\mathfrak m)\, $
 the $L$-series
\begin{eqnarray*}
 L_{k,\mathfrak m}( \,s \, , \chi \,) \; = \;
  \sum_{ ( {\mathfrak a} , {\mathfrak m }_0 ) = 1 } \,
   \chi ( \mathfrak a ) \,{\bf N} {\mathfrak a }^{- s} \,,
\end{eqnarray*}
  where ${\mathfrak a}$ runs through prime-to-${\mathfrak m }_0\,$
  integral ideals  of $\mathcal{O}_{k}\,$.
 We  let $A$ denote the $\acute{\mbox{e}}$tale algebra $ k^{\,n+1} $
  for  some   positive integer $n \,$.
 We write  $ \mathcal{O}_{A} = \mathcal{O}_{k} ^{\,n+1}\,$ for the maximal order of $A$
  and ${\bf 1} $ for the identity element of $A\,$.
 Observe that  the $\mathcal{O}_{A}$-ideals have the form
  $(\,{\mathfrak a}_0,\cdots ,{\mathfrak a}_n ) \,$,  where the
  ${\mathfrak a}_i$'s are fractional ideals of $\mathcal{O}_{k}\,$.
 We let $ \mathcal{O}_{\! A,{\mathfrak m }}$ denote an  order of $A$ of the following form
\begin{eqnarray*}
 \quad \mathcal {O}_k {\,\bf 1} + {\mathfrak m }_0
 \mathcal{O}_{A} =
 \{\,(\,\alpha_0 \,, \cdots , \alpha_n)\in \mathcal{O}_A \,|\,\,
  \alpha_i \equiv \alpha_0 \,\,\mbox{mod}\,{\mathfrak m}_0 \, ,
  \,\, 1 \leq i \leq n \,\}\,.
\end{eqnarray*}
 Note that  $ \mathcal{O}_{\! A,{\mathfrak m }}$
  is an $\mathcal{O}_k$-module
  and   has the conductor ${\mathfrak m }_0 \mathcal{O}_{A} \,$.
 Moreover, we say that an element
   $\alpha = (\,\alpha_0, \cdots ,\alpha_n) \in A^* $
 is equivariant  modulo ${\mathfrak m }_\infty\,$ if
\begin{eqnarray*}
 \quad \quad \sigma
 (\,\alpha_i \, / \alpha_0 ) > 0\, , \,\,\,\,\forall \,\,\,1 \leq i \leq n\,,\,
 \,\forall \,\, \sigma \in {\mathfrak m }_\infty \,.
\end{eqnarray*}
  Put
\begin{eqnarray*}
 \quad \quad
  P_{{\mathfrak m }_\infty} ( \,\mathcal{O}_{\! A,{\mathfrak m }})
  = \{ \, \alpha \,\mathcal{O}_{\! A,{\mathfrak m }} \, |\,
    \alpha \in A^* \mbox{ is equivariant modulo } {\mathfrak m
    }_\infty \} \,.
\end{eqnarray*}
 We call  the quotient group
   $ I (\,\mathcal{O}_{\! A,{\mathfrak m }} ) /P_{{\mathfrak m }_\infty}
    ( \,\mathcal{O}_{\! A,{\mathfrak m }} ) \,$ the narrow class group of
   $\mathcal{O}_{\! A,{\mathfrak m }}$ modulo ${\mathfrak m}_\infty\,$,
 and denote it by $Cl_{A,\mathfrak m}\,$.
 To each  character $\widetilde{\chi}$ of $Cl_{A,\mathfrak m}\,$,
  we associate the $L$-series
\begin{eqnarray*}
  L( \,s \, , \widetilde{\chi} \,,\mathcal{O}_{\! A,{\mathfrak m}} )
   \; = \;
  \sum_{ \tilde{\mathfrak a } \,\subseteq \mathcal{O}_{\! A,{\mathfrak m }}  } \,
   \widetilde{\chi}( \,\tilde{\mathfrak a }\, ) \, {\bf N} {\tilde{\mathfrak a }}^{ - s}\,,
\end{eqnarray*}
 where $\tilde{\mathfrak a }$ goes through  integral invertible
 ideals of $\mathcal{O}_{\! A,{\mathfrak m}}\,$. We also define
  the truncated $L$-series
  \begin{eqnarray*}
  L^*( \,s \, , \widetilde{\chi} \,,\mathcal{O}_{\! A,{\mathfrak m}} )
   \; = \;
  \sum_{ (\tilde{\mathfrak a } ,
   {\mathfrak m }_0 \mathcal{O}_{\!A} )=1 } \,
   \widetilde{\chi}( \,\tilde{\mathfrak a }\, ) \, {\bf N} {\tilde{\mathfrak a }}^{ - s}\,,
  \end{eqnarray*}
  where $\tilde{\mathfrak a }$ ranges over  integral
  ideals of $\mathcal{O}_{\! A,{\mathfrak m}}$
  prime to the conductor of $\mathcal{O}_{\! A,{\mathfrak m}}\,$.

The goal of this section is to derive a simple relation between
  the abelian $L$-series $L_{k,\mathfrak m}( \,s \, , \chi \,)$ of $k$ and
  the $L$-series $L( \,s \, , \widetilde{\chi} \,,\mathcal{O}_{\! A,{\mathfrak m }} \,)$
  of the order $\mathcal{O}_{\! A,{\mathfrak m }}$.

\subsection*{The relations between $L_{k,\mathfrak m}( \,s \, , \chi \,)$  and the Truncated
 $L$-series of $\mathcal{O}_{\! A,{\mathfrak m}}\,$ }

\begin{lemma}
 Let $\tilde{{\mathfrak a}}$ and $\tilde{{\mathfrak b}}$ be
  integral ideals of  $ \,\mathcal{O}_{\! A,{\mathfrak m }}$
  prime to ${\mathfrak m }_0\mathcal{O}_{A} $.
 Suppose there exists an
  $\gamma = (\,\gamma_0, \cdots , \gamma_n\,) \in A^*$
  equivariant modulo ${\mathfrak m }_\infty$ such that
  $\tilde{{\mathfrak b}} = \gamma \,\tilde{{\mathfrak a}}\,$.
 Then
\begin{eqnarray*}
 \qquad \gamma_i \,/ \,\gamma_0 \equiv 1 \,( \,\mbox{mod}^{\,*}{\mathfrak m
  }\,)\,,
  \quad \forall \,\,\, 1 \leq i \leq n\,.
\end{eqnarray*}
\end{lemma}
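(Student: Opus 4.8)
The plan is to separate the two defining conditions hidden in the conclusion $\gamma_i/\gamma_0 \equiv 1 \,(\mathrm{mod}^*\mathfrak m)$. The archimedean half, namely $\sigma(\gamma_i/\gamma_0) > 0$ for every $\sigma \in \mathfrak m_\infty$, is literally the hypothesis that $\gamma$ is equivariant modulo $\mathfrak m_\infty$, so it costs nothing. The whole content of the lemma is therefore the finite half: for each prime $\mathfrak p$ of $\mathcal O_k$ dividing $\mathfrak m_0$, writing $e = v_\mathfrak p(\mathfrak m_0)$, I must establish $v_\mathfrak p(\gamma_i/\gamma_0 - 1) \geq e$ for all $i$.

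First I would localize at such a prime $\mathfrak p$ (inverting $\mathcal O_k \setminus \mathfrak p$), which is legitimate because $\mathcal O_{A,\mathfrak m}$ is an $\mathcal O_k$-module. Setting $R = \mathcal O_{k,\mathfrak p}$, the localization is $(\mathcal O_{A,\mathfrak m})_\mathfrak p = \{(\alpha_0,\ldots,\alpha_n)\in R^{n+1} \mid \alpha_i \equiv \alpha_0 \bmod \mathfrak p^e R,\ 1\le i\le n\}$. The key structural observation — the step I expect to carry the proof — is that since $e\geq 1$ forces all coordinates of a tuple to agree modulo $\mathfrak p$, this localization is a \emph{local} ring: its unique maximal ideal is $\mathfrak M = \{(\alpha_0,\ldots,\alpha_n)\mid \alpha_0 \in \mathfrak p R\}$, and any tuple with $\alpha_0 \in R^*$ is a unit of $(\mathcal O_{A,\mathfrak m})_\mathfrak p$ because the congruences $\alpha_i \equiv \alpha_0 \bmod \mathfrak p^e R$ pass to the inverses $\alpha_i^{-1} \equiv \alpha_0^{-1} \bmod \mathfrak p^e R$.

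Next I would bring in primality to the conductor. The conductor of $\mathcal O_{A,\mathfrak m}$ is $\mathfrak m_0 \mathcal O_A$, whose localization $\mathfrak p^e R^{n+1}$ lies inside $\mathfrak M$. Since $\tilde{\mathfrak a}$ is prime to the conductor we have $\tilde{\mathfrak a} + \mathfrak m_0\mathcal O_A = \mathcal O_{A,\mathfrak m}$; localizing and using $(\mathfrak m_0\mathcal O_A)_\mathfrak p \subseteq \mathfrak M$ shows $\tilde{\mathfrak a}_\mathfrak p$ cannot sit inside the maximal ideal, hence $\tilde{\mathfrak a}_\mathfrak p = (\mathcal O_{A,\mathfrak m})_\mathfrak p$, and likewise $\tilde{\mathfrak b}_\mathfrak p = (\mathcal O_{A,\mathfrak m})_\mathfrak p$ (recall ideals prime to the conductor are invertible). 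The relation $\tilde{\mathfrak b} = \gamma\tilde{\mathfrak a}$ then localizes to $(\mathcal O_{A,\mathfrak m})_\mathfrak p = \gamma\,(\mathcal O_{A,\mathfrak m})_\mathfrak p$, which says exactly that $\gamma$ is a unit of the local ring $(\mathcal O_{A,\mathfrak m})_\mathfrak p$.

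Finally I would read off the congruence. Unit-ness means $\gamma \in (\mathcal O_{A,\mathfrak m})_\mathfrak p$ with $\gamma_0 \notin \mathfrak p R$: the membership gives $\gamma_i \equiv \gamma_0 \bmod \mathfrak p^e R$, while $\gamma_0 \in R^*$ lets me divide, so $\gamma_i/\gamma_0 - 1 = (\gamma_i - \gamma_0)/\gamma_0 \in \mathfrak p^e R$, i.e. $v_\mathfrak p(\gamma_i/\gamma_0 - 1) \geq e$. Running this over all $\mathfrak p \mid \mathfrak m_0$ and combining with the archimedean positivity from equivariance yields $\gamma_i/\gamma_0 \equiv 1 \,(\mathrm{mod}^*\mathfrak m)$. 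The only subtle point is the local-ring assertion and the resulting passage of congruences to inverses; once that is in place, every remaining step is formal.
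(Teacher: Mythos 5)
Your proof is correct, and it reaches the congruence by a genuinely different (local) route than the paper's (global) one. The paper's proof is a short element chase: since $\tilde{\mathfrak a}\,\mathcal{O}_{A}$ and $\gamma\,\tilde{\mathfrak a}\,\mathcal{O}_{A}$ are integral and prime to ${\mathfrak m}_0\mathcal{O}_{A}$, each component $\gamma_i$ is prime to ${\mathfrak m}_0$; then, from $\tilde{\mathfrak a} + {\mathfrak m}_0\mathcal{O}_{A} = \mathcal{O}_{\! A,{\mathfrak m}}$, one picks a witness $\alpha = (\alpha_0,\ldots,\alpha_n) \in \tilde{\mathfrak a}$ with every $\alpha_i$ prime to ${\mathfrak m}_0$, and the two memberships $\alpha \in \mathcal{O}_{\! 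A,{\mathfrak m}}$ and $\gamma\alpha \in \tilde{\mathfrak b} \subseteq \mathcal{O}_{\! A,{\mathfrak m}}$ yield $\alpha_i \equiv \alpha_0$ and $\gamma_i\,\alpha_i \equiv \gamma_0\,\alpha_0 \ (\mbox{mod}\ {\mathfrak m}_0)$, from which the claim follows by division, equivariance supplying the archimedean condition exactly as in your argument. You instead localize at each ${\mathfrak p} \mid {\mathfrak m}_0$, prove that $(\mathcal{O}_{\! A,{\mathfrak m}})_{\mathfrak p}$ is a local ring, deduce from primality to the conductor that $\tilde{\mathfrak a}_{\mathfrak p} = \tilde{\mathfrak b}_{\mathfrak p} = (\mathcal{O}_{\! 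A,{\mathfrak m}})_{\mathfrak p}$, and conclude that $\gamma$ is a unit of that local ring, which directly encodes the desired congruence; all the supporting steps (the local-ring claim, the passage of the defining congruences to componentwise inverses, the local triviality of the two ideals) check out. What the paper's argument buys is brevity, with no commutative-algebra machinery; what yours buys is structure: it isolates the reason the lemma holds (the order is local at primes dividing its conductor, where prime-to-conductor ideals become trivial) and proves slightly more, namely that $\gamma$ is a unit of the localized order, which in particular recovers for free the paper's preliminary observation that each $\gamma_i$ is prime to ${\mathfrak m}_0$. One cosmetic remark: your appeal to ``ideals prime to the conductor are invertible'' is not needed --- for $\tilde{\mathfrak b}$, just as for $\tilde{\mathfrak a}$, local triviality already follows from $\tilde{\mathfrak b} + {\mathfrak m}_0\mathcal{O}_{A} = \mathcal{O}_{\! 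A,{\mathfrak m}}$ together with $({\mathfrak m}_0\mathcal{O}_{A})_{\mathfrak p}$ lying in the maximal ideal.
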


\begin{proof}
 Since $\tilde{{\mathfrak a}}\,\mathcal{O}_{A}$ and
  $\gamma\,\tilde{{\mathfrak a}}\,\mathcal{O}_{A}$ are integral ideals of
  $\mathcal{O}_{A}$ prime to ${\mathfrak m }_0 \mathcal{O}_{A} $,
  each component $\gamma_i$ of $\gamma$ is prime to ${\mathfrak m }_0\,$.
 Moreover, from
  $ \tilde{{\mathfrak a}} + {\mathfrak m }_0 \mathcal{O}_{A} =
    \,\mathcal{O}_{\! A,{\mathfrak m }}\,$,
  we see that there exists a
  $\alpha = (\,\alpha_0, \cdots ,\alpha_n ) \in \tilde{\mathfrak a} \,$
  such that
  $\alpha_i \mathcal{O}_{k} + {\mathfrak m }_0 = \mathcal{O}_{k}\,$, $ 0 \leq i \leq n\,$.
 Since $\alpha$ and $\gamma\,\alpha $ both belong to $\mathcal{O}_{\! A,{\mathfrak m}}\,$,
  we have
  $\, \alpha_i \equiv \alpha_0 \mbox{\;mod\;}{\mathfrak m }_0\,$
  and
  $\, \gamma_i \,\alpha_i \equiv \gamma_0\,\alpha_0 \mbox{\;mod\;} {\mathfrak m}_0\,$,
  from which we deduce that
  $\gamma_i \,/ \gamma_0 \equiv 1 \,( \,\mbox{mod}^{\,*}{\mathfrak m}\,)\,,
    \,\forall \,\, 1 \leq i \leq n\,$.
\end{proof}

 We  define a map
  $\Psi : Cl_{A,\mathfrak m} \rightarrow Cl_k (\mathcal{O}_k) \times Cl_k (\mathfrak m)^{n}$
  as follows.
 Let $\widetilde{\mathfrak{A}}$ be an ideal class of $ Cl_{A,\mathfrak m}$.
 Choose in $\widetilde{\mathfrak{A}}$ an integral
   $\,\mathcal{O}_{\! A,{\mathfrak m }}$-ideal $\tilde{\mathfrak a}$
   prime to ${\mathfrak m }_0 \mathcal{O}_{A} $.
 Write
   $\tilde{\mathfrak a} \,\mathcal{O}_{A} =
     (\,{\mathfrak a}_0 , \cdots ,  {\mathfrak a}_n )\, $,
  where  the $ {\mathfrak a}_i $'s are integral ideals of $\mathcal{O}_k$
  prime to ${\mathfrak m}_0\,$.
 Let $ \mathfrak{A}_0 $ denote the ideal class of ${\mathfrak a}_0 $
    in $ Cl_k (\mathcal{O}_k)\,$,
  and  $ \mathfrak{A}_i $ ($ 1 \leq i \leq n $) the ray class
  of $ \mathfrak{a}_i / \mathfrak{a}_0 $ in $Cl_k (\mathfrak m)\,$.
 Then
\begin{eqnarray*}
 \quad
  \Psi (\,\widetilde{\mathfrak{A}}\,) = (\, \mathfrak{A}_0 \,,
  \mathfrak{A}_1 \,, \cdots, \mathfrak{A}_n ) \in   Cl_k
  (\mathcal{O}_k) \times Cl_k (\mathfrak m)^{n} \,.
\end{eqnarray*}
 By Lemma  A$.1$, the map $\Psi$ is well defined.
 It is also clear that $\Psi$ is a surjective homomorphism.
 Now suppose  $\Psi (\,\widetilde{\mathfrak{A}}\,) = 1 \,$, that is,
  $\widetilde{\mathfrak{A}}$ is represented
  by some integral
  $ \,\mathcal{O}_{\! A,{\mathfrak m }}$-ideal $\tilde{\mathfrak a}$ with
  $\tilde{\mathfrak a} \,\mathcal{O}_{A}
   = (\, \gamma_0 \mathcal{O}_k \,, \cdots , \gamma_n \mathcal{O}_k ) \,$,
  where the $ \gamma_i $'s  are elements of
  $\mathcal{O}_k$ prime to $ {\mathfrak m}_0 $ such that
  $ \gamma_i \,/ \gamma_0 \equiv 1 \,( \,\mbox{mod}^{\,*}{\mathfrak m})\,$,
     $0 \leq i \leq n\,$.
  Then $\gamma = ( \,\gamma_0\,, \cdots ,
       \gamma_n ) $
   is  in  $ \mathcal{O}_{\! A,{\mathfrak m}} $ and
   equivariant modulo ${\mathfrak m}_\infty\,$.
 Since $\tilde{\mathfrak a}$ and $\gamma \,\mathcal{O}_{\! A,{\mathfrak m}}$
  are both integral ideals of $\mathcal{O}_{\! A,{\mathfrak m}}$ prime
  to ${\mathfrak m }_0 \mathcal{O}_{A} $ and  under
   $\tilde{\mathfrak a} \,\mathcal{O}_{A}$, Lemma $2.1 $ implies that
   $\tilde{\mathfrak a} = \gamma \,\mathcal{O}_{\! A,{\mathfrak m}}\,$.
  This shows that $\Psi$ is  injective.
\begin{lemma}
 The mapping $\Psi :  Cl_{A,\mathfrak m} \rightarrow Cl_k
(\mathcal{O}_k) \times Cl_k (\mathfrak m)^{n}$ is an isomorphism.
\end{lemma}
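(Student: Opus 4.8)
The plan is to check that $\Psi$ is a well-defined group homomorphism and then to prove it is a bijection by computing its kernel and image directly, exploiting the product structure $\mathcal{O}_A = \mathcal{O}_k^{\,n+1}$ together with Dedekind's correspondence. The guiding observation is that, by Lemma $2.1$ and Corollary $2.2$, an integral invertible $\mathcal{O}_{\!A,\mathfrak m}$-ideal prime to the conductor $\mathfrak m_0\mathcal{O}_A$ is nothing but the contraction of an $\mathcal{O}_A$-ideal $(\mathfrak a_0,\dots,\mathfrak a_n)$ whose components are integral $\mathcal{O}_k$-ideals prime to $\mathfrak m_0$. Thus ideals of the order are governed componentwise by ideals of $\mathcal{O}_k$, and the only delicate issue is how the defining congruence of $\mathcal{O}_{\!A,\mathfrak m}$ and the equivariance condition at $\mathfrak m_\infty$ cut down the principal ideals.

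First I would establish that $\Psi$ is well defined and multiplicative. Well-definedness amounts to showing the tuple $(\mathfrak A_0,\dots,\mathfrak A_n)$ is independent of the chosen representative $\tilde{\mathfrak a}$ prime to $\mathfrak m_0\mathcal{O}_A$: two such representatives differ by a factor $\gamma\in A^*$ equivariant modulo $\mathfrak m_\infty$, and Lemma A$.1$ forces $\gamma_i/\gamma_0\equiv 1\,(\mbox{mod}^{\,*}\mathfrak m)$ for all $i$, so $\mathfrak A_0$ is unchanged in $Cl_k(\mathcal{O}_k)$ while each $\mathfrak a_i/\mathfrak a_0$ is altered only within $P_{k,1}(\mathfrak m)$, leaving the ray classes $\mathfrak A_i$ fixed. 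Multiplicativity then follows at once from the componentwise identity $(\mathfrak a_i\mathfrak b_i)/(\mathfrak a_0\mathfrak b_0)=(\mathfrak a_i/\mathfrak a_0)(\mathfrak b_i/\mathfrak b_0)$.

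For surjectivity I would, given any target $(\mathfrak A_0,\mathfrak A_1,\dots,\mathfrak A_n)$, choose $\mathfrak a_0$ representing $\mathfrak A_0$ and ideals representing each $\mathfrak A_i$ in the form $\mathfrak a_i/\mathfrak a_0$, all prime to $\mathfrak m_0$, form the $\mathcal{O}_A$-ideal $(\mathfrak a_0,\dots,\mathfrak a_n)$, and contract it to $\mathcal{O}_{\!A,\mathfrak m}$ via Corollary $2.2$; the resulting class maps to the prescribed tuple. For injectivity I would run the kernel computation: if $\Psi(\widetilde{\mathfrak A})=1$, a representative may be taken with $\tilde{\mathfrak a}\,\mathcal{O}_A=(\gamma_0\mathcal{O}_k,\dots,\gamma_n\mathcal{O}_k)$ where the $\gamma_i$ are prime to $\mathfrak m_0$ and $\gamma_i/\gamma_0\equiv 1\,(\mbox{mod}^{\,*}\mathfrak m)$; then $\gamma=(\gamma_0,\dots,\gamma_n)$ lies in $\mathcal{O}_{\!A,\mathfrak m}$ and is equivariant modulo $\mathfrak m_\infty$, and since $\tilde{\mathfrak a}$ and $\gamma\,\mathcal{O}_{\!A,\mathfrak m}$ are integral ideals prime to $\mathfrak m_0\mathcal{O}_A$ lying under the same $\mathcal{O}_A$-ideal, Lemma $2.1$ yields $\tilde{\mathfrak a}=\gamma\,\mathcal{O}_{\!A,\mathfrak m}$, so $\widetilde{\mathfrak A}$ is trivial.

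The main obstacle I anticipate is precisely the bookkeeping of the two conditions defining the principal subgroup $P_{\mathfrak m_\infty}(\mathcal{O}_{\!A,\mathfrak m})$: the multiplicative congruence at $\mathfrak m_0$ built into the order $\mathcal{O}_{\!A,\mathfrak m}$ and the sign condition at $\mathfrak m_\infty$. These must combine to give exactly the ray-class relation $\gamma_i/\gamma_0\equiv 1\,(\mbox{mod}^{\,*}\mathfrak m)$, and verifying that both the contraction in the surjectivity step and the factor $\gamma$ in the injectivity step genuinely respect the two conditions simultaneously is the delicate point. This is exactly what Lemma A$.1$ is designed to handle, so once it is invoked the remaining verifications are routine.
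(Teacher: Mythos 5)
Your proposal is correct and follows essentially the same route as the paper: well-definedness via Lemma A$.1$, surjectivity and multiplicativity by the evident componentwise construction, and injectivity by the kernel computation in which the generator $\gamma=(\gamma_0,\dots,\gamma_n)$ is assembled from a principal representative and Lemma $2.1$ (Dedekind) forces $\tilde{\mathfrak a}=\gamma\,\mathcal{O}_{\!A,\mathfrak m}$. The only difference is that you spell out the surjectivity and homomorphism checks that the paper dismisses as clear.
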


 Next let $\widetilde{\mathfrak{A}} \in  Cl_{A,\mathfrak m}$ and
 suppose
  $\Psi (\widetilde{\mathfrak{A}}) = (\, \mathfrak{A}_0 ,  \mathfrak{A}_1 ,
   \cdots, \mathfrak{A}_n ) $,
  where $\mathfrak{A}_0 \in   Cl_k (\mathcal{O}_k) $ and
  $\mathfrak{A}_i \in  Cl_k (\mathfrak m) \,$, $1 \leq i \leq n \,$.
 As before, we associate to $\widetilde{\mathfrak{A}}$
   the truncated partial zeta function
\begin{eqnarray*}
  \zeta^{\,*}( \,s \, , \widetilde{\mathfrak{A}} \,,
  \mathcal{O}_{\! A,{\mathfrak m}}\, )
  = \sum_{
   \substack{  \tilde{\mathfrak a }\,\in \,\widetilde{\mathfrak{A}}  \\
   ( \tilde{\mathfrak a } , {\mathfrak m }_0 \mathcal{O}_{\!A} )
     =  1 } } \,
  { {\, {\bf N} \tilde{\mathfrak a }^{ - s} } }\, ,
\end{eqnarray*}
  where $\tilde{\mathfrak a }$ ranges over those  integral
  $\mathcal{O}_{\! A,{\mathfrak m}}$-ideals
  prime to ${\mathfrak m }_0 \mathcal{O}_{\!A} $ lying in the
  class $\widetilde{\mathfrak{A}}\,$.
 Moreover, for each ray class $ \mathfrak A \in Cl_k (\mathfrak m)\,$,  put
 \begin{eqnarray*}
 \zeta_{k,\mathfrak m}( \,s \, , \mathfrak A \,)
  \; = \; \sum_{ \substack{  {\mathfrak a }\,\in \,\mathfrak{A}  \\
   ( {\mathfrak a } , {\mathfrak m }_0 )=  1 } }
    \, \,{\bf N} {\mathfrak a }^{ -s}  \,.
\end{eqnarray*}

\begin{lemma}
 Let $\widetilde{\mathfrak{A}}$ and the $\mathfrak{A}_i$'s  be as above.
 Suppose that the  class
  $\mathfrak{A}_0 \in   Cl_k (\mathcal{O}_k) $ breaks up into finitely many
  smaller ray classes modulo $\mathfrak m$, say,
  $\mathfrak{B}_1 , \cdots, \mathfrak{B}_h \,$, where
  $ h = |Cl_k (\mathfrak m)|\,/\,|Cl_k (\mathcal{O}_k)|\,$.
Then we have
\begin{eqnarray*}
 \qquad \zeta^{\,*}( \,s \, , \widetilde{\mathfrak{A}} \,,
  \mathcal{O}_{\! A,{\mathfrak m}})
  \; = \; \sum_{j\,=\,1}^{h} \,\zeta_{\,k,\mathfrak m} ( \,s \,,\mathfrak{B}_j )\,
    \zeta_{\,k,\mathfrak m} ( \,s \,,  \mathfrak{B}_j \,\mathfrak{A}_1 )\,
    \cdots \,\zeta_{\,k,\mathfrak m} ( \,s \,, \mathfrak{B}_j \,\mathfrak{A}_n )\,.
 \end{eqnarray*}

\end{lemma}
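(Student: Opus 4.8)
The plan is to reduce the sum defining $\zeta^{\,*}( \,s \, , \widetilde{\mathfrak{A}} \,, \mathcal{O}_{\! A,{\mathfrak m}})$ to a multiple Dirichlet series over integral ideals of $\mathcal{O}_k$, and then to decouple it into a sum of products by partitioning according to the ray class of the zeroth component. First I would invoke Corollary $2.2$ applied to the order $\mathcal{O}_{\! A,{\mathfrak m}}$ of $A$, whose conductor is $\,{\mathfrak m}_0 \mathcal{O}_A\,$. This yields a norm-preserving bijection $\tilde{\mathfrak a} \mapsto \tilde{\mathfrak a}\,\mathcal{O}_A$ between the integral invertible $\mathcal{O}_{\! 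A,{\mathfrak m}}$-ideals prime to $\,{\mathfrak m}_0 \mathcal{O}_A\,$ and the integral $\mathcal{O}_A$-ideals prime to $\,{\mathfrak m}_0 \mathcal{O}_A\,$. Since $\mathcal{O}_A = \mathcal{O}_k^{\,n+1}$, the latter are precisely the tuples $(\,{\mathfrak a}_0, \cdots, {\mathfrak a}_n)$ of integral $\mathcal{O}_k$-ideals, each prime to ${\mathfrak m}_0$, and the norm is multiplicative across components: ${\bf N}\tilde{\mathfrak a} = \prod_{i=0}^n {\bf N}{\mathfrak a}_i$.

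Next I would translate the condition $\tilde{\mathfrak a} \in \widetilde{\mathfrak{A}}$ into conditions on the components using the explicit description of $\Psi$. Given $\Psi(\,\widetilde{\mathfrak{A}}\,) = (\, \mathfrak{A}_0, \mathfrak{A}_1, \cdots, \mathfrak{A}_n )$, membership is equivalent to requiring that ${\mathfrak a}_0$ lie in the ideal class $\mathfrak{A}_0$ of $Cl_k(\mathcal{O}_k)$ and that, for each $1 \leq i \leq n$, the ray class of ${\mathfrak a}_i / {\mathfrak a}_0$ modulo ${\mathfrak m}$ equal $\mathfrak{A}_i$; since the ray class is a homomorphism, this says the ray class of ${\mathfrak a}_i$ is $\mathfrak{A}_i$ times that of ${\mathfrak a}_0$. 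Substituting into the definition of $\zeta^{\,*}$ and using the multiplicativity of the norm then expresses the truncated partial zeta function as a single multiple sum over all such tuples.

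Finally I would partition the outer sum over ${\mathfrak a}_0$ according to its ray class modulo ${\mathfrak m}$. Because the natural projection $Cl_k({\mathfrak m}) \rightarrow Cl_k(\mathcal{O}_k)$ is surjective with fibers of cardinality $h = |Cl_k({\mathfrak m})|\,/\,|Cl_k(\mathcal{O}_k)|$, the ideal class $\mathfrak{A}_0$ splits exactly into the ray classes $\mathfrak{B}_1, \cdots, \mathfrak{B}_h$, so an integral ideal prime to ${\mathfrak m}_0$ lies in $\mathfrak{A}_0$ if and only if its ray class is one of the $\mathfrak{B}_j$. For ${\mathfrak a}_0$ of fixed ray class $\mathfrak{B}_j$, the constraints on the remaining components become the independent conditions that ${\mathfrak a}_i$ lie in the ray class $\mathfrak{B}_j\,\mathfrak{A}_i$; hence the inner sum over $(\,{\mathfrak a}_1, \cdots, {\mathfrak a}_n)$ factors as $\prod_{i=1}^n \zeta_{\,k,\mathfrak m}(\,s\,, \mathfrak{B}_j\,\mathfrak{A}_i)$, while the sum over ${\mathfrak a}_0$ in ray class $\mathfrak{B}_j$ equals $\zeta_{\,k,\mathfrak m}(\,s\,,\mathfrak{B}_j)$. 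Summing over $j$ yields the asserted identity.

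There is no deep obstacle here; the substance lies in the bookkeeping. The one point deserving care, which is also the key mechanism, is that fixing the ray class $\mathfrak{B}_j$ of ${\mathfrak a}_0$ simultaneously pins down and decouples the ray classes $\mathfrak{B}_j\,\mathfrak{A}_i$ of all the other components, thereby converting the multiple Dirichlet series into a sum of products. I would also take care to justify the two supporting facts used above: that the norm is multiplicative across the components of an $\mathcal{O}_A$-ideal, and that $\mathfrak{A}_0$ decomposes into precisely the $h$ ray classes $\mathfrak{B}_j$ under the projection to $Cl_k(\mathcal{O}_k)$.
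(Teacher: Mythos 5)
Your proposal is correct and follows essentially the same route as the paper's own proof: both pass through the Dedekind bijection (Lemma 2.1/Corollary 2.2) to identify prime-to-conductor $\mathcal{O}_{\! A,{\mathfrak m}}$-ideals in $\widetilde{\mathfrak{A}}$ with tuples $(\,{\mathfrak a}_0,\cdots,{\mathfrak a}_n)$ of integral $\mathcal{O}_k$-ideals prime to ${\mathfrak m}_0$ satisfying the ray-class conditions read off from $\Psi$, then partition by the ray class $\mathfrak{B}_j$ of ${\mathfrak a}_0$ and factor the resulting multiple sum. The only cosmetic difference is that the paper verifies the forward and converse correspondences by hand (with uniqueness in the converse direction), whereas you package the same content as a norm-preserving bijection plus the injectivity of $\Psi$ from Lemma A.2.
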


\begin{proof}
 Let  $\tilde{\mathfrak{a}} \in \widetilde{\mathfrak{A}} $
  be an integral $\mathcal{O}_{\! A,{\mathfrak m}}$-ideal prime to
  ${\mathfrak m }_0 \mathcal{O}_{\!A}$.
 Put
  $\tilde{\mathfrak a} \,\mathcal{O}_{A} = ({\mathfrak a}_0 , \cdots ,  {\mathfrak a}_n ) $,
  where  the $ {\mathfrak a}_i $'s are integral ideals of $\mathcal{O}_k$
  prime to ${\mathfrak m}_0\,$.
 Then $ {\bf N} \tilde{\mathfrak{a}}
   = {\bf N}{\mathfrak a}_0  \cdots   {\bf N}{\mathfrak a}_n \,$.
 Moreover,  we have ${\mathfrak a}_0  \in \mathfrak{B}_j $
  for a unique $j\,$, $ 1 \leq j \leq h\,$, and
  ${\mathfrak a}_i \in \mathfrak{B}_j \,\mathfrak{A}_i \,$ for $1 \leq i\leq n \,$.
 Conversely, given $1 \leq j \leq h \,$, and given  integral
  $\mathcal{O}_k$-ideals ${\mathfrak a}_0 , \cdots ,  {\mathfrak a}_n  $
  prime to ${\mathfrak m}_0\,$ such that ${\mathfrak a}_0 \in \mathfrak{B}_j\,$,
  ${\mathfrak a}_i \in \mathfrak{B}_j \,\mathfrak{A}_i \,$ for $1 \leq i \leq n \,$,
  there exists a unique integral $\mathcal{O}_{\! A,{\mathfrak m}}$-ideal
  $\tilde{\mathfrak{a}} \in \widetilde{\mathfrak{A}} $ prime to
  ${\mathfrak m }_0 \mathcal{O}_{\!A}$
  under the $\mathcal{O}_{\!A}$-ideal
  $(\,{\mathfrak a}_0 , \cdots ,  {\mathfrak a}_n ) \,$.
  Hence  we have
\begin{eqnarray*}
  \zeta^{\,*}(\, s \, , \widetilde{\mathfrak{A}} \,,
  \mathcal{O}_{\! A,{\mathfrak m}})
   \; = \;\sum_{j\,=1}^{h}\,
    \sum_{
   \substack{  {\mathfrak a }_0 \in \mathfrak{B}_j   \\
   ( {\mathfrak a }_0 , {\mathfrak m }_0 ) =  1 } }
  { {\bf N} {\mathfrak a }_0^{ - s} }
  \sum_{
   \substack{  {\mathfrak a }_1 \in \mathfrak{B}_j \mathfrak{A}_1  \\
   ( {\mathfrak a }_1 , {\mathfrak m }_0 ) =  1 } }
  { {\bf N} {\mathfrak a }_1^{ - s} } \, \cdots
  \sum_{
   \substack{  {\mathfrak a }_n \in \mathfrak{B}_j \mathfrak{A}_n  \\
   ( {\mathfrak a }_n , {\mathfrak m }_0 ) =  1 } }
  { {\bf N} {\mathfrak a }_n^{ - s} } \, .
 \end{eqnarray*}

\end{proof}

 Let $\widetilde{\chi}$ be a character of $Cl_{A,\mathfrak m}$.
 Through the isomorphism $\Psi$, we may identify $\widetilde{\chi}$
  as a character of  $Cl_k (\mathcal{O}_k) \times Cl_k (\mathfrak m)^{n}$.
 Thus
 $\widetilde{\chi} = \chi_0^{\, } \times \chi_1^{\, } \times \cdots \times \chi_n \,$,
  where $\chi_0^{\, }$ is a character of $Cl_k (\mathcal{O}_k)\,$
  and $\chi_i$ ($ 1 \leq i \leq n $) are characters of $Cl_k (\mathfrak m)$.
\begin{theorem}
 With notation as above, we have
 \begin{eqnarray}
 \qquad \quad  L^{*}( \,s \, , \widetilde{\chi} \,,
  \mathcal{O}_{\! A,{\mathfrak m}} )
   =
    L_{k,\mathfrak m}( s \, , \chi_1 ) \,\cdots \,
       L_{k,\mathfrak m}( s \, , \chi_n )\,\, L_{k,\mathfrak m}
  ( \,s \, , \chi_0^{\, } \, \chi_1^{-1} \cdots \chi_n^{-1} \,)\,,
 \end{eqnarray}
where in the last $L$-function we identify $\chi_0^{\, }$ with its
induced character at
 $Cl_k (\mathfrak m)$ through the surjective homomorphism
  $ Cl_k (\mathfrak m) \rightarrow Cl_k (\mathcal{O}_k) \, $.
\end{theorem}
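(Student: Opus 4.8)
The plan is to expand the definition $L^{*}(s,\widetilde\chi,\mathcal O_{\!A,\mathfrak m})=\sum_{\widetilde{\mathfrak A}}\widetilde\chi(\widetilde{\mathfrak A})\,\zeta^{\,*}(s,\widetilde{\mathfrak A},\mathcal O_{\!A,\mathfrak m})$, insert the factorization of the truncated partial zeta function furnished by Lemma A.3, and then collapse the resulting multiple sum back into a product of abelian $L$-series of $k$ by a change of variables in each coordinate. All manipulations are rearrangements of Dirichlet series that converge absolutely for $\Re(s)$ large, so they are legitimate. First I would use the isomorphism $\Psi$ of Lemma A.2 to replace the summation variable $\widetilde{\mathfrak A}$ by the tuple $(\mathfrak A_0,\mathfrak A_1,\dots,\mathfrak A_n)\in Cl_k(\mathcal O_k)\times Cl_k(\mathfrak m)^n$, so that $\widetilde\chi(\widetilde{\mathfrak A})=\chi_0(\mathfrak A_0)\prod_{i=1}^n\chi_i(\mathfrak A_i)$. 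Substituting Lemma A.3 turns $L^{*}$ into a sum over this tuple and over the $h=|Cl_k(\mathfrak m)|/|Cl_k(\mathcal O_k)|$ ray classes $\mathfrak B_1,\dots,\mathfrak B_h$ lying above $\mathfrak A_0$, of $\chi_0(\mathfrak A_0)\prod_i\chi_i(\mathfrak A_i)$ times $\zeta_{k,\mathfrak m}(s,\mathfrak B_j)\prod_i\zeta_{k,\mathfrak m}(s,\mathfrak B_j\mathfrak A_i)$.

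The key simplification is to fuse the outer sum over $\mathfrak A_0\in Cl_k(\mathcal O_k)$ with the inner sum over $j$ into a single sum over $\mathfrak B\in Cl_k(\mathfrak m)$. Each ray class $\mathfrak B$ lies above exactly one class $\mathfrak A_0$, namely its image under the surjection $Cl_k(\mathfrak m)\rightarrow Cl_k(\mathcal O_k)$, so as $\mathfrak A_0$ ranges over $Cl_k(\mathcal O_k)$ and $\mathfrak B_j$ over the fiber above it, $\mathfrak B_j$ ranges exactly once over $Cl_k(\mathfrak m)$. Under the convention that $\chi_0$ is identified with its pullback to $Cl_k(\mathfrak m)$, the factor $\chi_0(\mathfrak A_0)$ becomes $\chi_0(\mathfrak B)$ on the nose, and the expression reads $\sum_{\mathfrak A_1,\dots,\mathfrak A_n}\prod_i\chi_i(\mathfrak A_i)\sum_{\mathfrak B}\chi_0(\mathfrak B)\,\zeta_{k,\mathfrak m}(s,\mathfrak B)\prod_i\zeta_{k,\mathfrak m}(s,\mathfrak B\mathfrak A_i)$.

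To finish, for each fixed $\mathfrak B$ the inner sum over $(\mathfrak A_1,\dots,\mathfrak A_n)$ factors coordinatewise, and in the $i$-th factor I would substitute $\mathfrak C_i=\mathfrak B\mathfrak A_i$; as $\mathfrak A_i$ runs over $Cl_k(\mathfrak m)$ so does $\mathfrak C_i$, with $\chi_i(\mathfrak A_i)=\chi_i^{-1}(\mathfrak B)\,\chi_i(\mathfrak C_i)$. Each such factor then equals $\chi_i^{-1}(\mathfrak B)\,L_{k,\mathfrak m}(s,\chi_i)$, so the $L_{k,\mathfrak m}(s,\chi_i)$ pull out in front and what remains is $\prod_i L_{k,\mathfrak m}(s,\chi_i)$ times $\sum_{\mathfrak B}(\chi_0\chi_1^{-1}\cdots\chi_n^{-1})(\mathfrak B)\,\zeta_{k,\mathfrak m}(s,\mathfrak B)=L_{k,\mathfrak m}(s,\chi_0\chi_1^{-1}\cdots\chi_n^{-1})$, which is exactly $(\mathrm{A}.2)$. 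The only step deserving real care, and the main obstacle, is the bookkeeping of this reindexing: one must check that the combined $(\mathfrak A_0,j)$-summation is a bijective reparametrization onto $Cl_k(\mathfrak m)$ and that the pullback convention for $\chi_0$ is precisely what makes $\chi_0(\mathfrak A_0)=\chi_0(\mathfrak B)$. Once that is granted, the decoupling change of variables $\mathfrak A_i\mapsto\mathfrak B\mathfrak A_i$ in the freely and independently varying coordinates is routine.
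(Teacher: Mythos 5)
Your proposal is correct and follows essentially the same route as the paper: expand $L^{*}(s,\widetilde{\chi},\mathcal{O}_{\!A,\mathfrak m})$ via the isomorphism $\Psi$ and Lemma A.3, merge the sum over $\mathfrak{A}_0\in Cl_k(\mathcal{O}_k)$ with the sum over the fiber classes $\mathfrak{B}_j$ into a single sum over $Cl_k(\mathfrak m)$ using the pullback convention for $\chi_0$, and then decouple the coordinates by the substitution $\mathfrak{A}_i\mapsto\mathfrak{B}\,\mathfrak{A}_i$ to extract the factors $L_{k,\mathfrak m}(s,\chi_i)$ and leave $L_{k,\mathfrak m}(s,\chi_0^{\,}\chi_1^{-1}\cdots\chi_n^{-1})$. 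This is precisely the manipulation carried out in the paper's proof of Theorem A.4.
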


\begin{proof}
Lemma A$.3$ implies that
\begin{eqnarray*}
  \; L^{*}( \,s \, , \widetilde{\chi} \,,
  \mathcal{O}_{\! A,{\mathfrak m}} )
   & = & \!\!\!\! \sum_{\mathfrak{A}_0 \in   Cl_k (\mathcal{O}_k) }
    \sum_{\mathfrak{A}_1 \in   Cl_k (\mathfrak m) } \cdots
   \!\! \sum_{\mathfrak{A}_n \in   Cl_k (\mathfrak m) }
     \chi_0(\,\mathfrak{A}_0) \,
     \chi_1(\,\mathfrak{A}_1) \cdots
     \chi_n(\,\mathfrak{A}_n) \\
   &  & \quad \sum_{  \substack{ \mathfrak{B} \in  Cl_k (\mathfrak m) \\
                        \mathfrak{B} \subseteq \mathfrak{A}_0 } }
    \zeta_{\,k,\mathfrak m} ( \,s \,,\mathfrak{B} )\,
    \zeta_{\,k,\mathfrak m} ( \,s \,,  \mathfrak{B} \,\mathfrak{A}_1 )
    \cdots \zeta_{\,k,\mathfrak m} ( \,s \,,
    \mathfrak{B}\,\mathfrak{A}_n
    )\,.
\end{eqnarray*}
Regarding $\chi_0$ as a character defined at $Cl_k (\mathfrak m)\,$,
we may write the above  as
\begin{eqnarray*}
 \quad \sum_{   \mathfrak{B} \in  Cl_k (\mathfrak m) }
    \chi_0 ^{\,}\,
    \chi_1^{-1} \cdots
    \chi_n^{-1}(\,\mathfrak{B}\,)
    \,\zeta_{\,k,\mathfrak m} ( \,s \,,\mathfrak{B} \,)
    \sum_{\mathfrak{A}_1 \in   Cl_k (\mathfrak m) }
    \chi_1(\,\mathfrak{B}\,\mathfrak{A}_1) \,
    \zeta_{\,k,\mathfrak m} ( \,s \,,  \mathfrak{B} \,\mathfrak{A}_1 )
    \\
    \,\cdots
    \sum_{\mathfrak{A}_n \in   Cl_k (\mathfrak m) }
    \chi_n(\,\mathfrak{B}\,\mathfrak{A}_n) \,
    \zeta_{\,k,\mathfrak m} ( \,s \,,  \mathfrak{B} \,\mathfrak{A}_n )
    \,.
 \end{eqnarray*}
This is the right hand side of $($A$.1)$.
\end{proof}

\begin{corollary}
 Let $\chi$ be a character of $Cl_k (\mathfrak m)$ and
  let $\widetilde{\chi}$ be the character
  of $Cl_{A,\mathfrak m}$ corresponding to
  $ 1 \times \chi \times \cdots \times \chi $ under $\Psi$. Then
  we have
\begin{eqnarray*}
L_{k,\mathfrak m}
  ( \,s \, , \chi\,)^n
    \, L_{k,\mathfrak m}( \,s \, , \chi^{- n} ) =
    L^{*}( \,s \, , \widetilde{\chi} \,,
  \mathcal{O}_{\! A,{\mathfrak m}} )\,.
 \end{eqnarray*}
 In particular, if we choose $n$ to be the order of  $\,\chi\,$, then
 we get
\begin{eqnarray*}
L_{k,\mathfrak m}
  ( \,s \, , \chi\,)^n
    \, L_{k,\mathfrak m}( \,s \, , 1 ) =
    L^{*}( \,s \, , \widetilde{\chi} \,,
  \mathcal{O}_{\! A,{\mathfrak m}} )\,.
 \end{eqnarray*}
 \end{corollary}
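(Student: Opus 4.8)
The plan is to obtain the corollary as a direct specialization of Theorem A.4. Under the isomorphism $\Psi$ of Lemma A.2, a character $\widetilde{\chi}$ of $Cl_{A,\mathfrak m}$ is identified with a tuple $\chi_0 \times \chi_1 \times \cdots \times \chi_n$, where $\chi_0$ is a character of $Cl_k(\mathcal{O}_k)$ and each $\chi_i$ ($1 \le i \le n$) is a character of $Cl_k(\mathfrak m)$. By hypothesis the character $\widetilde{\chi}$ here corresponds to $1 \times \chi \times \cdots \times \chi$, so in the notation of Theorem A.4 we have $\chi_0 = 1$ and $\chi_1 = \cdots = \chi_n = \chi$.

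First I would substitute these component characters into formula $(\mathrm{A}.1)$. The first $n$ factors $L_{k,\mathfrak m}(s,\chi_1)\cdots L_{k,\mathfrak m}(s,\chi_n)$ collapse to $L_{k,\mathfrak m}(s,\chi)^{n}$. The remaining factor is governed by the character $\chi_0\,\chi_1^{-1}\cdots\chi_n^{-1}$; with $\chi_0 = 1$ (viewed as the trivial character of $Cl_k(\mathfrak m)$ through the surjection $Cl_k(\mathfrak m)\to Cl_k(\mathcal{O}_k)$, as prescribed in Theorem A.4) and each $\chi_i = \chi$, this product equals $\chi^{-n}$. Hence $(\mathrm{A}.1)$ reads
\[
 L^{*}(\,s\,,\widetilde{\chi}\,,\mathcal{O}_{\! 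A,{\mathfrak m}})
  = L_{k,\mathfrak m}(s\,,\chi)^{n}\, L_{k,\mathfrak m}(s\,,\chi^{-n})\,,
\]
which is the first asserted identity.

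For the \emph{in particular} clause I would take $n$ to be the order of $\chi$. Then $\chi^{n}$ is trivial, so $\chi^{-n} = 1$ and $L_{k,\mathfrak m}(s\,,\chi^{-n}) = L_{k,\mathfrak m}(s\,,1)$, yielding the second identity.

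I do not expect any substantive obstacle, since the corollary is a pure specialization of Theorem A.4. The only point demanding care is the bookkeeping of the character identification: one must confirm that the trivial $\chi_0 = 1$ really contributes as the trivial character in the final $L$-factor, and that the indexing of the $\chi_i$'s under $\Psi$ matches the prescribed pattern $1 \times \chi \times \cdots \times \chi$. Both are immediate from the definition of $\Psi$ in Lemma A.2 together with the decomposition $\widetilde{\chi} = \chi_0 \times \chi_1 \times \cdots \times \chi_n$ recorded just before Theorem A.4.
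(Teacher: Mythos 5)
Your proposal is correct and is exactly the paper's route: Corollary A.5 is stated in the paper immediately after Theorem A.4 with no separate proof precisely because it is the specialization $\chi_0 = 1$, $\chi_1 = \cdots = \chi_n = \chi$ of formula (A.1), which is what you carry out. Your handling of the two points needing care (the trivial $\chi_0$ viewed through the surjection $Cl_k(\mathfrak m) \to Cl_k(\mathcal{O}_k)$, and $\chi^{-n} = 1$ when $n$ is the order of $\chi$) is also accurate.
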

 \noindent Theorem A$.4$  allows us  to compute
  $ L^{*}( \,s \, , \widetilde{\chi} \,, \mathcal{O}_{\! A,{\mathfrak m}})$
 from the $L_{k,\mathfrak m}( \,s \, , \chi )\,$'s and vice versa.
 In particular, let
  $A ={\mathbb Q}^2$, $m \in {\mathbb Z}^{+}$ and
  $\mathcal{O}_{\! A,{m}} = {\mathbb Z} {\,\bf 1} + m \,\mathcal{O}_{\! A}\,$.
 Then
  each  character $\widetilde{\chi}$ of $Cl_{A,m}$
  corresponds to $  1 \times \chi \,$
  for a unique character  $\chi$ of $Cl_{\mathbb Q} ( m)$ so that
  $\widetilde{\chi}$ is primitive if and only if  $\chi$ is primitive.
 Moreover, we have
\begin{eqnarray*}
 L^{*}( \,s \, , \widetilde{\chi}\,, \mathcal{O}_{\! A,{m}} )
  =  L_{{\mathbb Q}, m}( \,s \, , \chi\,)\,
   L_{{\mathbb Q}, m}( \,s \, , \chi^{- 1})\,.
 \end{eqnarray*}
 This result is used in Section $4$.

\subsection*{The relations between the $L$-series and the Truncated
 $L$-series of $\mathcal{O}_{\! A,{\mathfrak m}}\,$ }
 For the rest of the section we let
   $\,{\mathfrak m }' =  {\mathfrak m }_0 '\,{\mathfrak m }_\infty$
 denote a modulus in $k$ whose finite part
  ${\mathfrak m }_0 '$ is an integral ideal
  of $\mathcal{O}_k$ dividing ${\mathfrak m }_0\,$.
  We define
   $\mathcal{O}_{\! A,{\mathfrak m }'} $
   and  its narrow class group  $ Cl_{A,{\mathfrak m }'}$
   modulo ${\mathfrak m}_\infty$ as before.
 For
  such a modulus ${\mathfrak m }'$, we let
   $\varphi_{A,{\mathfrak m }'\!,{\mathfrak m }} :
     Cl_{A,{\mathfrak m }} \rightarrow  Cl_{A,{\mathfrak m }'}$
 denote the  homomorphism sending
 the narrow ideal class of $  \tilde{\mathfrak a}  $ modulo  ${\mathfrak m}_\infty$
 to the narrow ideal class of
    $ \tilde{\mathfrak a} \,\mathcal{O}_{\! A,{\mathfrak m }'}$
    modulo  ${\mathfrak m}_\infty\,$,\,
 for every $\tilde{\mathfrak a}\in I(\,\mathcal{O}_{\! A,{\mathfrak m }})\,$.
  Let $ U_{\! A,{\mathfrak m }'}$ denote  the group
  formed by units of $\mathcal{O}_{\! A,{\mathfrak m }'}$
  which are equivariant modulo ${\mathfrak m}_\infty\,$.
 It is easy to see that  $\varphi_{A,{\mathfrak m}'\!,{\mathfrak m }}$
  is surjective and induces the  exact sequence
\begin{eqnarray*}
 \quad \; 1 \,\longrightarrow \, U_{\! A,{\mathfrak m }} \,\longrightarrow\,
 U_{\! A,{\mathfrak m }'} \,\longrightarrow\,
 U(\,\mathcal{O}_{\! A,{\mathfrak m }'},\mathcal{O}_{\! A,{\mathfrak m }}\,)
  \,\longrightarrow\,
 Cl_{A,{\mathfrak m }}\, \longrightarrow \,Cl_{A,{\mathfrak m }'} \,\longrightarrow \, 1
 \,.
 \end{eqnarray*}
 Thus we have
 \begin{eqnarray}
 |\,U(\,\mathcal{O}_{\! A,{\mathfrak m }'},\mathcal{O}_{\! A,{\mathfrak m }}\,) \,| =
\,[\,U_{\! A,{\mathfrak m }'}\!:U_{\! A,{\mathfrak m }} ]
 \,| \,Cl_{A,{\mathfrak m }}\,|\, / \,| \,Cl_{A,{\mathfrak m }'}\,|\,.
 \end{eqnarray}

\begin{lemma}
 For each   class $ \widetilde{\mathfrak{A}} $ of $Cl_{A,\mathfrak m}$,
  we have
\begin{eqnarray*}
 \quad \zeta( \,s \, , \widetilde{\mathfrak{A}} \,,
  \mathcal{O}_{\! A,{\mathfrak m }}\, ) = \!\!\!\!
 \sum_{\mathfrak c \, \mathfrak m_0 '={\mathfrak m_0 }}
 \!\!
 \frac{ \,
  [\,\,U_{\! A,{\mathfrak m }'} :
   U_{\! A,{\mathfrak m }}\, ] \,
  }{\, {\bf N}{\mathfrak c}^{\, (n+1) s } }\,\,
  \zeta^{\,*}\!( \,s \, ,
  [ \,{\mathfrak c}^{-1}\mathcal{O}_{\! A,{\mathfrak m }'}  ]\,
  \varphi_{A,{\mathfrak m }'\!,{\mathfrak m }}
  (\, \widetilde{\mathfrak{A}}\, ) ,
  \mathcal{O}_{\! A,{\mathfrak m }'} )\,,
 \end{eqnarray*}
 where $ {\mathfrak m }' $ denotes
   the modulus $ {\mathfrak m }_0 '\,{\mathfrak m }_\infty$
   and the sum runs over the pairs of integral $\mathcal{O}_k$-ideals
   $\mathfrak c$ and $\mathfrak m_0 '$ with
  $ \mathfrak c \,\mathfrak m_0 '={\mathfrak m_0 }\,$.
\end{lemma}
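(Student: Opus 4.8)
The plan is to mirror the proof of Lemma 5.3, replacing the rational integer $c$ there by an integral $\mathcal{O}_k$-ideal $\mathfrak c$, and the elementary index $[\mathcal{O}_{k,m'}^*:\mathcal{O}_{k,m}^*]$ by the index $[U_{A,\mathfrak m'}:U_{A,\mathfrak m}]$ of equivariant unit groups. First I would establish the $\mathcal O_k$-module analogue of Lemma 5.2. Given an integral invertible $\mathcal{O}_{A,\mathfrak m}$-ideal $\tilde{\mathfrak a}$ in $\widetilde{\mathfrak A}$, I form $\tilde{\mathfrak a}+\mathfrak m_0\mathcal{O}_A$, which contains the conductor $\mathfrak m_0\mathcal O_A$ and is contained in $\mathcal{O}_{A,\mathfrak m}=\mathcal O_k\mathbf 1+\mathfrak m_0\mathcal O_A$, and I let $\mathfrak c$ be the integral $\mathcal O_k$-ideal determined by $(\tilde{\mathfrak a}+\mathfrak m_0\mathcal O_A)\cap\mathcal O_k\mathbf 1=\mathfrak c\,\mathbf 1$. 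The modular law (using that $\mathcal{O}_{A,\mathfrak m}$ is an $\mathcal O_k$-module) then gives $\tilde{\mathfrak a}+\mathfrak m_0\mathcal O_A=\mathfrak c\,\mathbf 1+\mathfrak m_0\mathcal O_A=\mathfrak c\,\mathcal{O}_{A,\mathfrak m'}$, where $\mathfrak m_0'=\mathfrak m_0\mathfrak c^{-1}$ and $\mathfrak m'=\mathfrak m_0'\mathfrak m_\infty$; since $\mathfrak m_0\mathcal O_A\cap\mathcal O_k\mathbf 1=\mathfrak m_0\mathbf 1\subseteq\mathfrak c\,\mathbf 1$ we have $\mathfrak c\mid\mathfrak m_0$, so $\mathfrak m_0'$ is integral. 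Setting $\tilde{\mathfrak b}=\mathfrak c^{-1}\tilde{\mathfrak a}\,\mathcal{O}_{A,\mathfrak m'}$ and multiplying the previous identity by $\mathfrak c^{-1}$ yields $\mathfrak c^{-1}\tilde{\mathfrak a}+\mathfrak m_0'\mathcal O_A=\mathcal{O}_{A,\mathfrak m'}$, showing that $\tilde{\mathfrak b}$ is an integral $\mathcal{O}_{A,\mathfrak m'}$-ideal prime to the conductor $\mathfrak m_0'\mathcal O_A$, of class $[\mathfrak c^{-1}\mathcal O_{A,\mathfrak m'}]\,\varphi_{A,\mathfrak m'\!,\mathfrak m}(\widetilde{\mathfrak A})$ in $Cl_{A,\mathfrak m'}$.

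Next I record the norm relation. Because the norm of an invertible ideal is computed after extension to $\mathcal O_A$, one has $\mathbf N\tilde{\mathfrak a}=\mathbf N(\tilde{\mathfrak a}\,\mathcal O_{A,\mathfrak m'})=\mathbf N(\mathfrak c\,\tilde{\mathfrak b})=\mathbf N(\mathfrak c\,\mathcal O_{A,\mathfrak m'})\,\mathbf N\tilde{\mathfrak b}$, and since $\mathfrak c\,\mathcal O_A$ is the componentwise product of $n+1$ copies of $\mathfrak c$, we get $\mathbf N(\mathfrak c\,\mathcal O_{A,\mathfrak m'})=(\mathbf N\mathfrak c)^{n+1}$. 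Hence $\mathbf N\tilde{\mathfrak a}^{-s}=(\mathbf N\mathfrak c)^{-(n+1)s}\,\mathbf N\tilde{\mathfrak b}^{-s}$, which accounts for the denominator factor $\mathbf N\mathfrak c^{(n+1)s}$.

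The heart of the argument is the converse and the fibre count. Given a factorization $\mathfrak c\,\mathfrak m_0'=\mathfrak m_0$ and an integral $\mathcal O_{A,\mathfrak m'}$-ideal $\tilde{\mathfrak b}$ prime to $\mathfrak m_0'\mathcal O_A$ lying in the class $[\mathfrak c^{-1}\mathcal O_{A,\mathfrak m'}]\varphi_{A,\mathfrak m'\!,\mathfrak m}(\widetilde{\mathfrak A})$, I would show that the invertible $\mathcal{O}_{A,\mathfrak m}$-ideals $\tilde{\mathfrak a}$ in $\widetilde{\mathfrak A}$ lying under $\mathfrak c\,\tilde{\mathfrak b}$ are exactly those produced by the reduction above. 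Indeed, extending to $\mathcal O_A$ and using primality of $\tilde{\mathfrak b}$ to the conductor gives $\mathcal O_A(\tilde{\mathfrak a}+\mathfrak m_0\mathcal O_A)=\mathfrak c\,\mathcal O_A$, which forces the diagonal invariant of $\tilde{\mathfrak a}$ to equal $\mathfrak c$ and its reduction to equal $\tilde{\mathfrak b}$. To count such $\tilde{\mathfrak a}$, I would fix one (it exists because $\varphi_{A,\mathfrak m'\!,\mathfrak m}$ is surjective and $[\mathfrak c\,\tilde{\mathfrak b}]=\varphi_{A,\mathfrak m'\!,\mathfrak m}(\widetilde{\mathfrak A})$) and observe that any other differs from it by an element of $U(\mathcal O_{A,\mathfrak m'},\mathcal O_{A,\mathfrak m})$ of trivial class in $Cl_{A,\mathfrak m}$; by the exact sequence preceding the lemma these are precisely the principal ideals $\gamma\,\mathcal O_{A,\mathfrak m}$ with $\gamma\in U_{A,\mathfrak m'}$, of which there are $[U_{A,\mathfrak m'}:U_{A,\mathfrak m}]$ distinct ones. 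Thus each admissible pair $(\mathfrak c,\tilde{\mathfrak b})$ has exactly $[U_{A,\mathfrak m'}:U_{A,\mathfrak m}]$ preimages.

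Finally, grouping the terms of $\zeta(s,\widetilde{\mathfrak A},\mathcal O_{A,\mathfrak m})$ according to the invariant $\mathfrak c$, then summing $\mathbf N\tilde{\mathfrak b}^{-s}$ over all prime-to-conductor ideals $\tilde{\mathfrak b}$ in the stated class to obtain $\zeta^*(s,[\mathfrak c^{-1}\mathcal O_{A,\mathfrak m'}]\varphi_{A,\mathfrak m'\!,\mathfrak m}(\widetilde{\mathfrak A}),\mathcal O_{A,\mathfrak m'})$, and inserting the norm factor, yields the claimed identity. I expect the main obstacle to be precisely the fibre count in the third step: keeping the twist by $[\mathfrak c^{-1}\mathcal O_{A,\mathfrak m'}]$ consistent and correctly extracting $[U_{A,\mathfrak m'}:U_{A,\mathfrak m}]$ from the exact sequence, since here $\mathfrak c$ is a genuine, possibly non-principal, $\mathcal O_k$-ideal rather than the scalar $c$ of the quadratic case.
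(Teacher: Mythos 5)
Your proposal is correct and is essentially the paper's own proof: the paper likewise defines $\mathfrak c$ by $(\tilde{\mathfrak a}+\mathfrak m_0\mathcal{O}_A)\cap\mathcal{O}_k\,\mathbf 1=\mathfrak c\,\mathbf 1$, invokes the modular law of Lemma 5.2 to get $\tilde{\mathfrak a}+\mathfrak m_0\mathcal{O}_A=\mathfrak c\,\mathcal{O}_{A,\mathfrak m'}$, passes to $\tilde{\mathfrak b}=\mathfrak c^{-1}\tilde{\mathfrak a}\,\mathcal{O}_{A,\mathfrak m'}$ with $\mathbf N\tilde{\mathfrak a}=\mathbf N\mathfrak c^{\,n+1}\,\mathbf N\tilde{\mathfrak b}$, and counts the invertible $\mathcal{O}_{A,\mathfrak m}$-ideals of $\widetilde{\mathfrak{A}}$ lying under $\mathfrak c\,\tilde{\mathfrak b}$ as $\varepsilon\,\tilde{\mathfrak a}$ with $\varepsilon$ running over representatives of $U_{A,\mathfrak m'}$ modulo $U_{A,\mathfrak m}$, producing the factor $[\,U_{A,\mathfrak m'}:U_{A,\mathfrak m}\,]$. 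Your extraction of that index from the kernel of $U(\mathcal{O}_{A,\mathfrak m'},\mathcal{O}_{A,\mathfrak m})\rightarrow Cl_{A,\mathfrak m}$ via the exact sequence is the same mechanism, just stated slightly more explicitly than in the paper.
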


\begin{proof}
 Let $ \tilde{\mathfrak a} $ be an integral invertible
  $\mathcal{O}_{\! A,{\mathfrak m }}$-ideal in the  class
  $\widetilde{{\mathfrak A }}$.
 Let $ {\mathfrak c} $ and $ {\mathfrak m }_0'$ be the integral $\mathcal{O}_k$-ideals
   given by
   $ {\mathfrak c}\,{\bf 1} =
   (\,\tilde{\mathfrak a} + {\mathfrak m }_0 \mathcal{O}_{A} \,)
    \,\cap \,\mathcal{O}_k \,{\bf 1} \,$
   and
   ${\mathfrak m }_0' = {\mathfrak m}_0\, /\, {\mathfrak c} \,$.
 Put ${\mathfrak m }' = {\mathfrak m }_\infty \, {\mathfrak m }_0'\,$.
 Then by   modular law (see Lemma $5.2$), we have
 $ \tilde{\mathfrak a} + {\mathfrak m }_0 \mathcal{O}_{A}
   =  {\mathfrak c} \,\mathcal{O}_{\! A,{\mathfrak m }'} \,$.
  Thus
   $ \tilde{{\mathfrak b }}={\mathfrak c}^{-1}\tilde{\mathfrak a}\,
      \mathcal{O}_{\! A,{\mathfrak m }'} $
  is an integral $ \mathcal{O}_{\! A,{\mathfrak m }'}$-ideal
  prime to
  $ {\mathfrak m }_0' \,\mathcal{O}_{\! A}$ lying in the  class
  $ [ \,{\mathfrak c}^{-1}\mathcal{O}_{\! A,{\mathfrak m }'}  ]\,
   \varphi_{A,{\mathfrak m }'\!,{\mathfrak m }}
    (\, \widetilde{\mathfrak{A}}\, )$.
  Moreover, we have ${\bf N}\tilde{{\mathfrak a }}
   ={\bf N}{\mathfrak c }^{\,n+1}\, {\bf N}\tilde{{\mathfrak b
   }}\,$. \\
 Conversely, given   integral $\mathcal{O}_k$-ideals
  $ {\mathfrak c} $ and $ {\mathfrak m }_0'$ with
  ${\mathfrak c} \, {\mathfrak m }_0' = {\mathfrak m }_0 \,$, and
  given   any integral $ \mathcal{O}_{\! A,{\mathfrak m }'}$-ideal
   $ \tilde{{\mathfrak b }}$
  prime to the conductor of $ \mathcal{O}_{\! A,{\mathfrak m }'}$
   (\,here
   ${\mathfrak m }' =  {\mathfrak m }_0'\,{\mathfrak m }_\infty $)\,,
    all the invertible $ \mathcal{O}_{\! A,{\mathfrak m}}$-ideals
  $\tilde{\mathfrak a }$ under
  $ {\mathfrak c }\,\tilde{\mathfrak b }$ are  integral ideals of
  $ \mathcal{O}_{\! A,{\mathfrak m}}$ and  satisfy
  \begin{eqnarray*}
   (\,\tilde{\mathfrak a} + {\mathfrak m }_0 \mathcal{O}_{A} \,)
  \,\cap \,\mathcal{O}_k \,{\bf 1} =   {\mathfrak c}\,{\bf 1} \,
  \end{eqnarray*}
  (see the proof of Lemma $5.3$).
 Suppose further that $\tilde{{\mathfrak b }}$ lies in
   $[ \,{\mathfrak c}^{-1}\mathcal{O}_{\! A,{\mathfrak m }'}  ]\,
   \varphi_{A,{\mathfrak m }'\!,{\mathfrak m }}
    (\, \widetilde{\mathfrak{A}}\, )$. This means that there exists
    some
 $\mathcal{O}_{\! A,{\mathfrak m }}$-invertible
 $\tilde{{\mathfrak a }} \in \widetilde{\mathfrak{A}} $ such that
  $ \tilde{{\mathfrak a }}\,\mathcal{O}_{\! A,{\mathfrak m }'}=
  {\mathfrak c }\,\tilde{\mathfrak b }\,$.
 Then under ${\mathfrak c }\,\tilde{\mathfrak b }\,$ there are
   precisely
   $[\,\,U_{\! A,{\mathfrak m }'} :
   U_{\! A,{\mathfrak m }}\, ]$
   invertible $\mathcal{O}_{\! A,{\mathfrak m }}$-ideals
  lying in the class $\widetilde{\mathfrak{A}}\,$. They are given by
  $\varepsilon \,\tilde{\mathfrak a} \,$, where $\varepsilon $ goes over a
  complete set of representatives of $ U_{\! A,{\mathfrak m }'}$
  modulo $U_{\! A,{\mathfrak m }}$.
The Lemma  follows by grouping the terms in
 $ \zeta( \,s \, , \widetilde{\mathfrak{A}} \,,
  \mathcal{O}_{\! A,{\mathfrak m }}\, )$ according to
  $ (\,\tilde{\mathfrak a} + {\mathfrak m }_0 \,\mathcal{O}_{A} \,)
  \,\cap \,\mathcal{O}_k \,{\bf 1} \,$.

\end{proof}

 In the following we let $\widetilde{\chi} $
  be a character of $Cl_{A,\mathfrak m}$.
 Through the isomorphism $\Psi$,
  we identify  $\widetilde{\chi} $ as a character of
   $ Cl_k (\,\mathcal{O}_k) \times Cl_k (\mathfrak m)^{n}$
  and write
   $\widetilde{\chi} = \chi_0^{\, } \times \chi_1^{\, } \times \cdots \times \chi_n $,
  where $\chi_0^{\, }$ is a character of $Cl_k (\,\mathcal{O}_k)\,$
  and $\chi_i$ ($ 1 \leq i \leq n $) are characters of $Cl_k (\mathfrak m)$.
 Let $ {\mathfrak f}_i $ denote the finite part of the conductor of $\chi_i\,$, $1\leq i \leq n\,$.
 Let $\mathfrak f $ denote the least common multiple ideal of
  $ \,\mathfrak f_1,\cdots,\mathfrak f_n \,$.
 It is easy to see that $\widetilde{\chi} $ is defined at $Cl_{A,\mathfrak m '}$
  with $ {\mathfrak m }' = {\mathfrak m }_0 '\, {\mathfrak m }_\infty $
  if and only if  $\,\mathfrak f \,|\,\mathfrak m_0 '$ and
    $\mathfrak m_0 ' |\,{\mathfrak m_0 }$.
 As before, we use the same symbol $\widetilde{\chi} $ to denote
  the character of $Cl_{A,\mathfrak m '}$ which induces $\widetilde{\chi}\,$.

\begin{theorem}
 With notation as above, we have
 \begin{eqnarray*}
 \quad
  L( \,s \, , \widetilde{\chi} \,,\mathcal{O}_{\! A,{\mathfrak m}} )
   =
   \sum_{\mathfrak f \,|\,\mathfrak m_0 ' |\,{\mathfrak m_0 }}
    \chi_0^{\,} (\,{\mathfrak c }) \,
 \frac{ \,
  |\,U(\,\mathcal{O}_{\! A,{\mathfrak m }'},
        \mathcal{O}_{\! A,{\mathfrak m }}\,)
  \,|\,}
  {\,\, {\bf N}{\mathfrak c }^{\, (n+1) s } } \,
 L^{*}( \,s \, ,
   \widetilde{\chi} \,,\mathcal{O}_{\! A,{\mathfrak m}'} )\,,
  \end{eqnarray*}
  where $ \,{\mathfrak m }' =   {\mathfrak m }_0 '\,{\mathfrak m}_\infty\,$,
    ${\mathfrak c }= {\mathfrak m_0^{\,} }/\mathfrak m_0 '$
   and the sum extends over  integral $\mathcal{O}_k$-ideals
   $\mathfrak m_0'$ such that
    $\,\mathfrak f \,|\, \mathfrak m_0 '$ and
    $\mathfrak m_0 ' \,|\,{\mathfrak m_0 }\,$.
 In particular, if $\,\mathfrak f = \mathfrak m_0\,$, then we have
\begin{eqnarray*}
 L( \,s \, , \widetilde{\chi} \,,\mathcal{O}_{\! A,{\mathfrak m}} ) =
  L^{*}( \,s \, , \widetilde{\chi} \,,\mathcal{O}_{\! A,{\mathfrak m}} )\,.
 \end{eqnarray*}
\end{theorem}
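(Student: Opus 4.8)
The plan is to mirror the proof of Theorem $5.4$ step for step, replacing the partial-zeta decomposition of Lemma $5.3$ by its higher-rank counterpart Lemma A$.6$, and the quadratic class-group description by the isomorphism $\Psi$ of Lemma A$.2$. First I would write $L( s , \widetilde\chi , \mathcal{O}_{\! A,\mathfrak m}) = \sum_{\widetilde{\mathfrak A}}\widetilde\chi(\widetilde{\mathfrak A})\,\zeta( s , \widetilde{\mathfrak A}, \mathcal{O}_{\! A,\mathfrak m})$ and substitute Lemma A$.6$, then interchange the finite sum over factorizations $\mathfrak c\,\mathfrak m_0'=\mathfrak m_0$ with the sum over classes $\widetilde{\mathfrak A}\in Cl_{A,\mathfrak m}$. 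This isolates, for each such $\mathfrak m_0'$, the quantity $\sum_{\widetilde{\mathfrak A}}\widetilde\chi(\widetilde{\mathfrak A})\,\zeta^{*}( s ,[\mathfrak c^{-1}\mathcal{O}_{\! A,\mathfrak m'}]\,\varphi_{A,\mathfrak m'\!,\mathfrak m}(\widetilde{\mathfrak A}),\mathcal{O}_{\! A,\mathfrak m'})$, which I would reorganize by grouping $\widetilde{\mathfrak A}$ according to its image $\widetilde{\mathfrak A}'=\varphi_{A,\mathfrak m'\!,\mathfrak m}(\widetilde{\mathfrak A})$.

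The crux is then the inner character sum $\sum_{\varphi_{A,\mathfrak m'\!,\mathfrak m}(\widetilde{\mathfrak A})=\widetilde{\mathfrak A}'}\widetilde\chi(\widetilde{\mathfrak A})$, exactly as in the proof of Theorem $5.4$. By orthogonality it vanishes unless $\widetilde\chi$ is trivial on $\mbox{Ker}\,\varphi_{A,\mathfrak m'\!,\mathfrak m}$, i.e.\ unless $\widetilde\chi$ is defined at $Cl_{A,\mathfrak m'}$; by the discussion preceding the theorem this occurs precisely when $\mathfrak f\mid\mathfrak m_0'$ (and $\mathfrak m_0'\mid\mathfrak m_0$). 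For the surviving divisors the sum equals $|\mbox{Ker}\,\varphi_{A,\mathfrak m'\!,\mathfrak m}|\cdot\widetilde\chi(\widetilde{\mathfrak A}')$, with $\widetilde\chi$ now read as a character of $Cl_{A,\mathfrak m'}$.

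The one genuinely new bookkeeping step, and the place I would be most careful, is the twist class $[\mathfrak c^{-1}\mathcal{O}_{\! A,\mathfrak m'}]$. Re-indexing the remaining sum by $\widetilde{\mathfrak B}'=[\mathfrak c^{-1}\mathcal{O}_{\! A,\mathfrak m'}]\widetilde{\mathfrak A}'$ converts it into $L^{*}( s ,\widetilde\chi,\mathcal{O}_{\! A,\mathfrak m'})$ times the scalar $\widetilde\chi([\mathfrak c\,\mathcal{O}_{\! A,\mathfrak m'}])$. Here I would invoke $\Psi$ from Lemma A$.2$: since $(\mathfrak c\,{\bf 1})\mathcal{O}_{\! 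A,\mathfrak m'}$ has associated $\mathcal{O}_A$-ideal $(\mathfrak c,\dots,\mathfrak c)$, its image under $\Psi$ is $([\mathfrak c],1,\dots,1)$, so among the factors of $\widetilde\chi=\chi_0\times\chi_1\times\cdots\times\chi_n$ only $\chi_0$ acts nontrivially, giving $\widetilde\chi([\mathfrak c\,\mathcal{O}_{\! A,\mathfrak m'}])=\chi_0(\mathfrak c)$, which matches the claimed coefficient.

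Finally I would assemble the constants. Multiplying $|\mbox{Ker}\,\varphi_{A,\mathfrak m'\!,\mathfrak m}|=|Cl_{A,\mathfrak m}|/|Cl_{A,\mathfrak m'}|$ (from surjectivity) by the index $[U_{\! A,\mathfrak m'}:U_{\! A,\mathfrak m}]$ already carried in Lemma A$.6$, formula (A.2) collapses the product into $|U(\mathcal{O}_{\! A,\mathfrak m'},\mathcal{O}_{\! A,\mathfrak m})|$, and the factor ${\bf N}\mathfrak c^{\,(n+1)s}$ in the denominator is retained from Lemma A$.6$. This yields the displayed identity, the sum running over $\mathfrak f\mid\mathfrak m_0'\mid\mathfrak m_0$ with $\mathfrak c=\mathfrak m_0/\mathfrak m_0'$. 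The special case is immediate: if $\mathfrak f=\mathfrak m_0$ the only admissible divisor is $\mathfrak m_0'=\mathfrak m_0$, whence $\mathfrak c=\mathcal{O}_k$, ${\bf N}\mathfrak c=1$, $\chi_0(\mathfrak c)=1$ and $|U(\mathcal{O}_{\! A,\mathfrak m},\mathcal{O}_{\! A,\mathfrak m})|=1$, so $L=L^{*}$. I expect no serious obstacle beyond this constant-tracking; the whole argument is the orthogonality computation of Theorem $5.4$ transported to the higher-rank setting provided by Lemmas A$.2$ and A$.6$.
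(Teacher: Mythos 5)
Your proposal is correct and follows essentially the same route as the paper's own proof: substitute Lemma A$.6$, interchange the sums, apply character orthogonality on $\mbox{Ker}\,\varphi_{A,{\mathfrak m}'\!,{\mathfrak m}}$ to cut the sum down to divisors with $\mathfrak f \,|\,\mathfrak m_0'$, and collapse the unit indices via (A$.2$). The only difference is cosmetic: you re-index the classes after the orthogonality step rather than before, and you spell out (via $\Psi$) the identification $\widetilde{\chi}(\,[\,{\mathfrak c}\,\mathcal{O}_{\! A,{\mathfrak m}'}\,]\,)=\chi_0^{\,}({\mathfrak c})$, which the paper leaves implicit in its closing ``the Theorem follows.''
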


\begin{proof}
 By Lemma A$.6$, we can write
  $ L( \,s \, , \widetilde{\chi} \,,\mathcal{O}_{\! A,{\mathfrak m}} )$
   as
 \begin{eqnarray*}
 & & \sum_{\widetilde{\mathfrak{A}} \in Cl_{A,{\mathfrak m }} }
  \widetilde{\chi}(\,\widetilde{\mathfrak{A}}\,) \,
  \sum_{\mathfrak c \,\mathfrak m_0 '={\mathfrak m_0 }}
 \!\!
 \frac{ \,
  [\,U_{\! A,{\mathfrak m }'} :
   U_{\! A,{\mathfrak m }}\, ] \,
  }{\, {\bf N}{\mathfrak c}^{\, (n+1) s } }\,\,
  \zeta^{\,*}\!( \,s \, ,
  [ \,{\mathfrak c}^{-1}\mathcal{O}_{\! A,{\mathfrak m }'}  ]\,
  \varphi_{A,{\mathfrak m }'\!,{\mathfrak m }}
  (\, \widetilde{\mathfrak{A}}\, ) ,
  \mathcal{O}_{\! A,{\mathfrak m }'} )\, \\ [4pt]
  &=&
  \sum_{\mathfrak c \,\mathfrak m_0 '={\mathfrak m_0 }}
  \!\!
 \frac{ \,
  [\,U_{\! A,{\mathfrak m }'} :
   U_{\! A,{\mathfrak m }}\, ] \,
  }{\, {\bf N}{\mathfrak c}^{\, (n+1) s } }\,\,
  \sum_{\widetilde{\mathfrak{A}}' \in Cl_{A,{\mathfrak m }'} }
   \!\!
    \sum_{ \substack{
     \widetilde{\mathfrak{A}}\, \in \,Cl_{A,{\mathfrak m }} \\
       \varphi_{A,{\mathfrak m }'\!,{\mathfrak m }}
  ( \widetilde{\mathfrak{A}} )
  = [ {\mathfrak c}\,\mathcal{O}_{\! A,{\mathfrak m }'}  ]\,
    \widetilde{\mathfrak{A}}' } }
     \widetilde{\chi}(\,\widetilde{\mathfrak{A}}\,)
   \,
  \zeta^{\,*}\!( \,s \, ,
    \widetilde{\mathfrak{A}}' ,
  \mathcal{O}_{\! A,{\mathfrak m }'} )\,.
\end{eqnarray*}
 For each $\widetilde{\mathfrak{A}}'\in Cl_{A,{\mathfrak m }'}$,
 there  is a $ \,\widetilde{\mathfrak{A}}^* \in Cl_{A,{\mathfrak m }}\,$
 such that
 $ \varphi_{A,{\mathfrak m }'\!,{\mathfrak m }}\,( \,\widetilde{\mathfrak{A}}^* )
  \, = \, [ \,{\mathfrak c}\,\mathcal{O}_{\! A,{\mathfrak m }'}  ]\,
   \widetilde{\mathfrak{A}}'$.
 So we have
\begin{eqnarray*}
  \sum_{ \substack{
     \widetilde{\mathfrak{A}} \,\in \, Cl_{A,{\mathfrak m }} \\
       \varphi_{A,{\mathfrak m }'\!,{\mathfrak m }}
  ( \widetilde{\mathfrak{A}} )
  \,=\, [ {\mathfrak c}\,\mathcal{O}_{\! A,{\mathfrak m }'}  ]\,
    \widetilde{\mathfrak{A}}' } } \!\!\!\!\!\! \!\!\!\!\!\!  \!\!
     \widetilde{\chi}(\,\widetilde{\mathfrak{A}}\,)
   & = &  \!\! \widetilde{\chi}(\,\widetilde{\mathfrak{A}}^*) \!\!
   \sum_{\widetilde{\mathfrak{B}} \,\in \,
   \mbox{\scriptsize{Ker}}\,\varphi_{A,{\mathfrak m }'\!,{\mathfrak m
   }}} \!\!\!\!\!\!
     \widetilde{\chi}(\,\widetilde{\mathfrak{B}}\,)\\
   & = & \!\! \!\!
 \left\{
   \begin{array}{ll}
     \widetilde{\chi}(\,\widetilde{\mathfrak{A}}^*)
     \,|\,\mbox{Ker}\,
     \varphi_{A,{\mathfrak m }'\!,{\mathfrak m
   }}\,|\,,
   &  \mbox{if $\widetilde{\chi}$ is trivial on
   $\mbox{Ker}\,\varphi_{A,{\mathfrak m }'\!,{\mathfrak m
   }}$};  \\[4pt]
      \qquad \qquad 0\,, & \mbox{ otherwise\,. }
   \end{array}
\right.
\end{eqnarray*}
 The  sum above vanishes unless $\widetilde{\chi}$ is defined at
 $Cl_{A,{\mathfrak m }'}$.
 Thus, using  (A$.2$), we get
\begin{eqnarray*}
  L( \,s \, , \widetilde{\chi} \,,\mathcal{O}_{\! A,{\mathfrak m}} ) =
  \!\! \!\!
 \sum_{ \substack{ \mathfrak c \,\mathfrak m_0 '={\mathfrak m_0 } \\
                {\mathfrak f }\,|\,\mathfrak m_0 '   }    }
  \!\! \!\!
  \widetilde{\chi}(\,{\mathfrak c}\mathcal{O}_{\! A,{\mathfrak m }'}) \,
 \frac{ \,
  |\,U(\mathcal{O}_{\! A,{\mathfrak m }'},\mathcal{O}_{\! A,{\mathfrak m }}) \,|
  }{\, {\bf N}{\mathfrak c}^{\, (n+1) s } } \!\!\!\!\!
  \sum_{\widetilde{\mathfrak{A}}' \in Cl_{A,{\mathfrak m }'} }\!\!\!\!\!\!
   \widetilde{\chi}(\,\widetilde{\mathfrak{A}}')\,
  \zeta^{\,*}\!( \,s \, ,
    \widetilde{\mathfrak{A}}'\! ,
  \mathcal{O}_{\! A,{\mathfrak m }'} ),
\end{eqnarray*}
the Theorem follows.
\end{proof}

\begin{remark}
 It is easy to prove that
 \begin{eqnarray*}
 |\,U(\,\mathcal{O}_{\! A,{\mathfrak m }'},
  \mathcal{O}_{\! A,{\mathfrak m }}\,) \,| =
  \Big(\,
  {\bf N} \big( \, {\mathfrak m}_0 \, / \, {\mathfrak m}_0' \,\big)
   \prod_{ \substack{ {\mathfrak p}\, |\, {\mathfrak m}_0  \\
                      {\mathfrak p}\, \nmid \, {\mathfrak m}_0 '} }
   \big( \,1 - \frac{1}{\, {\bf N}{\mathfrak p} }  \,\big) \, \Big)^n \,.
\end{eqnarray*}
\end{remark}

 Let $\mu$ denote the M${\ddot{\mbox{o}}}$bius function on integral
 ideals of ${{\mathcal{O}}_k}\,$, that is, $ \mu  $ is a function
 satisfying
  $\, \mu ( \,{{\mathcal{O}}_k} ) = 1 \,, \; \mu (\, {\mathfrak p} ) = -1\,$
  for each prime ideal $ {\mathfrak p} $ of ${{\mathcal{O}}_k}\,$,
  and $\, \mu (\, {\mathfrak p} ^{\,l} ) = 0 \; $  for all integers $ l > 1 \,$.
 Moreover,
  $  \mu ( {\mathfrak a} ) \,\mu ( {\mathfrak b } ) = \mu (\, {\mathfrak a}\, {\mathfrak b } ) \,$
  if  ${\mathfrak a} $ and $ {\mathfrak b } \,$ are coprime integral ideals.
 The M${\ddot{\mbox{o}}}$bius function $\mu$ satisfies the  relation
 \begin{eqnarray*}
 \sum_{ \mathfrak c  | \, \mathfrak c ' }
   \,\mu (\,\mathfrak c \,)
 \, =
 \left\{
   \begin{array}{ll}
    1\,, & \; \mbox{if $\mathfrak c ' =\mathcal{O}_k $}\,;  \\[4pt]
    0\,, & \mbox{ otherwise\,. }
       \end{array}
\right.
\end{eqnarray*}
 for all integral ideals  $\mathfrak c'$ of $\mathcal{O}_k\,$.
 From this relation  we deduce immediately  the following inversion formula.
\begin{corollary}
 With notation as above, we have
 \begin{eqnarray*}
  L^{*}( \,s \, , \widetilde{\chi} \,,\mathcal{O}_{\! A,{\mathfrak m}} )
   =
   \sum_{\mathfrak f \,|\,{\mathfrak m }_0 ' |\,{\mathfrak m_0 }}
   \mu (\,{\mathfrak c }) \,
    \chi_0^{\,} (\,{\mathfrak c }) \,
 \frac{ \,
  |\,U(\,\mathcal{O}_{\! A,{\mathfrak m }'},
        \mathcal{O}_{\! A,{\mathfrak m }}\,)
  \,|\,}
  {\, {\bf N}{\mathfrak c }^{\, (n+1) s } } \,
 L( \,s \, ,
   \widetilde{\chi} \,,\mathcal{O}_{\! A,{\mathfrak m}'} )\,,
  \end{eqnarray*}
 where in  the sum above
  $ \,{\mathfrak m }' =  {\mathfrak m }_0 '\,{\mathfrak m }_\infty $
  and
  $ {\mathfrak c } = {\mathfrak m_0 }/\mathfrak m_0 ' \,$.
\end{corollary}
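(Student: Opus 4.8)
The plan is to obtain the corollary as a formal Möbius inversion of Theorem A.7, so that the only substantive input beyond the Möbius relation recorded just above the statement is a transitivity property of the indices $|U(\mathcal{O}_{\!A,\mathfrak m'},\mathcal{O}_{\!A,\mathfrak m})|$. First I would substitute Theorem A.7 into the right-hand side of the claimed identity: for each $\mathfrak m_0'$ with $\mathfrak f\mid\mathfrak m_0'\mid\mathfrak m_0$, writing $\mathfrak m'=\mathfrak m_0'\,\mathfrak m_\infty$, Theorem A.7 expands $L(s,\widetilde\chi,\mathcal{O}_{\!A,\mathfrak m'})$ as a sum over $\mathfrak m_0''$ with $\mathfrak f\mid\mathfrak m_0''\mid\mathfrak m_0'$ of a coefficient times $L^*(s,\widetilde\chi,\mathcal{O}_{\!A,\mathfrak m''})$. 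I would then interchange the two summations, so that the outer index becomes the intermediate modulus $\mathfrak m_0''$ (with $\mathfrak f\mid\mathfrak m_0''\mid\mathfrak m_0$) and the inner sum runs over $\mathfrak m_0'$ with $\mathfrak m_0''\mid\mathfrak m_0'\mid\mathfrak m_0$; note the constraint $\mathfrak f\mid\mathfrak m_0'$ is then automatic. The goal is to show the total coefficient of each $L^*(s,\widetilde\chi,\mathcal{O}_{\!A,\mathfrak m''})$ vanishes unless $\mathfrak m''=\mathfrak m$, where it equals $1$.

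The coefficient of $L^*(s,\widetilde\chi,\mathcal{O}_{\!A,\mathfrak m''})$ is the inner sum over $\mathfrak m_0'$ of the product $\mu(\mathfrak c)\chi_0(\mathfrak c)\,|U(\mathcal{O}_{\!A,\mathfrak m'},\mathcal{O}_{\!A,\mathfrak m})|\,\mathbf{N}\mathfrak c^{-(n+1)s}$ with $\chi_0(\mathfrak c'')\,|U(\mathcal{O}_{\!A,\mathfrak m''},\mathcal{O}_{\!A,\mathfrak m'})|\,\mathbf{N}\mathfrak c''^{-(n+1)s}$, where $\mathfrak c=\mathfrak m_0/\mathfrak m_0'$ and $\mathfrak c''=\mathfrak m_0'/\mathfrak m_0''$. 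Here I would use three multiplicativities to strip off the $\mathfrak m_0'$-dependence from the non-$\mu$ factors: that $\chi_0$ is a character gives $\chi_0(\mathfrak c)\chi_0(\mathfrak c'')=\chi_0(\mathfrak m_0/\mathfrak m_0'')$; that the norm is multiplicative gives $\mathbf{N}\mathfrak c\,\mathbf{N}\mathfrak c''=\mathbf{N}(\mathfrak m_0/\mathfrak m_0'')$; and a transitivity relation $|U(\mathcal{O}_{\!A,\mathfrak m'},\mathcal{O}_{\!A,\mathfrak m})|\,|U(\mathcal{O}_{\!A,\mathfrak m''},\mathcal{O}_{\!A,\mathfrak m'})|=|U(\mathcal{O}_{\!A,\mathfrak m''},\mathcal{O}_{\!A,\mathfrak m})|$. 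Once these factors are pulled out of the inner sum, they depend only on $\mathfrak m_0''$, and what remains is $\sum_{\mathfrak m_0''\mid\mathfrak m_0'\mid\mathfrak m_0}\mu(\mathfrak m_0/\mathfrak m_0')=\sum_{\mathfrak c\mid\mathfrak m_0/\mathfrak m_0''}\mu(\mathfrak c)$, which by the Möbius relation recorded just before the corollary equals $1$ if $\mathfrak m_0''=\mathfrak m_0$ and $0$ otherwise.

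Hence only the term $\mathfrak m''=\mathfrak m$ survives, and there $\mathfrak c=\mathcal{O}_k$, so $\chi_0(\mathfrak c)=1$, $\mathbf{N}\mathfrak c=1$, and $|U(\mathcal{O}_{\!A,\mathfrak m},\mathcal{O}_{\!A,\mathfrak m})|=1$, leaving exactly $L^*(s,\widetilde\chi,\mathcal{O}_{\!A,\mathfrak m})$. The transitivity relation is the one step requiring genuine care, and I expect it to be the main obstacle: the prime-products in Remark A.8 are \emph{not} functions of the quotient $\mathfrak c$ alone, so one cannot simply multiply two copies of that formula blindly. I would establish it either directly from Remark A.8, checking that the primes $\mathfrak p\mid\mathfrak m_0,\ \mathfrak p\nmid\mathfrak m_0'$ and the primes $\mathfrak p\mid\mathfrak m_0',\ \mathfrak p\nmid\mathfrak m_0''$ are disjoint and together exhaust the primes $\mathfrak p\mid\mathfrak m_0,\ \mathfrak p\nmid\mathfrak m_0''$, or, more cleanly, by telescoping the index formula (A.2) via the chain $U_{\!A,\mathfrak m}\subseteq U_{\!A,\mathfrak m'}\subseteq U_{\!A,\mathfrak m''}$ together with the cancellation of the intermediate class numbers $|Cl_{A,\mathfrak m'}|$. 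Everything else is routine formal manipulation.
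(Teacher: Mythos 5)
Your proposal is correct and takes essentially the same route the paper intends: the paper offers no detailed argument, asserting only that the corollary follows ``immediately'' from the M\"obius relation applied to Theorem A.7, and your substitution-and-interchange computation is precisely the standard verification of that inversion. The one ingredient you rightly single out --- the transitivity $|\,U(\mathcal{O}_{\!A,\mathfrak m'},\mathcal{O}_{\!A,\mathfrak m})\,|\;|\,U(\mathcal{O}_{\!A,\mathfrak m''},\mathcal{O}_{\!A,\mathfrak m'})\,| = |\,U(\mathcal{O}_{\!A,\mathfrak m''},\mathcal{O}_{\!A,\mathfrak m})\,|$ --- is left implicit in the paper and does hold, most cleanly by telescoping the index formula (A.2) along the chain $U_{\!A,\mathfrak m}\subseteq U_{\!A,\mathfrak m'}\subseteq U_{\!A,\mathfrak m''}$, exactly as you suggest (and it also follows from Remark A.8, since the relevant sets of primes are disjoint and exhaustive).
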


\bibliographystyle{amsplain}

\begin{thebibliography}{10}



\bibitem{Baga} M. Bhargava, Higher Composition Laws I: A new view on
Gauss composition, and quadratic generalizations,  Ann. of Math. 159
(2004), 217-250.


\bibitem{CT} H. Cohen and F. Thorne, Dirichlet series associated to
 cubic fields with given quadratic resolvent, Michigan Math. J. 63 (2013), no. 2, 253--273.


\bibitem{CRT} H. Cohen, S. Rubinstein-Salzedo and F. Thorne,
Identities for field extensions generalizing the Ohno-Nakagawa
relations, arXiv:1405.1075v1, 2014.

\bibitem{D-H} H. Davenport and H. Heilbronn, On the density of
discriminants of cubic fields II, Proc. Roy. Soc. London Ser. A 322
(1971), 405-420.

\bibitem{D-W} B. Datskovsky and D. J. Wright, The adelic zeta
function associated with the space of binary cubic forms, II: Local
theory, J. Reine Angew. Math. 367 (1986), 27-75.

\bibitem{Dioses} J. Dioses, Generalizing the theorem of Nakagawa on
binary cubic forms to number fields, Thesis, Oklahoma State
University, 2012.

\bibitem{Eisen} G. Eisenstein, Th$\acute{\mbox{e}}$or$\grave{\mbox{e}}$mes sur les formes
cubiques et solution d'une $\acute{\mbox{e}}$quation du
quatri$\grave{\mbox{e}}$me degr$\acute{\mbox{e}}$ $\acute{\mbox{a}}$
quatre ind$\acute{\mbox{e}}$termin$\acute{\mbox{e}}$es, J. Reine
Angew. Math. 27 (1844), 75-79.


\bibitem{Gao} X. Gao, On the zeta function associated with module classes of a number
field, J. Number Theory 131 (2011), 994-1019.

\bibitem{Lang} S. Lang, Algebraic Number Theory, 2nd ed,
Springer-Verlag, New York, 1994.

\bibitem{Lang2} S. Lang, Elliptic Functions, 2nd ed,
Springer-Verlag, New York, 1987.


\bibitem{Nak2} J. Nakagawa, On the relations among the class
numbers of binary cubic forms, Invent. math. 134 (1998), 101-138.

\bibitem{Neuk} J. Neukirch, Algebraic Number Theory,
Springer-Verlag, 1999.

\bibitem{Ohno} Y. Ohno, A conjecture on coincidence among the zeta
functions associated with the space of binary cubic forms, Amer. J.
Math. 119 (1997), 1083-1094.

\bibitem{OTS} Y. Ohno, T. Taniguchi, and S. Wukatsuki, Relations
among Dirichlet series whose cofficients are class numbers of binary
cubic forms,  Amer. J. Math. 131 (2009), 1525-1541.

\bibitem{Os} C. A. Osborne, $ GL_2( \mathcal{O}_K ) $ - invariant
lattices in the space of binary cubic forms with coefficients in the
number field $K\,$, Proc. Amer. Math. Soc. 142 (2014), 2313-2325.

\bibitem{Reiner} C.J. Bushnell and I. Reiner,
A survey of analytic methods in noncommutative number theory,
Lecture Notes in Mathematics, 1142, Springer-Verlag, 50-87.

\bibitem{Shin} T. Shintani, On Dirichlet series whose coefficients
are class numbers of integral binary cubic forms, J. Math. Soc.
Japan 24 (1972), 132-188.

\bibitem{TT} T. Taniguchi and F. Thorne, Orbital $L$-functions for
the space of binary cubic forms, Canad. J. Math. 65 (2013),
1320-1383.








\end{thebibliography}

\section*{Acknowledgment}
The author would like to thank Frank Thorne for helpful comments
related to the topics of this paper.

\end{document}